\newtheorem{introth}{Theorem}
\newtheorem{introex}[introth]{Example}
\newtheorem{introdefinition}[introth]{Definition}
\newenvironment{introdfn}{\begin{introdefinition}\rm}{\end{introdefinition}}
\newtheorem{introremark}[introth]{Remark}
\newtheorem{thm}{Theorem}[section]
\newtheorem{prop}[thm]{Proposition}
\newtheorem{lem}[thm]{Lemma}
\newtheorem{cor}[thm]{Corollary}
\newtheorem{definition}[thm]{Definition} 
\newtheorem{remark}[thm]{Remark}
\newtheorem{observation}[thm]{Observation}
\newtheorem{example}[thm]{Example}
\newenvironment{dfn}{\begin{definition}\rm}{\end{definition}}
\newenvironment{rmk}{\begin{remark}\rm}{\end{remark}}
\newcommand{\act}{\curvearrowright}
\def\Aut{\mathop{\mathrm{Aut}}\nolimits}
\newcommand{\bracket}[1]{{\langle {#1} \rangle}}
\def\diag{\mathop{\mathrm{diag}}\nolimits}
\def\e{\chi_{\mathrm{top}}}
\def\Fix{\mathop{\mathrm{Fix}}\nolimits}
\def\GL{\mathop{\mathrm{GL}}\nolimits}
\def\id{\mathop{\mathrm{id}}\nolimits}
\def\ord{\mathop{\mathrm{ord}}\nolimits}
\def\tr{\mathop{\mathrm{tr}}\nolimits}
\def\Out{\mathop{\mathrm{Out}}\nolimits}
\def\splitext{\! :\!}
\def\C{\mathbb C} \def\Q{\mathbb Q} \def\R{\mathbb R} \def\Z{\mathbb Z}
\def\P{\mathbb P} \def\F{\mathbb F}
\newenvironment{(enumerate)}{
  \begin{enumerate}
  
  }{\end{enumerate}}
\newenvironment{anumerate}{
  \begin{enumerate}
  
  }{\end{enumerate}}
\numberwithin{equation}{section}
\begin{document} 
\title[Finite groups of automorphisms of Enriques surfaces]{Finite groups of automorphisms of Enriques surfaces and the Mathieu group  $M_{12}$}
\author[S. Mukai]{Shigeru Mukai}
\author[H. Ohashi]{Hisanori Ohashi}
\address{Research Institute for Mathematical Sciences,
Kyoto University,
Kyoto 606-8502,
JAPAN }
\email{mukai@kurims.kyoto-u.ac.jp}
\address{Department of Mathematics, 
Faculty of Science and Technology, 
Tokyo University of Science, 
2641 Yamazaki, Noda, 
Chiba 278-8510, JAPAN}
\email{ohashi@ma.noda.tus.ac.jp, ohashi.hisanori@gmail.com}
\thanks{Supported in part by 
the JSPS Grant-in-Aid for Scientific Research (B) 22340007, (S) 19104001, (S) 22224001, (A) 22244003, (S)25220701, for Exploratory Research 20654004 and for Young Scientists (B) 23740010.}

\subjclass[2000]{14J28, 20D08}
\begin{abstract}
An action of a group $G$ on an Enriques surface $S$ is called Mathieu if  it acts on $H^0(2K_S)$ trivially and
every element
of order  2, 4 has Lefschetz number  4.
A finite group $G$ has a Mathieu action on some Enriques surface if and only if it is isomorphic to a subgroup
of the symmetric group $\mathfrak S_6$ of degree 6 and the order  $|G|$  is not divisible by $2^4$.
Explicit Mathieu actions of the three groups $\mathfrak S_5, N_{72}$ and $\mathfrak A_6$,
together with non-Mathieu one of $H_{192}$, on polarized Enriques surfaces of degree 30, 18, 10 and 6, respectively, are constructed without Torelli type theorem to prove the ‘if’ part.
\end{abstract}
\date{\today} 
\maketitle

A (holomorphic) action of  a group on a $K3$ surface  $X$  is {\it symplectic} if it acts on $H^0(K_X) \simeq \C$  trivially.
The finite groups which can act symplectically on $K3$ surfaces  are classified in \cite{mukai88}, relating with the Mathieu group  $M_{23}$.
There are exactly eleven maximal groups
\begin{equation}\tag{$\ast$}
L_2(7),\ \mathfrak A_6,\ \mathfrak S_5,\ M_{20},\ F_{384},\ \mathfrak A_{4,4},\ T_{192},\ H_{192},\ N_{72},\ M_9,\ T_{48}
\end{equation}
among them.
In this article we give a similar classification for Enriques surfaces, relating with the symmetric group $\mathfrak S_6$ of degree 6 embedded in the Mathieu group  $M_{12}$.

A (minimal) Enriques surface $S$  is a smooth complete algebraic surface with  
$q=h^1(\mathcal O_S)=0,\ p_g=h^0(K_S)=0$
and  $2K_S \sim 0$.
Equivalently, $S$ is the quotient of a $K3$ surface  $X$  by a (fixed point) free involution  $\varepsilon$. 
An action of  a group on an Enriques surface  $S$  is {\it semi-symplectic}  if it acts on $H^0(2K_S) \simeq \C$  trivially.
If a finite automorphism $\sigma$  is semi-symplectic, then we have $\ord(\sigma) \le 6$
(Corollary \ref{atmost6}) and the Lefschetz number
$L(\sigma)$ takes the following values (Proposition \ref{chara}).
\begin{equation*}
\begin{tabular}{c||c|c|c|c|c|c}
$\mathrm{ord}(\sigma)$ &1&2&3&4&5&6\\\hline
$L(\sigma)$ &12& $-4, -2$, \ldots, 12&3&2, 4&2&1, 3
\end{tabular}
\end{equation*}
Although the Lefschetz number of a finite symplectic automorphism depends only on its order
for $K3$ surfaces, the above table disproves
the similar statement for semi-symplectic automorphisms of Enriques surfaces
(see also Example~\ref{H192}, Remark~\ref{Clebsch} and Remark~\ref{far from Mathieu}).
In this paper we study the most natural subcases from the viewpoint of the small Mathieu group $M_{12}$.
A classification of general semi-symplectic actions will be discussed elsewhere.

The Mathieu group $M_{12}$  is a finite simple group of sporadic type.
It acts on  $\Omega_+= \{1, \ldots, 12\}$  quintuply transitively and is of order $12\cdot 11\cdot 10\cdot 9\cdot 8=2^6\cdot3^3\cdot5\cdot11$.
The stabilizer subgroup of  $M_{12}$  at a point $\star \in \Omega_+$ is denoted by $M_{11}$.
The number of fixed points of the permutation action $M_{11} \act \Omega_+$  depends only on the order of an element and is given as follows.
\begin{equation}\tag{$\ast\ast$}
\begin{tabular}{c|cccccccc}
{\rm order} &1&2&3&4&5&6&8&11\\\hline
{\rm number of fixed points} &12&4&3&4&2&1&2&1
\end{tabular}
\end{equation}

After Table~($\ast\ast$), we make the following

\begin{introdfn}\label{Mathieu}
A semi-symplectic action of a group  $G$  on an Enriques surface  is {\it Mathieu} if the 
Lefschetz number of $g\in G$ depends only on $\ord (g)$ and coincides with 
the lower row in table ($\ast\ast$).
\end{introdfn}
For an action of $G$ to be Mathieu, 
it suffices for elements $g\in G$ of order $2$ or $4$ to have 
Lefschetz number $4$ by Lemma \ref{criterion}.
Mathieu actions are automatically effective by definition. Our main result
is as follows.
\begin{introth}\label{main}
For a finite group  $G$, the following two conditions are equivalent to each other.
\begin{(enumerate)}
\item $G$  has a Mathieu action on some Enriques surface.
\item $G$  can be embedded into the symmetric group $\mathfrak S_6$  and the order  $|G|$  is not divisible by $2^4$.
\end{(enumerate)}
\end{introth}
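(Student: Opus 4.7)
My plan is to treat the two implications separately. For (1) $\Rightarrow$ (2) I would leverage Mukai's classification of finite symplectic groups on K3 after lifting to the universal cover; for (2) $\Rightarrow$ (1) I would reduce to a small list of maximal subgroups of $\mathfrak S_6$ and aim for explicit geometric constructions.

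For (1) $\Rightarrow$ (2): given a Mathieu action of $G$ on $S = X/\bracket{\varepsilon}$, lift to the K3 cover to obtain a central extension
\[
1 \to \bracket{\varepsilon} \to \tilde G \to G \to 1
\]
inside $\Aut(X)$. By semi-symplecticity every element of $\tilde G$ acts on $H^0(K_X)$ by $\pm 1$, and since $\varepsilon$ is anti-symplectic the symplectic kernel $\tilde G_0$ has index exactly $2$ and meets $\bracket{\varepsilon}$ trivially; hence $\tilde G = G \times \bracket{\varepsilon}$ as a direct product, and $G$ itself acts symplectically on $X$. By Mukai's classification, $G$ embeds in one of the eleven maximal groups listed in $(\ast)$. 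To pin down the target, I would combine the cover relation
\[
2\,L(g\ \text{on}\ S) = L(g\ \text{on}\ X) + L(\varepsilon g\ \text{on}\ X)
\]
with the K3 table of symplectic Lefschetz numbers (which depend only on the order) and the Mathieu values in $(\ast\ast)$. This determines the Lefschetz number of both lifts of every $g \in G$, producing a character condition on $G$ that I expect to match exactly the permutation character of $\mathfrak S_6 \hookrightarrow M_{12}$. A case-by-case check against the eleven groups in $(\ast)$ then restricts the possibilities to subgroups of $\mathfrak A_6$, $\mathfrak S_5$, or $N_{72}$, all of which embed in $\mathfrak S_6$ with $2$-part at most $8$.

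For (2) $\Rightarrow$ (1): an elementary group-theoretic check shows that every $G \le \mathfrak S_6$ with $2^4 \nmid |G|$ is contained in one of the three maximal such subgroups $\mathfrak A_6$, $\mathfrak S_5$, or $N_{72} = \mathfrak S_3 \wr C_2$. It therefore suffices to construct Mathieu actions of these three groups on some Enriques surfaces; the general case follows by restriction. The paper realizes them explicitly on polarized Enriques surfaces of degrees $10$, $30$, and $18$ respectively, without appeal to a Torelli-type theorem. For each realization one verifies semi-symplecticity and, by Lemma \ref{criterion}, need only check that elements of order $2$ and $4$ have Lefschetz number $4$ — a fixed-point computation on the projective model.

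The main obstacle is the geometric-construction half: producing the three projective models, identifying their symmetry groups, and computing Lefschetz numbers directly is the technical heart of the paper and the reason a Torelli-free argument is delicate. On the classification side, the subtler step is extracting the sharp $2^4 \nmid |G|$ bound, since a priori $G$ could lie inside a Mukai group with Sylow $2$-subgroup of order $\ge 16$ (as with $H_{192}$, whose natural Enriques extension is non-Mathieu); one must rule out every such possibility using the specific Mathieu Lefschetz values in $(\ast\ast)$.
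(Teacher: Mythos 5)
There is a genuine gap, and it sits at the pivot of your (2)\,$\Rightarrow$\,(1) argument. The claim that every $G\le\mathfrak S_6$ with $2^4\nmid|G|$ is contained in $\mathfrak A_6$, $\mathfrak S_5$ or $N_{72}$ is false: all three of those groups have $2$-Sylow subgroup $D_8$, so none of them contains $C_2\times C_4$, $C_2^3$, or $C_2\times\mathfrak A_4$, yet all three of these embed in $\mathfrak S_6$ (inside $\mathfrak S_2\times\mathfrak S_4$, say) and have order prime to $2^4/\gcd$ considerations, i.e.\ satisfy condition (2). This is exactly why the paper's list of maximal groups (Theorem~\ref{main2}(3)) has \emph{five} entries, the extra two being $C_2\times\mathfrak A_4$ and $C_2\times C_4$; the introduction states explicitly that this is the reason $\mathfrak S_6$, rather than $M_{11}$, appears in the theorem. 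These two groups require their own construction: the paper realizes them (Section~\ref{3->1}) as subactions of the non-Mathieu $H_{192}$-action on the $(2,2,2)$-surface in $\P^1\times\P^1\times\P^1$ of Example~\ref{H192}, checking Lefschetz numbers of the order $2$ and $4$ elements by hand. Without this your proof of (2)\,$\Rightarrow$\,(1) simply does not cover $G=C_2\times\mathfrak A_4$, $C_2^3$ or $C_2\times C_4$. The same error infects your (1)\,$\Rightarrow$\,(2) direction: a case analysis that concluded ``only subgroups of $\mathfrak A_6$, $\mathfrak S_5$, $N_{72}$'' would contradict the existence of the Mathieu $C_2\times\mathfrak A_4$-action, so your two halves are mutually inconsistent.

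A secondary but real problem with (1)\,$\Rightarrow$\,(2): the character condition you propose to extract cannot by itself yield the sharp statement. The group $Q_{12}$ admits a genuine small Mathieu representation with $\dim V^{Q_{12}}=4$ (Remark~\ref{Q12}), so it passes every Lefschetz-number and integrality test, and $Q_8$ and $Q_{12}$ both act symplectically on $K3$ surfaces; the paper excludes them by geometric arguments on fixed points (Lemmas~\ref{Q8} and~\ref{12}), using that the four isolated fixed points of the central involution must be fixed by the whole group and examining the local linearization at an anti-symplectic fixed point. Your cover relation $2L(g|_S)=L(g|_X)+L(\varepsilon g|_X)$ is correct, but $L(\varepsilon g|_X)$ is not determined by $\ord(g)$ alone (the lift $\varepsilon g$ is anti-symplectic), so it does not reduce to a purely group-theoretic condition. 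The paper instead works directly with $H^*(S,\Q)$ as a small Mathieu representation, bounds $|G|$ via integrality of $\mu(H)$ over subgroups $H$ (Proposition~\ref{orders}, Lemma~\ref{2gp}), and then classifies all $25$ admissible groups case by case (Propositions~\ref{nonsolvable}, \ref{nilpotent}, \ref{solvable}) rather than routing through the eleven maximal groups of $(\ast)$.
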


The following examples are the key of the proof of  (2) $\Rightarrow$ (1).

\begin{introex}\label{Ohashi}
Let  $X$ be the minimal resolution of the complete intersection
$$\sum_{i<j} x_ix_j = \sum_{i<j} \frac1{x_ix_j} =0$$  
in  $\P^4$ and  $S$ its quotient by the involution induced by 
the Cremona transformation $\varepsilon: (x_i) \mapsto (1/x_i)$.
Then the natural action of  $\mathfrak S_5$, the third group of ($\ast$),  on  $S$ is (semi-symplectic and) Mathieu (\S\S\ref{subsection:S5}). (In fact, this Enriques surface 
is isomorphic to the surface of type VII in \cite{kondo86}. See Remark \ref{kondo7}.) 
\end{introex}

\begin{introex}\label{Mukai}
Let $S=X/\varepsilon$ be the quotient of the complete intersection
$$X: x_i^2 - (1+\sqrt{3})x_{i-1}x_{i+1} = y_i^2 - (1-\sqrt{3})y_{i-1}y_{i+1}, \quad i =0,1,2\in \Z/3$$
in $\P^5$ by the involution $\varepsilon: (x: y) \mapsto (x: -y)$.
A subgroup $C_3^2\splitext C_4$ of the Hessian group $G_{216}$ 
acts on  $X$  linearly and also on $S$.
This action extends to that of $N_{72}$, the ninth of ($\ast$).
The extended action is Mathieu (\S\S\ref{subsection:N72}).
\end{introex}

The rational functions
$F_{00}=(y_0-y_1)/(x_0-x_1)$
and its square define elliptic fibrations  $X \to \P^1$   and $S \to \P^1$   of the $K3$ and Enriques surface in the above example, respectively.
The elliptic fibration $S \to \P^1$ has four singular fibers of type $I_3$.
Hence its Jacobian fibration is induced by the Hesse pencil and $S$ has a semi-symplectic action of  $C_3^2$, the Mordell-Weil group.  

\begin{introex}\label{KOZex}
The action of $C_3^2\splitext C_4$  in Example~\ref{Mukai} and the above $C_3^2$ generate a Mathieu action of $\mathfrak{A}_6$,  the second of ($\ast$), on the Enriques surface $S$  (\S\S\ref{subsection:A6}).
\end{introex}


The eighth group  $H_{192}$ of ($\ast$), not a subgroup of  $M_{11}$, has the following action.

\begin{introex}\label{H192}
The surface
\begin{equation*}
X : v^2w^2 + u^2w^2 + u^2v^2 +1 + \sqrt{-1}(u^2 + v^2 + w^2 + u^2v^2w^2) = 0 
\end{equation*}
of tri-degree $(2,2,2)$  in  $\P^1 \times \P^1 \times \P^1$  is a smooth $K3$ surface with a symplectic action of the group $H_{192}$,
where $u, v, w$  are the inhomogeneous coordinates of three projective lines.
The involution $\varepsilon: (u,v,w) \mapsto (-u,-v,-w)$  is free, commutes with the action and hence the quotient Enriques surface  $S = X/\varepsilon$  has a semi-symplectic action of $H_{192}$.
\end{introex}
The involutions induced by
$$(u, v,w) \mapsto (u, -v, -w) \quad  {\rm and} \quad (u, v,w) \mapsto (-\sqrt{-1}/u, -\sqrt{-1}/v,-\sqrt{-1}/w)$$
are Mathieu,
but  $(u, v,w) \mapsto (u, w,v)$  is not.
 (In fact the Lefschetz number equals  2 on $S$.)
%
Thus the action of Example~\ref{H192} is not Mathieu.
But we can find two Mathieu sub-actions by  $C_2 \times \mathfrak A_4$ and $C_2\times C_4$,
see \cite{Fields} and also Section \ref{3->1}.
Though neither is a subgroup of  $M_{11}$, both are subgroups of  $\mathfrak S_6$,
which is why the group $\mathfrak{S}_6$ appears in Theorem \ref{main}. 

\begin{introth}\label{main2}
The two conditions in Theorem~\ref{main} are equivalent to the following:
\begin{(enumerate)}
\addtocounter{enumi}{2}
\item $G$  is a subgroup of one of the five maximal groups $\mathfrak A_6, \mathfrak S_5, N_{72}, C_2 \times \mathfrak A_4$ and $C_2\times C_4$.
\item $G$  has a small Mathieu representation (Definition \ref{Maction} (1)) with  $\dim V^G$ $\ge 3$,
its $2$-Sylow subgroup is embeddable into  $\mathfrak S_6$ and  $G \not\simeq Q_{12}$, the generalized quaternion group of order $12$.
\end{(enumerate)}
\end{introth}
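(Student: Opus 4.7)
The plan is to extend the equivalence $(1) \Leftrightarrow (2)$ from Theorem \ref{main} by adjoining (3) and (4), via the chain $(3) \Leftrightarrow (2) \Leftrightarrow (4)$.

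For $(2) \Leftrightarrow (3)$ the argument is purely group-theoretic. For $(3) \Rightarrow (2)$ I would exhibit explicit embeddings of the five listed groups into $\mathfrak{S}_6$: $\mathfrak{A}_6$ and $\mathfrak{S}_5$ are standard subgroups, $N_{72} \simeq 3^2 \splitext D_8$ sits as the normalizer of a non-transitive Sylow $3$-subgroup, and $C_2 \times \mathfrak{A}_4$ and $C_2 \times C_4$ embed via the block decomposition $\{1,\ldots,6\} = \{1,2\} \sqcup \{3,4,5,6\}$. The orders $360, 120, 72, 24, 8$ are each not divisible by $2^4 = 16$, a property inherited by subgroups. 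For $(2) \Rightarrow (3)$, since the Sylow $2$-subgroup of $\mathfrak{S}_6$ has order $16$, the hypothesis $2^4 \nmid |G|$ forces the $2$-part of $|G|$ to be at most $8$; enumerating the maximal subgroups of $\mathfrak{S}_6$ and restricting to those subgroups compatible with this bound, one checks that every such $G$ lies, up to conjugacy, in one of the five listed groups.

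For $(2) \Leftrightarrow (4)$: a Mathieu action of $G$ on an Enriques surface $S$ makes the total cohomology $V := H^*(S,\C)$ into a $G$-module whose character agrees with the Lefschetz numbers of table $(\ast\ast)$, i.e., a small Mathieu representation in the sense of Definition \ref{Maction}. The summands $H^0(S) \oplus H^4(S)$ together with any $G$-invariant ample class in $H^{1,1}(S)$ give $\dim V^G \geq 3$; the $2$-Sylow condition is inherited from (2); and $G \not\simeq Q_{12}$ is automatic since by (2) $G$ embeds in $\mathfrak{S}_6$, whereas a short check shows the normalizer of any $C_6 \leq \mathfrak{S}_6$ is dihedral of order $12$ and contains no element of order $4$. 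These together yield $(2) \Rightarrow (4)$. For the converse, one starts from the character data of a small Mathieu representation with $\dim V^G \geq 3$, applies standard averaging identities to constrain $G$ to a short list of candidates, and then combines the $2$-Sylow embeddability with the explicit exclusion of $Q_{12}$ to identify the resulting list with subgroups of $\mathfrak{S}_6$ having $2$-part at most $8$, i.e., condition (2).

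The main obstacle is the converse $(4) \Rightarrow (2)$. Unlike the $K3$ setting of \cite{mukai88}, where character-theoretic Mathieu conditions cleanly carved out the eleven maximal groups $(\ast)$, for Enriques surfaces the dicyclic group $Q_{12}$ satisfies the character tests but fails to embed in $\mathfrak{S}_6$, forcing its explicit exclusion in the statement. Verifying that $Q_{12}$ is the only such coincidence, and that the resulting character-theoretic classification matches (2) exactly, is the delicate step and would involve a careful case analysis across small finite groups with Mathieu-type characters.
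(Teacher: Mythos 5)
Your overall architecture (adjoin (3) and (4) to Theorem~\ref{main} via $(3)\Leftrightarrow(2)\Leftrightarrow(4)$, with $(2)\Leftrightarrow(3)$ purely group-theoretic and $(2)\Rightarrow(4)$ obtained by running the Mathieu action supplied by Theorem~\ref{main} on $H^*(S,\Q)$) is sound, and your treatments of $(3)\Rightarrow(2)$, $(2)\Rightarrow(3)$ and $(2)\Rightarrow(4)$ are in substance what the paper does in Subsections~\ref{1to4} and~\ref{2to3}. The genuine gap is $(4)\Rightarrow(2)$: you state that ``standard averaging identities'' constrain $G$ to a short list and that the rest is ``a careful case analysis,'' but you supply neither the identities nor the analysis, and this is where almost all of the work lies. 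Concretely, the paper first proves (Proposition~\ref{orders}) that any group with a small Mathieu representation has order $2^{a_2}3^{a_3}5^{a_5}11^{a_{11}}$ with $a_2\le 4$, $a_3\le 2$, $a_5,a_{11}\le 1$; the mechanism is that $\mu(H)=\dim V^H$ must be a non-negative integer for every subgroup $H$, applied to Sylow subgroups and, for the $2$-part, to a maximal normal abelian subgroup $A$ together with the injection $G_2/A\hookrightarrow\Aut(A)$ and coset-by-coset computations of $\mu$. It then (Subsection~\ref{4to2}) uses $\dim V^G\ge3$ to kill the prime $11$, uses the $\mathfrak{S}_6$-embeddability of $G_2$ to force $|G_2|\le 8$ and to exclude elements of order $8$, and classifies all groups satisfying (4) into an explicit list of $25$ by splitting into non-solvable (via composition factors, yielding $\mathfrak A_5,\mathfrak S_5,\mathfrak A_6$), nilpotent, and non-nilpotent solvable cases, the last via the Fitting subgroup $F$ and the injection $G/F\hookrightarrow\Out(F)$. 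None of these ideas appear in your sketch, and without them the implication is unproved.

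A secondary point: your remark that $Q_{12}$ is ``the only such coincidence'' is not something you verify; it is exactly the output of the case analysis above (cf.\ Remark~\ref{Q12}: $Q_{12}$ does admit a small Mathieu representation with $\dim V^{Q_{12}}=4$ and its $2$-Sylow subgroup embeds in $\mathfrak S_6$, which is why it must be excluded by hand in condition (4)). Your observation that $Q_{12}$ does not embed in $\mathfrak S_6$ correctly handles the direction $(2)\Rightarrow(4)$, but contributes nothing toward $(4)\Rightarrow(2)$.
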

Finally using these results in characteristic zero, we extend our classification
to tame Mathieu actions in positive characteristic.  

\begin{introth}\label{main3}
Let $k$ be an algebraically closed field of positive characteristic $p>0$. 
For a finite group  $G$ with  $(|G|, p)=1$, the two conditions in Theorem~\ref{main} are equivalent, that is,
$G$  has a Mathieu action on some Enriques surface over $k$ if and only if the condition (2) holds.
\end{introth}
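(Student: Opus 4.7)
The plan is to reduce Theorem~\ref{main3} to its characteristic-zero counterpart Theorem~\ref{main} via equivariant lifting of a tame Mathieu pair to the Witt ring $W(k)$ in one direction, and via reduction of the explicit characteristic-zero models modulo $p$ in the other.

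For $(1)\Rightarrow(2)$ in positive characteristic, start with a Mathieu pair $(S,G)$ over $k$. Because $(|G|,p)=1$, Reynolds averaging identifies the $G$-equivariant obstruction space with $H^2(S,T_S)^G$; since the local deformation functor of an Enriques surface is unobstructed (in characteristic $\ne 2$, or in our tame setting quite generally), this yields a formal $G$-equivariant lift of $S$ over $W(k)$. A $G$-invariant ample line bundle, produced by averaging any ample class, algebraizes the formal family to a smooth projective $G$-scheme $\mathcal{S}\to\operatorname{Spec}W(k)$ whose geometric generic fiber $S_{\bar{K}}$ is an Enriques surface in characteristic zero carrying the $G$-action. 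Since $\ell$-adic cohomology ($\ell\ne p$) with its $G$-module structure is preserved by smooth proper specialization, the Lefschetz number of each $g\in G$ on $S_{\bar{K}}$ equals that on $S$, so the lifted action is again Mathieu. Theorem~\ref{main} then forces condition~$(2)$.

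For $(2)\Rightarrow(1)$, fix $G$ with condition $(2)$ and a prime $p\nmid|G|$. Embed $G$ into one of the five maximal groups $H$ listed in Theorem~\ref{main2}. The explicit equations in Examples~\ref{Ohashi}, \ref{Mukai}, \ref{KOZex} and~\ref{H192} (together with the Mathieu sub-actions of $C_2\times\mathfrak{A}_4$ and $C_2\times C_4$) define flat projective $H$-schemes over $\operatorname{Spec}R[1/N]$ for a suitable number ring $R$ (such as $\mathbb{Z}[\sqrt{3}]$ or $\mathbb{Z}[\sqrt{-1}]$) and integer $N$. When $p\nmid N$, reduction modulo a prime of $R$ above $p$ yields a smooth Enriques surface over $k$ with a Mathieu $H$-action; restricting to $G$ gives the desired action.

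The main obstacle is the finite set of primes $p\nmid|G|$ that nevertheless divide $N$ in the chosen model. For such sporadic bad primes, one must either Galois-twist the defining equations to obtain another integral model with good reduction at $p$, or apply the lifting argument of $(1)\Rightarrow(2)$ in reverse: construct a $G$-equivariant smooth lift of the target Mathieu pair over $W(k)$ directly from equivariant deformation theory and specialize the lift to the closed fiber. The latter route works uniformly once one verifies unobstructedness of the tame equivariant Enriques deformation functor, and it disposes in particular of characteristic $2$: the hypothesis $(|G|,2)=1$ forces $|G|$ odd, reducing the classification to $G\in\{1,C_3,C_5,C_3^2\}$, which is small enough to handle by direct construction on an appropriate classical Enriques surface.
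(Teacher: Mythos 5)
Your overall strategy (lift to characteristic zero for necessity, reduce explicit models modulo $p$ for sufficiency) is the same as the paper's, which invokes Serre's equivariant lifting theorem for the first direction and reduction of Examples~\ref{Ohashi}, \ref{Mukai}, \ref{KOZex}, \ref{H192} for the second. But the two places where you defer the work are exactly the places where the paper has to do something nontrivial, and your proposed fixes do not close them. First, for $(2)\Rightarrow(1)$ at the bad primes: the degenerations are geometric, not arithmetic. At $p=5$ the Enriques involution of Example~\ref{Ohashi} acquires the fixed point $(1:1:1:1:1)$, and at $p=3$ the surface \eqref{Hesse-Godeaux} with $\lambda,\mu=1\pm\sqrt3$ becomes reducible; no Galois twist of the equations repairs this. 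Your alternative --- ``construct a $G$-equivariant smooth lift over $W(k)$ and specialize to the closed fiber'' --- is circular: deformation theory lifts a given special fiber, it does not manufacture one, and producing a $W(k)$-model with good reduction is precisely the problem being dodged. The paper's actual resolution is a case check you omit: it determines the exact bad primes of each model and verifies group-theoretically that every $G$ with $p\nmid|G|$ satisfying (2) embeds in a maximal group whose model has good reduction at $p$ (e.g.\ at $p=5$ the relevant groups sit inside $N_{72}$, $\mathfrak{S}_4\subset\mathfrak{A}_6$, $C_2\times\mathfrak{A}_4$, $C_2\times C_4$, all realized by the surviving examples). In characteristic $2$ all four models degenerate, and the paper's constructions of $C_3^2$ and $C_5$ actions are genuinely new: the involution $\varepsilon$ is replaced by an action of the non-reduced group scheme $\mu_2$ on a $12$-nodal model of \eqref{Hesse-Godeaux}, and the $C_5$-action lives on a \emph{non-classical} Enriques surface obtained from \eqref{VI}; ``direct construction on an appropriate classical Enriques surface'' is not a proof.

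Second, your $(1)\Rightarrow(2)$ argument breaks in characteristic $2$. The unobstructedness you assert ``in our tame setting quite generally'' fails there: for non-classical Enriques surfaces $H^2(S,\mathcal{O}_S)\neq 0$, and the hypotheses $H^2(S,\mathcal{O}_S)=H^2(S,\Theta_S)=0$ of Serre's theorem are, as the paper notes, often invalid in characteristic $2$, so no equivariant lift over $W(k)$ is available by this route. The paper instead argues directly on the quotient $Y=S/\sigma$: tameness makes the images of fixed points rational double points, semi-symplecticity gives $K_{\tilde Y}\equiv 0$, and Noether's formula combined with the Euler number count $12=q(c_2(\tilde Y)-qr)+r$ pins down the possible orders to $3$ and $5$ (Lemmas~\ref{no3,5} and \ref{no9}), after which the purely group-theoretic classification of Subsection~\ref{4to2} applies. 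Without either this argument or a substitute, your proof does not cover the case $p=2$, $|G|$ odd. In odd characteristic your sketch of the lifting step is essentially a restatement of Serre's theorem and is fine, granted the (minor, but worth stating) check that semi-symplecticity and the Lefschetz numbers are preserved under specialization.
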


%
%
%
The construction of the paper is as follows. 
We prove $(3) \Rightarrow (1)$ of Theorems~\ref{main} and \ref{main2} for the 
three groups $\mathfrak{S}_5$, $N_{72}$, $\mathfrak{A}_6$ in a refined form (Theorem~\ref{veryMathieu}) in Sections \ref{examples} and \ref{section:Hesse-Godeaux}, where we intensively study 
the surfaces in Examples \ref{Ohashi}, \ref{Mukai}, \ref{KOZex}.
Mathieu actions for the other groups $C_2\times \mathfrak{A}_4$ and $C_2\times C_4$
are constructed in Section \ref{3->1}. 
They are nothing but the actions studied in detail in \cite{Fields}, but here we give a slightly different treatment.

We give a preliminary study of semi-symplectic and Mathieu automorphisms in
Section \ref{preliminary}. 
In Section \ref{sMrep} we study groups with small Mathieu representations.
Finally in Section \ref{equivalence} we prove the other implications of Theorem~\ref{main} and  Theorem~\ref{main2} and complete the proofs.
Especially in Subsection \ref{4to2} we classify all finite groups satisfying the equivalent conditions of main theorems. 
In Section \ref{pc}, we 
prove Theorem~\ref{main3}.

In Appendix \ref{ltc}, an Enriques analogue of \cite[Appendix]{mukai98}, from which this article stems, is presented to give an alternative proof of Theorem~\ref{veryMathieu}.
In Appendix \ref{KOZconstruction}, yet another lattice-theoretic construction of an
$\mathfrak{A}_6$ action using the result of \cite{KOZ1, KOZ2} is presented. 

\medskip
\noindent{\bf Notation and conventions.}

Algebraic varieties $X$ are considered over the complex number field $\C$, 
except Theorem \ref{main3} and Section \ref{pc} where it is over an algebraically closed field $k$ of positive characteristic.

For a smooth variety $X$, $K_X$ denotes the canonical divisor class.

Notation of finite groups follows \cite{mukai88}. In particular, 
$C_n$, $D_{2n}$, $Q_{4n}$, $\mathfrak S_n$, $\mathfrak A_n$ denotes 
the cyclic group of order $n$, the dihedral group of order $2n$, 
the generalized quaternion group of order $4n$, the symmetric  group of degree $n$, the alternating group of degree $n$ respectively. The definition of some other groups will be 
recalled when it is necessary.
For groups $A$ and $B$, $A\splitext B$ denotes a split extension with normal subgroup $A$. 

The Mathieu group $M_{24}$ acts on the operator domain $\Omega$ consisting of $24$ points
and the small Mathieu group $M_{12}$ is the stabilizer of a dodecad, denoted by $\Omega_+$.

The symbol $U$ denotes the rank $2$ lattice given by the symmetric matrix $\begin{pmatrix} 0&1 \\ 1&0 \end{pmatrix}$. Root lattices $A_n, D_n$ and $E_n$ are considered 
negative definite.
The lattice obtained from a lattice $L$ by replacing the bilinear form $(\ .\ )$ with $r(\ .\ )$, $r$ being a rational number, is denoted by $L(r)$.

\section{The first example of Mathieu action}\label{examples}
We recall that the symmetric group  $\mathfrak S_6$  is a subgroup of  $M_{24}$  and decomposes the operator domain  $\Omega$  into four orbits  $\Omega_2, \Omega_{10}, \Omega_6, \Omega_6'$ of length  $2, 10, 6$ and 6 (\cite{Co}).
Two permutation representations on the orbits $\Omega_6$  and $\Omega_6'$ of length 6 differ by the nontrivial outer automorphism of  $\mathfrak S_6$. 
The union $\Omega_6 \cup \Omega_6'$ is an umbral dodecad, 
and so is its complement $\Omega_2 \cup \Omega_{10}$.
Thus $\mathfrak S_6$ is embedded into $M_{12}$ in two different ways.

Among the eleven groups  ($\ast$), three groups $\mathfrak S_5, N_{72}, \mathfrak A_6$  are subgroups of  $\mathfrak S_6$.
(The second one, $N_{72}\simeq C_3^2\splitext D_8$, is the normalizer of a $3$-Sylow subgroup.)
By the embedding  $\mathfrak S_6 \hookrightarrow M_{24}$ above mentioned, they are also subgroups of $M_{11}$.
More precisely we have the following:

\begin{lem}\label{embM11}
Each of three groups $\mathfrak S_5, N_{72}, \mathfrak A_6$ is embedded into  $M_{11}$  so that it decomposes  the operator domain $\Omega_+\setminus \{\star\}$ into two orbits. 
The orbit length $\{a, b\}$  are $\{5,6\}$, $\{2,9\}$, $\{1,10\}$, respectively.
\end{lem}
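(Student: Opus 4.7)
The plan is to realize each of the three subgroups as the stabilizer inside $\mathfrak{S}_6 \subset M_{12}$ of a suitably chosen point $\star \in \Omega_+$, so that the subgroup automatically lies in $M_{11}$ and the orbit decomposition on $\Omega_+ \setminus \{\star\}$ can be read off directly from the $\mathfrak{S}_6$-action. The two inequivalent embeddings $\mathfrak{S}_6 \hookrightarrow M_{12}$ recorded above the lemma --- one with $\Omega_+ = \Omega_2 \cup \Omega_{10}$, the other with $\Omega_+ = \Omega_6 \cup \Omega_6'$ --- will be used according to which one puts a fixed point in the right orbit.

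For $\mathfrak{A}_6$, I would use $\Omega_+ = \Omega_2 \cup \Omega_{10}$. The $\mathfrak{S}_6$-action on the two-point orbit $\Omega_2$ factors through the sign character, so $\mathfrak{A}_6$ fixes both points of $\Omega_2$; taking $\star$ to be one of them leaves a singleton orbit behind, and $\mathfrak{A}_6$ acts transitively on $\Omega_{10}$ because the product $\mathfrak{A}_6 \cdot N_{72}$ already exhausts $\mathfrak{S}_6$ (here $N_{72}$ is the point stabilizer in $\mathfrak{S}_6 \curvearrowright \Omega_{10}$, which contains odd permutations). This gives orbit lengths $\{1,10\}$. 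For $\mathfrak{S}_5$, I switch to $\Omega_+ = \Omega_6 \cup \Omega_6'$ and take $\mathfrak{S}_5$ to be the stabilizer in $\mathfrak{S}_6$ of a point $\star \in \Omega_6$. Its action on the remaining five points of $\Omega_6$ is the natural one; on the twin orbit $\Omega_6'$ the same subgroup appears as the image of the natural point stabilizer under the outer automorphism of $\mathfrak{S}_6$, i.e.\ as the transitive copy of $\mathfrak{S}_5 \subset \mathfrak{S}_6$, so it acts transitively on $\Omega_6'$, giving orbit lengths $\{5,6\}$.

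For $N_{72}$, the subtlest of the three cases, I would again use $\Omega_+ = \Omega_2 \cup \Omega_{10}$ and identify $\Omega_{10}$ with the ten unordered partitions of $\{1,\ldots,6\}$ into two triples. Under this identification the stabilizer of $P_0 = \{\{1,2,3\},\{4,5,6\}\}$ is precisely $\mathfrak{S}_3 \wr \mathfrak{S}_2 = N_{72}$, so setting $\star = P_0$ places $N_{72}$ in $M_{11}$. Each of the other nine partitions is of mixed type, specified by a choice of one element of $\{1,2,3\}$ and one element of $\{4,5,6\}$ to swap between the two blocks; the subgroup $\mathfrak{S}_3 \times \mathfrak{S}_3 \subset N_{72}$ already acts transitively on these nine choices, producing a single orbit of length $9$. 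Finally, $N_{72}$ contains odd permutations and therefore acts nontrivially --- hence transitively --- on the two points of $\Omega_2$, supplying the orbit of length $2$.

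The only step requiring genuine work is the identification of $\Omega_{10}$ with the partition model in the $N_{72}$ case, which I expect to be the main obstacle; once that is in place the orbit counts are immediate. This identification can be carried out either by direct inspection of the orbit decomposition $\Omega = \Omega_2 \sqcup \Omega_{10} \sqcup \Omega_6 \sqcup \Omega_6'$ inside $M_{24}$, or more abstractly by noting that every transitive $\mathfrak{S}_6$-action on ten letters must have a point stabilizer of order $72$ and hence be equivalent to the action on unordered partitions into two triples.
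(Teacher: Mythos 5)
Your proposal is correct and follows exactly the route the paper takes: the paper's entire justification is the one-line remark after the lemma that the three groups are the stabilizers of $\mathfrak S_6\act\Omega_i$ for $i=6,10,2$, and you have simply spelled out the orbit computations (transitivity on the complementary $\mathfrak S_6$-orbit via $H\cdot\mathrm{Stab}=\mathfrak S_6$, the partition model for $\Omega_{10}$, and the sign character on $\Omega_2$) that the paper leaves implicit. No discrepancies.
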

In fact, the three groups are isomorphic to the stabilizer of  $\mathfrak S_6 \act \Omega_i$ with  $i=6, 10, 2$, respectively.


As is well-known, the free part $H^2(S, \Z)_f \simeq \Z^{10}$  of the second cohomology group of an Enriques surface  $S$, equipped with the cup product,  is isomorphic to 
the lattice associated with the diagram  $T_{2,3,7}$ (Figure \ref{T237}).
\begin{figure}
\centering
\begin{picture}(160,55)
\linethickness{0.7pt}
\put(40,0){\circle*{6}}
\multiput(0,20)(20,0){9}{\circle*{6}}
\put(0,20){\line(1,0){160}}
\put(40,0){\line(0,1){20}}
\put(-3,25){$r_1$}
\put(17,25){$r_2$}
\put(37,25){$r_3$}
\put(57,25){$r_4$}
\put(77,25){$r_5$}
\put(97,25){$r_6$}
\put(117,25){$r_7$}
\put(137,25){$r_8$}
\put(157,25){$r_9$}
\put(44,-4){$r_0\leftrightarrow\mathfrak A_6$}
\put(17,36){$\updownarrow$}
\put(15,48){$N_{72}$}
\put(57,36){$\updownarrow$}
\put(55,48){$\mathfrak S_5$}
\end{picture}
\caption{The diagram $T_{2,3,7}$}
\label{T237}
\end{figure}
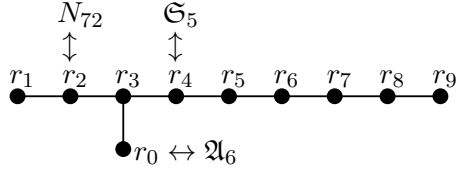
The Weyl group of $T_{2,3,7}$ contains $\mathfrak S_5 \times \mathfrak S_6, \mathfrak S_2 \times \mathfrak S_9$ and $\mathfrak S_{10}$ as Weyl subgroups. 
(The three subgroups become visible by removing the 
corresponding vertices shown in Figure \ref{T237}.)
Hence, via $M_{11}$ and the Weyl groups, 
each of the three groups $G = \mathfrak S_5, N_{72}, \mathfrak A_6$ acts isometrically on $H^2(S, \Z)_f$.
The invariant part $H^2(S, \Z)_f^G$ is generated by an element of square length $ab$ and its orthogonal complement is isomorphic to the root lattice $A_{a-1} \oplus A_{b-1}$.
In this and next sections, 
we construct a semi-symplectic action of  $G$ on an Enriques surface $S$ which is not only Mathieu but also realizes the above $G$-action on $H^2(S,\Z)_f$.
Note that, since these groups are generated by involutions, the action is 
automatically semi-symplectic by Proposition \ref{2356}.

\begin{thm}\label{veryMathieu}
The cohomological action $G \act H^2(S, \Z)_f$ of three groups in Examples~\ref{Ohashi}, \ref{Mukai} and \ref{KOZex} are $G$-equivariantly isomorphic to the one described above.
In particular, the actions $G \act S$ in Examples~\ref{Ohashi}, \ref{Mukai} and 
\ref{KOZex} are Mathieu, proving  $(3) \Rightarrow (1)$ of Theorems~\ref{main} and \ref{main2}.
Furthermore, the $G$-invariant (primitive) polarization is unique (up to numerical equivalence) 
and is of degree $ab$.
\end{thm}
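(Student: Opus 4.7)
The plan is to extract the $G$-equivariant lattice structure on $H^2(S,\Z)_f$ from the explicit projective geometry of each example and match it against the abstract model described before the theorem (the stabilizer of the appropriate vertex $r_i$ in the Weyl group $W(T_{2,3,7})$). Since each $G$ is generated by involutions, semi-symplecticity is automatic by Proposition~\ref{2356}, so the cohomological task reduces to: (i) exhibiting a $G$-invariant primitive class $L\in H^2(S,\Z)_f$ with $L^2=ab$; and (ii) producing two $G$-invariant $(-2)$-orbits in $L^\perp$, of lengths $a$ and $b$, whose intersection matrices are precisely $A_{a-1}$ and $A_{b-1}$. Once both are established, $\Z L\oplus A_{a-1}\oplus A_{b-1}$ sits as a full-rank $G$-sublattice of $H^2(S,\Z)_f$, and a discriminant check against the $T_{2,3,7}$-lattice forces the $G$-equivariant embedding to agree with the abstract one up to isometry.

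The polarizations in (i) are read off the constructions themselves: for Example~\ref{Ohashi}, the $\mathfrak{S}_5$-symmetric hyperplane class of $X\subset\P^4$, descended to $S$ after symmetrization by $\varepsilon$, produces a degree-$30$ class; for Example~\ref{Mukai}, the $N_{72}$-equivariant hyperplane in $\P^5$ gives a degree-$18$ class; and for Example~\ref{KOZex}, since the degree-$18$ class is not $\mathfrak{A}_6$-invariant, $L$ must be built from $\mathfrak{A}_6$-invariant combinations of the elliptic pencil $F_{00}^2\colon S\to\P^1$ together with its Mordell--Weil translates, yielding a degree-$10$ class. The $(-2)$-orbits in (ii) come from standard geometric sources: exceptional curves over resolved singularities, fiber components of the Hesse-type elliptic fibration (which naturally give an orbit of length $9$, hence $A_8$), and $\mathfrak{A}_6$-orbits of Mordell--Weil sections (or of curves over the fixed locus of $\varepsilon$) producing the length-$10$ orbit spanning $A_9$.

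Granted (i) and (ii), the Mathieu conclusion follows by a pure character computation: $H^2(S,\Q)\simeq\mathbf{1}\oplus(A_{a-1}\otimes\Q)\oplus(A_{b-1}\otimes\Q)$ as $\Q G$-modules, hence for every $g\in G$
\[
L(g)=2+\tr(g\mid H^2)=\#\Fix\bigl(g\act\{\star\}\sqcup\Omega_a\sqcup\Omega_b\bigr)=\#\Fix(g\act\Omega_+),
\]
which matches table~($\ast\ast$)---in fact yielding the full table, not merely the $L=4$ conditions of Lemma~\ref{criterion}. Uniqueness and degree of the $G$-invariant primitive polarization then follow at once from $H^2(S,\Z)_f^G=\Z L$ with $L^2=ab$.

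The main obstacle will be step (ii): concretely exhibiting the $(-2)$-orbits and verifying both orthogonality to $L$ and that their span is the root lattice itself, not merely a proper finite-index sublattice. This requires careful case-by-case geometric analysis---tracing the singular locus of $e_2(x)=e_3(x)=0\subset\P^4$ under the combined $\mathfrak{S}_5$-symmetry and Cremona involution in Example~\ref{Ohashi}, dissecting the singular fibers of the Hesse pencil in Example~\ref{Mukai}, and in Example~\ref{KOZex} checking that merging the $N_{72}$- and Mordell--Weil $C_3^2$-actions produces a genuine length-$10$ orbit whose span fills out the full $A_9$.
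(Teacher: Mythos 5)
Your skeleton is the paper's: show the $G$-invariant part of $H^2(S,\Z)_f$ is generated by a primitive class $L$ with $(L^2)=ab$, show $L^\perp\simeq A_{a-1}\oplus A_{b-1}$ with $G$ acting through the permutation actions on the two orbits of lengths $a$ and $b$, and deduce the Lefschetz numbers from the character identity $L(g)=\#\Fix(g\act\Omega_+)$. That final character computation is correct (and is a cleaner way to finish than the paper's implicit appeal to the same fact), the invariant degrees $30,18,10$ are the right ones, and the appeal to Proposition~\ref{2356} for semi-symplecticity matches the paper.

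The gap is in step (ii), and it is not merely unfinished bookkeeping: the classes you propose would not span the root lattices. In none of the three cases is $A_{a-1}\oplus A_{b-1}$ generated by a $G$-orbit of $(-2)$-curves. In Example~\ref{Ohashi} the five curves from the nodes satisfy $(r_i,r_j)=2$ for $i\neq j$, so the $A_4$ factor is spanned by the \emph{half}-differences $(r_i-r_{i+1})/2$, whose integrality must be proved (the paper does this via the fifteen lines $l_\sigma$, e.g.\ $r_1+r_2\equiv 2(l_{(12)(34)}+l_{(12)(35)}+l_{(12)(45)})$); the $A_5$ factor is spanned by differences of the six \emph{isotropic} gonality half-pencils of the $I_5+I_5$ fibrations, not by fiber components, and the relation $\sum_i r_i=\sum_j f_j$ producing the invariant class requires first pinning down the index-two overlattice generated by the $20$ curves. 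For $N_{72}$ the length-$9$ orbit is the set of half-pencils $f'_{kl}$ (the $h$-conics form an orbit of length $12$), and one needs the non-linear adjoint-matrix involution $\delta$ with $\delta^*h=2h-f_\infty$ even to produce the invariant degree-$18$ class. For Example~\ref{KOZex} there is a prior gap: that the translations generate a group isomorphic to $\mathfrak{A}_6$ preserving $H=h+f_\infty$ is itself part of what must be proved; the paper gets this from the Steiner system $\mathrm{St}(3,4,10)$ on the ten half-pencils together with the maximality of $\mathfrak{A}_6$ among symplectic groups on $K3$ surfaces. Finally, your ``discriminant check'' must actually be performed: if $\Z L\oplus A_{a-1}\oplus A_{b-1}$ had index $>1$ in $H^2(S,\Z)_f$, then $L$ could fail to be primitive and the degree and uniqueness claims would fail; only the Lefschetz-number conclusion survives on the rational (full-rank) statement alone.
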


We begin with some lattice-theoretic lemmas and then proceed to construct the 
group actions, proving the theorem for each group.

\subsection{Some lattice theory}
Let $h$ be a nef and big divisor on an Enriques surface $S$.
By \cite[Lemma 2.9 and (2.11)]{Cossec85}, the quantity
\[\Phi (h)=\min \{ (f,\!h)\mid \text{$f\in H^2(S,\Z)_f$ is primitive 
with $(f^2)=0$ and $(f,\!h)>0$.}\}\]
is attained by a nef isotropic element $f$.
We call $\Phi$ the {\em{gonality function}} and
a {\em{gonality half-pencil}} $f$ for $h$ is an isotropic element $f\in H^2(S,\Z)_f$
satisfying $(f,h)=\Phi(h)$.
We see that if $h$ is a $G$-invariant polarization, 
$G$ permutes gonality half-pencils for $h$.
\begin{lem} We have the following.
\begin{enumerate}\label{gonalities}
\item Polarizations $h$ with $(h^2)=10$ and $\Phi(h)=3$ are unique up to 
the orthogonal group ${\rm O}(T_{2,3,7})$. Moreover, there are exactly ten
gonality half-pencils for such $h$.
\item The same holds for $(h^2)=18$ and $\Phi(h)=4$. Moreover, there are 
exactly nine gonality half-pencils for such $h$.
\item The same holds for $(h^2)=30$ and $\Phi (h)=5$. Moreover, there are 
exactly six gonality half-pencils for such $h$.
\end{enumerate}
\end{lem}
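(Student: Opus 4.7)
My plan for Lemma~\ref{gonalities} is a purely lattice-theoretic analysis inside the Enriques lattice $H^2(S,\Z)_f \cong U \oplus E_8$: each statement is intrinsic to this lattice, asserting uniqueness of $h$ up to ${\rm O}(T_{2,3,7})$ together with a count of primitive isotropic vectors at minimal pairing with $h$. I treat the three cases in parallel.

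First, fix any gonality half-pencil $f_1$ for $h$; it is primitive and isotropic. By the standard fact that ${\rm O}(U \oplus E_8)$ acts transitively on primitive isotropic vectors (via Eichler transvections), I may assume $f_1$ is a chosen generator of a hyperbolic summand $U = \langle f_1, e_0\rangle$ with $(f_1, e_0) = 1$. Decomposing $h = af_1 + ke_0 + v$ with $v \in E_8$ and $k = \Phi(h)$, the relation $(h^2) = 2ak + (v^2)$ together with $(v^2) \le 0$ gives a short list of candidate pairs $(a, (v^2))$ in each case.

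Second, I impose the gonality condition $\Phi(h) = k$. Testing $(g, h) \ge k$ against the family of isotropic vectors $g = e_0 + w - \frac{(w^2)}{2} f_1$ parametrized by $w \in E_8$ yields the Eichler-type inequalities $(v, w) \ge k - a + \frac{k(w^2)}{2}$ for every $w \in E_8$. Specializing $w = 0$ gives $a \ge k$, and specializing $w$ to roots of $E_8$ restricts $v$ to a cell of the Coxeter polyhedron. Combined with the determined value of $(v^2)$, the Weyl group $W(E_8)$ (which stabilizes $f_1, e_0$ inside ${\rm O}(U \oplus E_8)$) acts transitively on the set of admissible $v$, because $W(E_8)$ is transitive on vectors of each small norm in $E_8$ and the Coxeter-cell condition cuts out exactly one orbit. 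This proves uniqueness of $h$ up to ${\rm O}(T_{2,3,7})$ in each case.

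Finally, I count gonality half-pencils by enumerating primitive isotropic $g = xf_1 + ye_0 + w$ satisfying $2xy + (w^2) = 0$ and $kx + ay + (v, w) = k$. The inequality $(w^2) \le 0$ restricts $(x, y)$ to a short finite list; for each admissible $(x, y)$ the residual equation in $w \in E_8$ is a coupled linear-quadratic system whose solutions are enumerated explicitly from the normal form of $v$. A direct count gives $10, 9, 6$ in the three cases. The main obstacle I expect is the uniqueness step: one must check that the Eichler-type inequalities, combined with the discriminant constraint $h^\perp \cong A_{a-1} \oplus A_{b-1}$ that emerges from the configuration of the gonality half-pencils, really single out a single $W(E_8)$-orbit of $v$. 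The case $(h^2) = 18$, $\Phi(h) = 4$ is likely the most delicate, since its numerical invariants are the least extremal.
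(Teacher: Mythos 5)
Your strategy --- normalize a gonality half-pencil to a generator of a hyperbolic summand $U\subset U\oplus E_8$ and classify $h=af_1+ke_0+v$ by hand --- is a genuinely different route from the paper's, which simply identifies $h$ with the dual-basis vectors $b_0,b_2,b_4$ of $T_{2,3,7}$ via Cossec's classification \cite[(1.3)]{Cossec85} and then gets the count from the explicit decompositions $b_0=\tfrac13(f_1+\cdots+f_{10})$, $b_2=\tfrac12\bigl((b_0-f_1-f_2)+f_3+\cdots+f_{10}\bigr)$, $b_4=f_5+\cdots+f_{10}$: writing $h=\tfrac1m\sum_{i=1}^N g_i$ with $(g_i,g_j)=1-\delta_{ij}$ gives $(g_i,h)=(N-1)/m=\Phi(h)$, while any other primitive isotropic $f$ has $(f,h)=\tfrac1m\sum_i(f,g_i)\ge N/m>\Phi(h)$, so the $g_i$ are exactly the gonality half-pencils. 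That argument is short and complete; yours, as written, has two genuine gaps in the uniqueness step, which you yourself flag as the main obstacle but do not close.

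First, the ``short list of candidate pairs $(a,(v^2))$'' does not exist yet: $(v^2)\le 0$ and $a\ge k$ give $(v^2)=(h^2)-2ak$ for \emph{every} $a\ge k$, with no upper bound on $a$. Worse, the Eichler-type inequalities $(v,w)\ge k-a+\tfrac{k(w^2)}{2}$ coming from the family $e_0+w-\tfrac{(w^2)}{2}f_1$ become \emph{weaker} as $a$ grows, so they cannot by themselves eliminate large $a$; you would need either to exploit the non-uniqueness of $e_0$ (replace $e_0$ by $e_0+u-\tfrac{(u^2)}{2}f_1$ to minimize $a$, and then determine that minimum) or to test against isotropic vectors $xf_1+ye_0+w$ with $y\ge 2$, and neither is carried out. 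Second, the transitivity claim rests on a false general principle: $W(E_8)$ is \emph{not} transitive on vectors of a fixed norm once the norm reaches $8$ in absolute value (already at norm $-8$ one has the imprimitive vectors $2r$ alongside the primitive ones, and at the relevant norms $-8$, $-14$, $-20$ there are several orbits even of primitive vectors, one for each dominant weight of that norm). So the single-orbit statement must come entirely from the ``Coxeter-cell'' inequalities you invoke, which is exactly the computation left undone --- and the case $(h^2)=18$ you single out is indeed where this is least likely to fall out for free. The final count is likewise asserted rather than performed. The outline is salvageable, but as it stands the crucial steps are placeholders; if you want a self-contained lattice argument, the paper's device of exhibiting $h$ as a rational sum of an isotropic sequence and applying the Cauchy--Schwarz-type estimate above is the efficient way to finish both the identification of the half-pencils and their count.
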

\begin{proof}
In each item, the former assertion is an easy consequence of the construction of the dual basis 
$b_i\ (0\leq i\leq 9)$ of $r_i$ (in Figure \ref{T237}) 
as in \cite[(1.3)]{Cossec85}. We have $h=b_0$,
$h=b_2$ and $h=b_4$ respectively. For the latter assertions, we note the following decompositions
of $h$ into isotropic elements in terms of the isotropic sequence $f_1,\dots,f_{10}$:
\begin{equation*}
b_0=\frac{f_1+\cdots+f_{10}}{3},\ 
b_2=\frac{(b_0-f_1-f_2)+f_3+\cdots+f_{10}}{2},\ 
b_4=f_5+\cdots+f_{10}.
\end{equation*}
By Cauchy-Schwarz inequality, we see that the sets $\{f_i\mid 1\leq i\leq 10\}$,
$\{b_0-f_1-f_2,f_3,\dots,f_{10}\}$ and $\{f_i\mid 5\leq i\leq 10\}$ give gonality half-pencils respectively. We note that the number of gonality half-pencils equals
$\max \{a,b\}$ via Figure \ref{T237} where $a,b$ are as in Lemma \ref{embM11}.
\end{proof}

\subsection{Mathieu action of $\mathfrak S_5$}\label{subsection:S5}
We consider the surface in $\mathbb{P}^4$ defined by 
\begin{equation}\label{A9+A1}
\overline{X} \colon \sum_{1\leq i<j \leq 5}x_ix_j=\sum_{1\leq i<j \leq 5} \frac{1}{x_ix_j}=0,
\end{equation}
which has five nodes at the coordinate points and whose minimal desingularization $X$ 
is a $K3$ surface. The Cremona transformation $\varepsilon \colon (x_i)\mapsto (1/x_i)$ 
induces a free involution and we let $S$ be the Enriques surface $X/\varepsilon$.
The symmetric group $\mathfrak S_5$ acts on $X$ and $S$ by permutations of coordinates.\\

\noindent {\it Proof of Theorem \ref{veryMathieu} for $G=\mathfrak{S}_5$}.
The key is to construct a good (rational) generator set of the second cohomology.
The exceptional curves at nodes are interchanged with the rational curves $\overline{X}\cap \{x_i=0\}$ by $\varepsilon$. Thus we get five smooth rational curves $r_1,\dots,r_5$ 
on $S$ whose dual graph is the complete graph with doubled edges and five vertices, denoted $K_5^{[2]}$. Also, for each even involution $\sigma=(ij)(kl)\in \mathfrak{A}_5$, 
we can find lines
\begin{equation*}
\begin{split}
l'_{\sigma} \colon\ \ x_i\colon x_j\colon x_k\colon x_l=1\colon -1\colon \sqrt{-1}\colon -\sqrt{-1}, \\
l''_{\sigma} \colon\ \ x_i\colon x_j\colon x_k\colon x_l=1\colon -1\colon -\sqrt{-1}\colon \sqrt{-1},
\end{split}
\end{equation*}
lying in $\overline{X}$ and interchanged by $\varepsilon$. 
Hence we have another
$15$ smooth rational curves $l_{\sigma}$ on $S$. 
The incidence relation is given by
\begin{equation*}
(l_{\sigma},l_{\tau})=
\begin{cases}
-2 & \text{ if } \sigma=\tau,\\
1 & \text{ if } \sigma\tau\text{ has order $3$},\\
0 & \text{ otherwise,}\\
\end{cases}
\end{equation*}
and their dual graph is isomorphic to the line graph\footnotemark
$L(P)$ of the famous Petersen graph  $P$ with $10$ vertices and $15$ edges.
\footnotetext{The line graph, or edge graph, $L(\Gamma)$ of a graph $\Gamma$ is the one whose vertices correspond to edges of $\Gamma$ and two vertices 
are connected by an edge if they share a vertex in $\Gamma$.}
\vspace{1cm}
\begin{figure}
\centering
\begin{picture}(126,118)(-63,-55)
\linethickness{0.7pt}
\put(0,55){\circle*{6}}
\put(0,30){\circle*{6}}
\multiput(-52.3,17)(104.6,0){2}{\circle*{6}}
\multiput(-28.5,9.3)(57,0){2}{\circle*{6}}
\multiput(-17.6,-24.3)(35.2,0){2}{\circle*{6}}
\multiput(-32.3,-44.5)(64.6,0){2}{\circle*{6}}
\put(0,55){\line(52.3,-38){52.3}}
\put(0,55){\line(-52.3,-38){52.3}}
\put(52.3,17){\line(-20,-61.5){20}}
\put(-52.3,17){\line(20,-61.5){20}}
\put(-32.3,-44.5){\line(1,0){64.6}}
\put(0,55){\line(0,-1){25}}
\put(52.3,17){\line(-23.8,-7.7){23.8}}
\put(-52.3,17){\line(23.8,-7.7){23.8}}
\put(-32.3,-44.5){\line(14.7,20.2){14.7}}
\put(32.3,-44.5){\line(-14.7,20.2){14.7}}
\put(-28.5,9.3){\line(1,0){57}}
\put(0,30){\line(17.6,-54.3){17.6}}
\put(0,30){\line(-17.6,-54.3){17.6}}
\put(28.5,9.3){\line(-46.1,-33.6){46.1}}
\put(-28.5,9.3){\line(46.1,-33.6){46.1}}
\end{picture}
\caption{The Petersen graph}
\label{Petersen}
\end{figure}
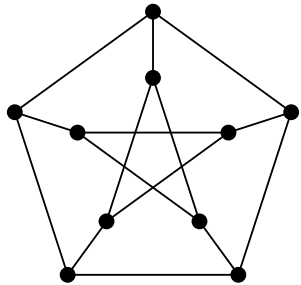
The connections between $K_5^{[2]}$ and $L(P)$ are given by 
\begin{equation*}
(r_i,l_{\sigma})=\begin{cases}
2 & \text{if $\sigma (i)=i$}\\
0 & \text{otherwise}.
\end{cases}
\end{equation*}
The $20$ curves in $K_5^{[2]}\cup L(P)$ generate $H^2(S,\Z)_f$ up to index two.
The overlattice structure is given, for example, by adding the half-pencil of the 
elliptic fibration defined by a pair of disjoint pentagons in $L(P)$. 
There are six such pairs in $L(P)$, hence we have six elliptic 
pencils on $S$ with reducible fibers of type $I_5+I_5$. We denote them by 
$|2f_j|\ (1\leq j\leq 6)$. These classes satisfy $(f_i, f_j)=1-\delta_{ij}$. 

Now, since the symplectic group action by $\mathfrak{S}_5$ is maximal
(for $K3$ surfaces), we have the relation 
\[\sum_{i=1}^5r_i = \sum_{j=1}^6 f_j  \in H^2(S,\Z)_f\]
and this gives the $\mathfrak{S}_5$-invariant polarization $h$ of degree $30$.
Those $f_j$ are exactly the gonality half-pencils for $h$ as in the proof of Lemma \ref{gonalities}.
Moreover, the orthogonal complement of $h$ is spanned by the nine elements
\[(r_i-r_{i+1})/2\ (1\leq i\leq 4),\quad f_j-f_{j+1}\ (1\leq j\leq 5), \]
which is isomorphic to $A_4+A_5$.
The action of $\mathfrak{S}_5$ is isomorphic to that on the root lattice,
therefore we obtain Theorem \ref{veryMathieu} for Example \ref{Ohashi}.
(Every $r_i-r_{i+1}$ is divisible by 2 since $r_1+r_2$  is equivalent to $2(l_{(12)(34)}+l_{(12)(35)}+l_{(12)(45)})$
and so on.)

\begin{rmk}\label{kondo7}
We note that our Enriques surface $S$ is isomorphic to the Enriques surface of type VII
in \cite{kondo86}. In fact, the configuration of smooth rational curves we studied on $S$ is 
the same as that of type VII. In particular, $S$ has a finite automorphism group and 
by the main theorem of \cite{kondo86} we obtain the assertion. 
Our equation \eqref{A9+A1} will be valuable by its simplicity and ease to work with.
\end{rmk}
\begin{rmk}
We can eliminate the variable $x_5$ from \eqref{A9+A1} and we get then the 
symmetric quartic surface 
$s_2^2=s_1s_3$,
where $s_i$ are the fundamental 
symmetric polynomials in $x_1,\dots,x_4$. It has nodes at the coordinate points
and has an obvious $\mathfrak{S}_4$ action, hence as in \cite{Robfest} we have 
a homomorphism 
\[\varphi \colon \mathfrak{S}_4\ltimes (C_2^{*4})\rightarrow \mathrm{Aut}(S),\]
where the generators of $C_2$'s are the covering involutions of the projection 
from coordinate points.
Here we find that, opposed to the cases we treated in \cite{Robfest},
$\varphi$ is not an isomorphism.
In fact, the four involutions corresponding to projections are nothing but 
the transpositions $(x_i x_5)$ and the image of $\varphi$ is
the finite group $\mathfrak{S}_5$. In particular, it is not injective.
By Remark \ref{kondo7}, we see that $\mathrm{Aut}(S)\simeq \mathfrak{S}_5$ 
and in fact $\varphi$ is surjective.
\end{rmk}
\begin{rmk}\label{Clebsch}
Similar to \eqref{A9+A1}, an Enriques surface with a (semi-symplectic) action of $\mathfrak S_5$ is obtained also from the quartic surface
\begin{equation}\label{VI}
\sum_{i=1}^5 x_i=\sum_{i=1}^5 \frac{1}{x_i}=0.
\end{equation}
The subaction of the alternating group $\mathfrak A_5$  is Mathieu
but the full action is not Mathieu.
(The quartic surface is the Hessian of the Clebsch diagonal cubic surface 
$\sum_{i=1}^5 x_i= \sum_{i=1}^5 x_i^3=0,$
and this Enriques surface is Kondo's  of type VI in \cite{kondo86}.
See \cite[Remark 2.4]{D-vG}.)
\end{rmk}

\section{Enriques surfaces of Hesse-Godeaux type}\label{section:Hesse-Godeaux}
In this section we prove Theorem~\ref{veryMathieu}  for  $N_{72}$  and $\mathfrak A_6$.
For that purpose we consider the $K3$ surface in $\mathbb{P}^5$
\begin{equation}\label{Hesse-Godeaux}
X=X_{\lambda,\mu}\colon 
\begin{cases} x_0^2 - \lambda x_1x_2 = y_0^2 - \mu y_1y_2\\
x_1^2 - \lambda x_0x_2 = y_1^2 - \mu y_0y_2\\
x_2^2 - \lambda x_0x_1 = y_2^2 - \mu y_0y_1
\end{cases} 
(\lambda \ne \mu\text{ and }\lambda, \mu \ne 1, \omega, \omega^2), 
\end{equation}
where $\omega=(-1+\sqrt{-3})/2$,
and its Enriques quotient $S$ by the free involution 
$\varepsilon\colon (x:y)\mapsto (x:-y)$. 
The abelian group $C_3^2$ acts on both $X$ and $S$ explicitly by 
\begin{equation}\label{3^2}
\begin{split}
\alpha \colon 
(x_0:x_1:x_2:y_0:y_1:y_2)& \mapsto (x_1:x_2:x_0:y_1:y_2:y_0), \\ 
\beta \colon 
(x_0:x_1:x_2:y_0:y_1:y_2)& \mapsto (x_0:\omega x_1:\omega^2 x_2:y_0:\omega y_1:\omega^2 y_2).
\end{split}
\end{equation}
These surfaces have closer relation with the 
rational elliptic surface $R$ given by the Hesse pencil of plane cubics
\begin{equation}\label{Hesse cubic}
z_0^3 + z_1^3 + z_2^3 -3\kappa z_0z_1z_2=0
\end{equation}
and its two fibers at $\kappa =\lambda,\mu$. We use the following lemma.
\begin{lem}\label{isom}
Let $g \colon \P^2\rightarrow \P^2$ be a collineation (namely a projective linear automorphism) 
preserving the Hesse pencil \eqref{Hesse cubic}. It induces a M\"{o}bius transformation on the parameter 
$\kappa$ of \eqref{Hesse cubic} and we denote it by the same letter $g$. 
Then, for any such $g$, we have an induced isomorphism between the 
Enriques surfaces of Hesse-Godeaux type
\begin{equation}
\begin{split}
\tilde{g} \colon X_{\lambda, \mu}&\rightarrow X_{g(\lambda ), g(\mu )}\\
 (x:y)&\mapsto (c(\lambda,\mu)g(x):g(y))\\
\end{split}
\end{equation}
for a suitable constant $c(\lambda,\mu)$ depending on $\lambda,\mu$ and $g$. 
\end{lem}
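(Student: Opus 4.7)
The plan is to exploit the observation that the defining equations $x_i^2-\lambda x_{i+1}x_{i-1}$ of $X_{\lambda,\mu}$ are, up to a factor $3$, precisely the partial derivatives of the Hesse cubic
\[ F_\kappa(z) := z_0^3+z_1^3+z_2^3-3\kappa z_0z_1z_2 \]
at $\kappa=\lambda$, and similarly for the $y$-side at $\kappa=\mu$. Writing $q_i^\kappa(z):=z_i^2-\kappa z_{i+1}z_{i-1}$, the equations of $X_{\lambda,\mu}$ read $q_i^\lambda(x)=q_i^\mu(y)$ for $i=0,1,2$, so to construct $\tilde g$ it suffices to understand how the triple $(q_0^\kappa,q_1^\kappa,q_2^\kappa)$ transforms under $g$.

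Since $g$ preserves the Hesse pencil, its induced Möbius action on the parameter satisfies
\[ F_\kappa(z) \;=\; c_\kappa\,F_{g(\kappa)}(g(z)) \]
for some nonzero scalar $c_\kappa$ depending on $g$ and $\kappa$. I would differentiate both sides in $z$ and apply the chain rule: writing $G$ for the matrix of $g$, this yields the coordinate-wise identity
\[ q_i^\kappa(z) \;=\; c_\kappa\sum_j G^{T}_{ij}\,q_j^{g(\kappa)}(g(z)), \]
exhibiting the triple $(q_i^\kappa)_i$ as an invertible linear combination of $(q_j^{g(\kappa)}\circ g)_j$ via the matrix $c_\kappa G^T$.

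Substituting this identity once with $(\kappa,z)=(\lambda,x)$ and once with $(\kappa,z)=(\mu,y)$ into the relations $q_i^\lambda(x)=q_i^\mu(y)$, the common invertible factor $G^T$ cancels and one is left with
\[ c_\lambda\,q_j^{g(\lambda)}(g(x)) \;=\; c_\mu\,q_j^{g(\mu)}(g(y)),\qquad j=0,1,2. \]
Choosing $c(\lambda,\mu)$ with $c(\lambda,\mu)^2 = c_\mu/c_\lambda$ (available since $c_\lambda,c_\mu\ne 0$), the equation becomes
\[ q_j^{g(\lambda)}\bigl(c(\lambda,\mu)\,g(x)\bigr) \;=\; q_j^{g(\mu)}(g(y)), \]
which is exactly the condition for $(c(\lambda,\mu)\,g(x):g(y))$ to lie on $X_{g(\lambda),g(\mu)}$. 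Hence $\tilde g$ is a well-defined morphism; applying the same construction to $g^{-1}$ supplies a two-sided inverse, so $\tilde g$ is an isomorphism. It commutes with $\varepsilon\colon (x:y)\mapsto(x:-y)$ because $\tilde g$ acts on the $y$-block simply as $g$, and so it descends to an isomorphism of the Enriques quotients.

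The main obstacle, such as it is, lies in setting up the Hesse-pencil identity $F_\kappa = c_\kappa F_{g(\kappa)}\circ g$ with a well-identified constant $c_\kappa$, and in extracting the square root $c(\lambda,\mu)=\sqrt{c_\mu/c_\lambda}$ (defined up to sign, which merely reflects the choice of $\tilde g$ versus its composition with $\varepsilon$). Once these two points are handled, the rest is a formal chain-rule calculation.
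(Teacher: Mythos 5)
Your proposal is correct and follows essentially the same route as the paper: both differentiate the pencil-preservation identity $F_{g(\kappa)}\circ g = c(\kappa)F_\kappa$ to express the quadrics $q_i^{g(\kappa)}\circ g$ as an invertible linear combination of the $q_j^\kappa$, then rescale the $x$-block by a square root of the ratio of the two constants. The only differences are cosmetic (which side the constant sits on, and your explicit remarks on invertibility and compatibility with $\varepsilon$, which the paper leaves implicit).
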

\begin{proof}
We denote $g$ by the matrix form $(g_{ij})\in \mathrm{GL}(3,\C)$ and use the notation 
$z'=g(z)\in \P^2$, $\kappa'=g(\kappa)\in \P^1$. 
We have the relation
\[z_0'^3+z_1'^3+z_2'^3-3\kappa'z_0'z_1'z_2'=c(\kappa)(z_0^3+z_1^3+z_2^3-3\kappa z_0z_1z_2)\]
where $c(\kappa)$ is a constant depending only on $k$ and $g$. 
Differentiating both sides, we get
\[\sum_{i=0}^2 (z_i'^2-\kappa'z_{i-1}'z_{i+1}')g_{ij}=c(k)(z_j^2-\kappa z_{j-1}z_{j+1})\ (j=0,1,2).\]
This equation applied to $\kappa =\lambda, \mu$ shows that $(x:y)\in X_{\lambda,\mu}$
if and only if $(g(x)/\sqrt{c(\lambda)}:g(y)/\sqrt{c(\mu)})\in X_{g(\lambda),g(\mu)}$.
\end{proof}

The group of collineations preserving the pencil \eqref{Hesse cubic} is the semi-direct product $C_3^2\splitext SL(2, \F_3)$ of the above $C_3^2$ by the binary octahedral group.
It is of order $216$ and called the {\em{Hessian group}}. 
We refer the readers to \cite{Blichfeldt} for the explicit generators of this group. 

In what follows we exhibit some elliptic pencils on $X$.
First, let $C_k\ (k=0,1,2)$ be the conic on $X$ defined by $x_i=y_i\ (i\neq k), \lambda x_k=\mu y_k$. It is easy to see that the divisor 
$C:=C_0+C_1+C_2$ constitutes a $\P^1$-configuration of Kodaira type $I_3$. 
For an element $g$ in the Hessian group, let $C_g$ be the pullback 
$\tilde{g}^* (C)$ of the corresponding configuration $C\subset X_{g(\lambda),g(\mu)}$ via 
Lemma \ref{isom}, where $g$ runs over the following
\[g=\begin{pmatrix}
1 & 1 & 1 \\
\omega & \omega^2 & 1\\
\omega^2 & \omega & 1\\
\end{pmatrix},\ 
\begin{pmatrix}
1 & 1 & \omega \\
\omega & 1 & 1\\
1 & \omega & 1\\
\end{pmatrix},\ 
\begin{pmatrix}
1 & 1 & \omega^2 \\
\omega^2 & 1 & 1\\
1 & \omega^2 & 1\\
\end{pmatrix},
\text{ and }\mathrm{id_R}.
\]
It turns out that these four $I_3$ configurations are disjoint each other and define
an elliptic pencil $F_{\infty}$ on $X$. It is invariant under $\varepsilon$, and we denote 
the induced pencil on $S$ by $|2f_{\infty}|$. It has therefore $4$ fibers of type $I_3$ and 
we call the $12$ rational curves the {\em{h-conics}}.
The pencil is called the {\em{primary pencil}}. 
Note that $h$-conics are stable under the action \eqref{3^2}
of the group $C_3^2$ on $X$ and $S$.
These actions can be seen as induced from the Mordell-Weil group of $R$.

For the second pencils, we make use of 
the difference 
\[(x_0-x_1)(x_0+x_1+\lambda x_2)=(y_0-y_1)(y_0+y_1+\mu y_2),\]
of the 1st and 2nd defining equations \eqref{Hesse-Godeaux} of $X$. 
By this equality, the rational functions 
$F_{00}=(y_0-y_1)/(x_0-x_1)$ and $F'_{00}=(y_0-y_1)/(x_0+x_1+\lambda x_2)$
are elliptic parameters on $X$ and their squares $F_{00}^2, (F'_{00})^2$ define 
elliptic pencils $|2f_{00}|, |2f'_{00}|$ on $S$. 
By applying the $C_3^2$-action \eqref{3^2}, 
we obtain $18$ elliptic parameters on $X$ as follows.
\[F_{kl} = \frac{y_k-\omega^l y_{k+1}}{x_k-\omega^l x_{k+1}},\ 
F'_{kl} = \frac{y_k-\omega^l y_{k+1}}{x_k+\omega^l x_{k+1}+\lambda \omega^{2l}x_{k+2}}
(k,l\in \{0,1,2\}=\mathbb{Z}/3).\]
We denote by $|2f_{kl}|,\ |2f'_{kl}|$ the induced elliptic pencils on $S$. 
\begin{lem}
We have the following relations between numerical classes.
$(1)$ $h=f_{kl}+f'_{kl},\ (h,f_{kl})=2,\ (f_{kl},f_{k'l'})=1-\delta_{kk'}\delta_{ll'}$,
where  $\delta_{kk'}$ is the Kronecker delta and $h\in H^2(S,\Z)_f$ is the natural polarization of degree $4$.
In particular, 
$\{h,f_{00},\dots,f_{22}\}$ is a basis of the $\Q$-vector space
$H^2(S,\Q)$. \\
$(2)$ $(h,f_{\infty})=3$ and $(f_{\infty},f_{kl})=1\ (\forall k,l)$.
\end{lem}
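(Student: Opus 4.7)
The plan is to lift all intersection calculations to the $K3$ surface $X$ via $\pi\colon X \to S$: we have $\pi^*h = H_X$ (a hyperplane section of $X$, with $H_X^2 = 8$), so $h^2 = 4$, and if $f_{kl}^X$ denotes the fiber class of $F_{kl}$ on $X$ then $\pi^*f_{kl} = f_{kl}^X$ and $(\pi^*a, \pi^*b) = 2(a, b)$, so every claim on $S$ reduces to a computation on $X$ up to a factor of $2$.

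The first key step is to identify the fiber class of $F_{kl}$. Forming $(\text{Eq.}\ 1) - \omega^{2l}(\text{Eq.}\ 2)$ and factoring gives
\[(x_0 - \omega^l x_1)(x_0 + \omega^l x_1 + \lambda\omega^{2l}x_2) = (y_0 - \omega^l y_1)(y_0 + \omega^l y_1 + \mu\omega^{2l}y_2)\]
on $X$, confirming that $F_{0l}$ and $F'_{0l}$ are elliptic parameters. Moreover the hyperplane section $\{y_0 = \omega^l y_1\} \cap X$ splits as $D^a_{0l} + B_{0l}$, where $D^a_{0l} = \{x_0 = \omega^l x_1,\, y_0 = \omega^l y_1\} \cap X$ is a complete intersection of two quadrics in a $\P^3$ (hence a curve of degree $4$) and $B_{0l}$ is a fiber of $F_{0l}$. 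Consequently $f_{0l}^X = H_X - D^a_{0l}$, and the analogous splitting for $F'_{0l}$ yields $(f'_{0l})^X = D^a_{0l}$. Equivariance under the $C_3^2$-action extends both the identity and the decomposition to all $(k, l)$.

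Part (1) is then routine. Summing gives $\pi^*(f_{kl} + f'_{kl}) = H_X = \pi^*h$, so $h = f_{kl} + f'_{kl}$ by injectivity of $\pi^*$ on the torsion-free part of $H^2$. Next $(H_X, H_X - D^a_{kl}) = 8 - \deg D^a_{kl} = 4$ gives $(h, f_{kl}) = 2$. For $(k, l) \neq (k', l')$ we expand
\[(f_{kl}^X, f_{k'l'}^X) = 8 - 4 - 4 + (D^a_{kl}, D^a_{k'l'}),\]
so it suffices to show $(D^a_{kl}, D^a_{k'l'}) = 2$. I would verify this by direct set-theoretic intersection: the two supporting $\P^3$'s meet in a $\P^1$ on which the equations of $X$ cut out exactly two transverse points; the $C_3^2$-symmetry on $(\Z/3)^2$ reduces the check to a few residue classes. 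Linear independence of $\{h, f_{00}, \ldots, f_{22}\}$ then follows from the $10 \times 10$ Gram matrix, where a Schur-complement step gives $\det = 4 \cdot \det(-I_9) = -4 \neq 0$.

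For part (2), a fiber of the primary pencil $F_\infty$ is a sum of three $h$-conics, each of degree $2$ in $\P^5$, so $(H_X, f_\infty^X) = 6$ and $(h, f_\infty) = 3$. For the remaining identity, computing $(f_\infty^X, D^a_{kl}) = 4$ on one representative (by summing the three intersections of $D^a_{kl}$ with the conics of a single $I_3$-fiber) yields $(f_\infty^X, f_{kl}^X) = 6 - 4 = 2$, hence $(f_\infty, f_{kl}) = 1$. The main obstacle is verifying the transversality of the explicit intersections $D^a_{kl} \cap D^a_{k'l'}$ and $D^a_{kl} \cap C_j$; although each reduces to a small polynomial system, the uniformity of the answer across all residue classes of $(k - k', l - l')$ modulo $(\Z/3)^2$ is the nontrivial coincidence that underlies the simple form of the Gram matrix.
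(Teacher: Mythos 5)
Your proposal is correct and follows essentially the same route as the paper: both work on the $K3$ cover with the explicit factorization identity $(x_0-\omega^l x_1)(x_0+\omega^l x_1+\lambda\omega^{2l}x_2)=(y_0-\omega^l y_1)(\cdots)$ and use $C_3^2$-equivariance to cut the verification down to a few representative intersections. The paper's own proof is far terser --- part (1) is dismissed as ``deduced by definition,'' and the cross term $(f_\infty,f_{00})$ is computed by restricting the elliptic parameter $F_{00}$ to the three conics $C_0,C_1,C_2$ (getting $1+1+0$) rather than by intersecting them with the degree-$4$ curve $D^a_{00}$ as you do --- but your identification $f_{kl}^X=H_X-D^a_{kl}$, the Schur-complement determinant $4\cdot\det(-I_9)=-4$, and the two transverse points cut out on each line $\{x_0=x_1=0,\,y_0=y_1=0\}$-type intersection (which exist precisely because $\lambda,\mu\neq 1,\omega,\omega^2$) all check out and give the same Gram matrix.
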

\begin{proof}
(1) and the first equality in (2) are deduced by definition. 
For the last equalities, it suffices to compute $(f_{\infty},f_{00})$
since $f_{\infty}$ is invariant under the subgroup $C_3^2$.
We use the representative $F_{\infty}=C_0+C_1+C_2$.
By definition of $F_{00}$, we see that $(F_{00}, C_2)=0$ and 
$(F_{00}, C_i)=1\ (i=0,1)$. 
Therefore, $(f_{\infty},f_{00})=(F_{\infty},F_{00})/2= \sum_{i=0}^2(C_i,F_{00})/2 =1$. 
\end{proof}

The $12$ $h$-conics and $9$ elliptic pencils 
$|2f_{00}|,\dots,|2f_{22}|$ have the following outstanding property.
\begin{lem}\label{first Steiner}
$(1)$ Every $h$-conic is contained as fiber in exactly three of the 9 elliptic fibrations defined by $|2f_{00}|,\dots,|2f_{22}|$.\\
$(2)$ For every pair $|2f_{ij}|, |2f_{kl}|$ of elliptic pencils, there exists a unique $h$-conic which is contained in the fibers of elliptic pencils defined by them.
\end{lem}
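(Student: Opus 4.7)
I would prove (1) by direct calculation on the defining equations, and deduce (2) by recognizing the resulting incidence structure as the affine plane $AG(2,\F_3)$, which is a Steiner triple system $S(2,3,9)$.

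\textbf{Step 1 (original $I_3$).} For each $h$-conic $C_m$ ($m=0,1,2$) in the original fiber $C=C_0+C_1+C_2$, I would substitute its defining equations ($x_i=y_i$ for $i\neq m$ and $\lambda x_m=\mu y_m$) directly into the elliptic parameter $F_{kl}=(y_k-\omega^l y_{k+1})/(x_k-\omega^l x_{k+1})$. When $m\equiv k+2\pmod 3$, the substitutions yield $y_k=x_k$ and $y_{k+1}=x_{k+1}$, so $F_{kl}|_{C_m}\equiv 1$ and $C_m$ lies in the fiber $\{F_{kl}=1\}$. In the two remaining cases ($m=k$ or $m=k+1$) the substitution introduces the factor $\lambda/\mu\neq 1$ and $F_{kl}|_{C_m}$ is a non-constant M\"obius function of the surviving coordinate; hence $C_m$ is not contained in any fiber of $|2f_{kl}|$. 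Thus $C_m$ is a fiber component of exactly the three pencils $|2f_{m+1,l}|$ with $l\in\Z/3$.

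\textbf{Step 2 (other three $I_3$ fibers, and (1)).} The remaining three $I_3$ fibers are the pullbacks $\tilde g^*C$ for the three Hessian elements $g$ listed right before the lemma, via Lemma \ref{isom}. Repeating Step 1 on $X_{g(\lambda),g(\mu)}$ and transferring by $\tilde g$ gives the same count of three pencils per $h$-conic, which establishes (1). Conceptually, the Hessian group $C_3^2\splitext SL(2,\F_3)$ normalizes $C_3^2$, and its quotient $SL(2,\F_3)$ acts on the nine pencils through the natural action on the index set $(k,l)\in\F_3^2$; this action permutes the four parallel classes of lines of $AG(2,\F_3)$. The three $h$-conics from each of the four $I_3$ fibers of the primary pencil then realize the three lines in one of the four parallel classes: the original fiber gives the vertical class $\{k=\mathrm{const}\}$ by Step 1, and the three non-original fibers produce the remaining three parallel classes.

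\textbf{Step 3 (deducing (2)).} By Step 2 the bipartite incidence between the nine pencils $|2f_{kl}|$ and the twelve $h$-conics is precisely the point-line incidence of $AG(2,\F_3)$: nine points, twelve lines, three points per line, four parallel classes of three lines each. Since $AG(2,\F_3)$ is a Steiner system $S(2,3,9)$, any two of the nine pencils lie on exactly one line, i.e.\ are simultaneously contained in the fibers of exactly one $h$-conic.

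\textbf{Main obstacle.} The substantive bookkeeping is Step 2: one must verify that the three $h$-conics in each non-original $I_3$ fiber realize a genuinely new parallel class of lines in $AG(2,\F_3)$, so that all four classes are accounted for. This is a direct but careful computation of the scalar factors in Lemma \ref{isom} for the three Hessian generators listed before the lemma; elegance emerges only once the final incidence is recognized as an affine plane, which matches the $N_{72}=C_3^2\splitext D_8$-symmetry very naturally.
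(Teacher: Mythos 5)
The paper offers no proof to compare against: it states only that ``the proof is a simple computation and we omit it.'' Your argument is a correct way to carry out that computation, and Step 1 is complete as written: on $C_m$ the substitution gives $F_{kl}\equiv 1$ exactly when $k\equiv m+1$, and in the other two cases $F_{kl}$ restricts to a nondegenerate M\"obius function of $x_k/x_{k+1}$ (nondegenerate because $\lambda\ne\mu$, and $x_k/x_{k+1}$ is nonconstant on $C_m$ since a fourth independent linear condition would cut the plane conic down to a line). The only thing you have asserted rather than verified is the heart of Step 2: that $\tilde g^*$ carries each parameter $F_{kl}$ of $X_{g(\lambda),g(\mu)}$ to one of the nine parameters $F_{k'l'}$ of $X_{\lambda,\mu}$, via a permutation of $(k,l)\in\F_3^2$ that moves the vertical parallel class to a new one for each of the three listed $g$. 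This does check out, because the nine linear forms $z_k-\omega^l z_{k+1}$ are permuted (up to scalar) by the Hessian group; e.g.\ for the first listed $g$ one finds
\[
g(z)_0-g(z)_1=(1-\omega)(z_0-\omega^2 z_1),\quad
g(z)_0-\omega g(z)_1=(1-\omega)(z_2-\omega^2 z_0),\quad
g(z)_0-\omega^2 g(z)_1=(1-\omega)(z_1-\omega^2 z_2),
\]
so the vertical line $\{k=0\}$ is sent to the horizontal line $\{l'=2\}$, and the analogous computations for the other two matrices yield the two diagonal classes. With that, all four parallel classes of $AG(2,\F_3)$ are realized, the twelve blocks are the twelve lines, and Step 3 correctly delivers (2). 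So there is no genuine gap, only a deferred verification that you identified accurately and that succeeds; you might alternatively close Step 2 by the numerical relation $\sum_{k,l}(D,f_{kl})=2(D,f_\infty)+3(D,h)=6$ for each $h$-conic $D$ (once $2f_\infty=-3h+\sum f_{kl}$ is available), which gives (1) for all twelve conics at once and avoids the case-by-case transfer.
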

The proof is a simple computation and we omit it. By this lemma, we obtain a Steiner 
system St(2,3,9) on the set of 9 elliptic pencils, where the special triplet is the set of 
pencils containing a (fixed) $h$-conic.

An easy counting shows the following.
\begin{cor}\label{9_4-12_3}
Every elliptic fibration defined by $|2f_{ij}|$ contains four $h$-conics in different fibers.
\end{cor}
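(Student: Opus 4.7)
The plan is a two-step argument: a global incidence count, followed by a per-fibre refinement.

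In the first step, one shows that each pencil $|2f_{ij}|$ contains exactly four $h$-conics as fibre components. By Lemma \ref{first Steiner}(1) every one of the twelve $h$-conics sits in exactly three of the nine pencils, producing $12\cdot 3=36$ ordered (conic, pencil) incidences. Because the group $C_3^2$ from \eqref{3^2} permutes the nine pencils (indexed by $(i,j)\in \Z/3\times\Z/3$) simply transitively, the number of $h$-conics per pencil is the same for all nine, and must therefore equal $36/9=4$. This gives the count.

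In the second step, one shows that these four $h$-conics lie in four \emph{distinct} fibres of $|2f_{ij}|$. I would argue by contradiction: suppose two of them, $C$ and $C'$, share a fibre $F$ of $|2f_{ij}|$, so $(C+C')\cdot f_{ij}=0$. Each $h$-conic is a component of an $I_3$-fibre of the primary pencil $|2f_\infty|$, and distinct fibres of $|2f_\infty|$ are disjoint on $S$, so the intersection $(C\cdot C')$ equals $1$ or $0$ according as $C$ and $C'$ lie in the same or in different $I_3$-fibres of $|2f_\infty|$. In the former case, letting $C''$ denote the third component of that $I_3$-fibre, one has $(C+C'+C'')\cdot f_{ij}=F_\infty\cdot f_{ij}=2$, forcing $C''\cdot f_{ij}=2$; combining this with the uniqueness clause in Lemma \ref{first Steiner}(2) (no second pencil contains both $C$ and $C'$) and tracing through the Steiner incidence structure set up in step one then yields the desired contradiction, and the latter case is ruled out similarly.

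The main obstacle is this second step: the counting alone pins down the number four but not the distribution of the four $h$-conics among the fibres. If the direct intersection bookkeeping above turns out cumbersome, an alternative is to identify $|2f_{ij}|$ via Lemma \ref{isom} with a twist of the Hesse rational elliptic fibration \eqref{Hesse cubic}, whose four singular fibres of type $I_3$ saturate $e(S)=12$. Once $|2f_{ij}|$ is known to have exactly four reducible fibres, the pigeonhole restriction that no three $h$-conics of a single $I_3$-fibre of $|2f_\infty|$ can share a fibre of $|2f_{ij}|$ forces exactly one $h$-conic in each of the four fibres of $|2f_{ij}|$.
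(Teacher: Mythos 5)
Your first step is sound and is essentially the paper's ``easy counting'': the $12\cdot 3=36$ incidences from Lemma \ref{first Steiner}(1) distributed over the nine pencils, which are permuted transitively by $C_3^2$ (equivalently, the design count $8=2n$ coming from Lemma \ref{first Steiner}(2)), give four $h$-conics per pencil. The gap is in your second step. The relation $(C+C')\cdot f_{ij}=0$ is automatic for fibre components and carries no information, and the deduction $(C''\cdot f_{ij})=2$ is not by itself contradictory: an $h$-conic that is not a fibre component of $|2f_{ij}|$ is simply a multisection and may perfectly well have degree $2$ over the base. Lemma \ref{first Steiner}(2) is a statement about pairs of \emph{pencils} sharing a unique $h$-conic; it says nothing against two $h$-conics lying in one fibre of a single pencil (each pencil does contain four $h$-conics among its fibres), so ``tracing through the Steiner incidence structure'' does not close the argument, and the second case is likewise left unproved. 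Your alternative route has the same defect: the pigeonhole restriction you invoke only excludes $h$-conics from the \emph{same} $I_3$-fibre of $|2f_\infty|$ from sharing a fibre of $|2f_{ij}|$, not $h$-conics from different primary fibres.

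The missing ingredient is the $h$-degree. Since $(h,f_{ij})=2$, a fibre $F\in|2f_{ij}|$ has $(h\cdot F)=4$, while each $h$-conic has $h$-degree $2$ and every curve has positive $h$-degree; hence a fibre containing two $h$-conics $C,C'$ must equal $C+C'$ exactly, and then $0=(F^2)=-4+2(C\cdot C')$ forces $(C\cdot C')=2$, contradicting the fact that two distinct $h$-conics meet in at most one point (they are components of the $I_3$-fibres of $|2f_\infty|$). The paper itself sidesteps all of this by a direct computation: it exhibits the four critical values $1$, $(\lambda-1)/(\mu-1)$, $(\lambda-\omega)/(\mu-\omega)$, $(\lambda-\bar\omega)/(\mu-\bar\omega)$ of the elliptic parameter of $|2f_{00}|$ at which the fibre contains an $h$-conic; these are pairwise distinct, so the four conics lie in four different fibres, and the remaining eight pencils follow by the $C_3^2$-symmetry.
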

For example, the elliptic parameter  $(y_0-y_1)^2/(x_0-x_1)^2$ of  $|2f_{00}|$ has four critical values
$$1, \frac{\lambda -1}{\mu - 1}, \frac{\lambda - \omega}{\mu - \omega}, \frac{\lambda - \bar\omega}{\mu - \bar\omega},$$
at which the fiber contain an $h$-conic.
Therefore, the elliptic pencils and the $h$-conics form the (Hesse) $(9_4,12_3)$-configuration.

\subsection{Mathieu action of $N_{72}$.}\label{subsection:N72}
In this subsection we define a non-linear automorphism of Enriques surfaces of Hesse-Godeaux type to construct an $N_{72}$-action.
For that purpose, we assume that 
\begin{equation}\label{special relation}
(\lambda +1)(\mu  + 1) = 1
\end{equation}
holds and consider the matrix $A=(a_{kl})_{0\leq k,l\leq 2}$
defined by 
\begin{equation}\label{5in8}
 A=\begin{pmatrix}
\mu x_0 & x_2+cy_2 & x_1-cy_1\\
x_2-cy_2 & \mu x_1 & x_0+cy_0\\
x_1+cy_1 & x_0-cy_0 & \mu x_2
\end{pmatrix}, 
\end{equation}
where $c$ is a constant satisfying $c^2=1-\mu^2$.
We regard $(a_{kl})$ as the homogeneous coordinates of $\mathbb{P}^8$.
Then the above expression \eqref{5in8} defines an embedding $\mathbb{P}^5 \subset \mathbb{P}^8$ whose linear equations are given by
\[2a_{kk}=\mu(a_{lm}+a_{ml}),\quad \{k,l,m\}=\{0,1,2\}.\]
Let $\delta$ be the correspondence 
$A\mapsto A^{\mathrm{adj}}=(\widetilde{a}_{kl})_{0\leq k,l\leq 2}$, 
the adjoint matrix (namely, the transposed cofactor matrix) of $A$.\footnote{ 
For example, the $(0,1)$-entry $\widetilde{a}_{01}$ is given by 
$-\mu x_2(x_2+cy_2)+(x_0-cy_0)(x_1-cy_1)$.}
By $(A^{\mathrm{adj}})^{\mathrm{adj}}=(\det A)A$,
$\delta$ defines a birational involution of $\mathbb{P}^8$.
Moreover, we can check that the defining equations \eqref{Hesse-Godeaux} of the $K3$ surface $X$ is
nothing but the equations 
\[2\widetilde{a}_{kk}=\mu(\widetilde{a}_{lm}+\widetilde{a}_{ml}),\quad \{k,l,m\}=\{0,1,2\}\]
for the adjoint matrix  $A^{\mathrm{adj}}$.
This shows that $\delta$ restricts to an involution on $X$.
In terms of new coordinates, $\varepsilon$ on $\mathbb{P}^5$ 
is identified with the involution $A\mapsto \!^tA$.
Since the coordinate sets $\{x_i\}$ and $\{y_i\}$ form the basis of the 
positive and negative eigenspaces of this involution respectively, 
we see that $\delta$ commutes with $\varepsilon$
and acts on $S$, too.

Now we consider the most special Enriques surface of 
Hesse-Godeaux type with parameter $\lambda, \mu=1 \pm \sqrt{3}$, which satisfies \eqref{special relation}.
The fibers of \eqref{Hesse cubic} at $k=\lambda,\mu$ are
preserved by the collineation 
\[g=\begin{pmatrix}
1&1&1 \\ 1&\omega&\omega^2 \\ 1&\omega^2&\omega
\end{pmatrix}.\]
Via Lemma \ref{isom}, it induces an automorphism of 
$X_{\lambda,\mu}$. Let $\gamma$ be the one induced on $S$.
The group of automorphisms on $S$ generated by $\alpha, \beta, \gamma, \delta$ is denoted by $G$.
It is easy to see that the linear automorphisms $\alpha, \beta, \gamma$ generate a subgroup 
$C_3^2\splitext C_4$ of order $36$.
To see the relations among $\alpha, \beta,\gamma$ 
and the non-linear $\delta$, we note that the linear ones
$\alpha, \beta$ and $\gamma$ can be identified with the 
automorphisms $A\mapsto \!^tB AB$, where matrices $B$ are given by 
\[B_{\alpha}=\begin{pmatrix} 0 & 0 & 1 \\
1 & 0 & 0 \\ 0 & 1 & 0\end{pmatrix},
B_{\beta}=\begin{pmatrix} 1 & 0 & 0 \\
0 & \omega^2 & 0 \\ 0 & 0 & \omega\end{pmatrix},
B_{\gamma}=\begin{pmatrix} 1 & 1 & 1 \\
1 & \omega^2 & \omega \\ 1 & \omega & \omega^2\end{pmatrix}.
\] 
From this observation we easily obtain the equalities 
\[\delta \alpha=\alpha\delta,\ \delta \beta=\beta^2
\delta,\ \delta \gamma=\gamma^{-1}\delta.\] 
Thus, the group $G$ is really the one $C_3^2\splitext D_8\simeq 
N_{72}$.\\

\noindent {\it Proof of Theorem \ref{veryMathieu} for $G=N_{72}$}.
Let us study the action of the Cremona involution $\delta$ on cohomology.
Let $I$ be the indeterminacy locus of $\delta$ as a transformation of 
$\P^5$. It is obtained by cutting the Segre variety 
\[\Sigma_{2,2}=\{A\in \P^8\mid \mathrm{rank}\, A\leq 1\}\]
three times by hyperplanes, hence is an elliptic curve of degree $6$. 
(The smoothness of the linear section is easily seen
by using two projections $I\rightarrow \P^2$.)
In fact, since $I$ and the curve $C_0+C_1+C_2$ are disjoint, $I$ is linearly equivalent to the 
primary pencil $F_{\infty}$.

Since $\delta$ is defined by the linear system of quadrics containing $I$, 
we obtain $\delta^* (h)=2h-f_{\infty}$. It follows that $\tilde{h}=h+\delta^*(h)$ 
is the polarization of degree $18$ invariant under $G=N_{72}$.
By looking at the intersection numbers with generators of $H^2(S,\Q)$, we get 
$2f_{\infty}=-3h+\sum_{k,l}f_{kl}$ and we see 
\begin{equation*}
\tilde{h}=3h-f_{\infty}=\frac12\sum_{k,l} (h-f_{kl})
=\frac12(f'_{00}+\cdots+f'_{22}).
\end{equation*}
Therefore, $f'_{kl}\ (0\leq k,l\leq 2)$ 
are the gonality half-pencils for $\tilde{h}$. The orthogonal complement of $\tilde{h}$ is 
spanned by $h-f_{\infty}$ and the differences $f'_{kl}-f'_{k'l'}\ (k,l,k',l'\in \{0,1,2\})$.
They generate the lattice $A_1+A_9$ and we obtain Theorem \ref{veryMathieu}
for Example \ref{Mukai}.

\subsection{Mathieu action of $\mathfrak{A}_6$.}\label{subsection:A6}
We continue to assume that $\lambda, \mu=1\pm \sqrt{3}$.
As we saw the primary elliptic fibration  $|2f_\infty|$  is of {\it Hesse type}, that is, has four singular fibers of type $I_3$, 
and each of the nine elliptic fibrations $|2f_{00}|,\dots, |2f_{22}|$ has four reducible fibers (Corollary~\ref{9_4-12_3}). 
On our special surface, each reducible fiber of  $X \to \P^1$  becomes the union of a conic and two lines.
An example of such line in $X$  is given in parametrization by
\begin{multline}
(x_0:x_1:x_2:y_0:y_1:y_2) \\
= ((1-\sqrt{3})t: t + \sqrt{3}: t- \sqrt{3}: 1+ \sqrt{3}: \sqrt{3}t+1: -\sqrt{3}t+1).
\end{multline}
In fact, the elliptic parameters of $|2f_{01}|, |2f_{02}|, |2f_{21}|, |2f_{22}|$ takes constant values on this line.
In total,  36 lines appear on  $X$ in such a way and we obtain 18 smooth rational curves on  the Enriques surface  $S$. We call them {\it $h$-lines}.
These rational curves and 9 elliptic pencils  $|2f_{00}|,\dots,|2f_{22}|$  
satisfy the following:

\begin{lem}\label{second Steiner}
$(1)$ Every $h$-line is contained as fiber in exactly four of the 9 elliptic fibrations defined by $|2f_{00}|,\dots,|2f_{22}|$.

$(2)$ For every triple $|2f_{ij}|, |2f_{kl}|,  |2f_{mn}|$ of elliptic pencils, there exists a unique $h$-line which is contained in the fiber of elliptic pencils defined by them.
\end{lem}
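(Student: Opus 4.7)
The plan is to argue in parallel with Lemma \ref{first Steiner}, reducing the lemma to explicit computation on one orbit representative by invoking the $N_{72}$-equivariance of $S$. I will identify the index set $(\Z/3)^2$ of the nine pencils with the affine plane $\mathrm{AG}(2,3)$, so that Lemma \ref{first Steiner}(2) furnishes a Steiner triple system $S(2,3,9)$ whose $12$ blocks are precisely the $h$-conic triples.

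For (1), since the $N_{72}$-action on $S$ permutes the 18 $h$-lines and the 9 pencils equivariantly and containment as a fiber component is an $N_{72}$-invariant property, it suffices to verify the count on one representative per orbit. On the explicit $h$-line displayed before the lemma, direct substitution shows that $F_{kl}(t) = (y_k - \omega^l y_{k+1})/(x_k - \omega^l x_{k+1})$ is constant in $t$ precisely for $(k,l) \in \{(0,1), (0,2), (2,1), (2,2)\}$: the proportionality of numerator and denominator as linear polynomials in $t$ reduces to algebraic identities such as $(1-\omega^a)^2 - 3 = 3\omega^b$, which follow from $\omega^2+\omega+1=0$ and $\lambda, \mu = 1 \pm \sqrt{3}$. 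A representative of the second $C_3^2$-orbit, obtained by applying $\gamma$ or $\delta$, yields the same count of four containing pencils by an analogous computation.

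For (2), I would set up a double count: there are $18 \cdot \binom{4}{3} = 72$ (line, triple) incidences and $\binom{9}{3} = 84$ triples in total. By Lemma \ref{first Steiner}, $12$ of these triples are lines of $\mathrm{AG}(2,3)$ carrying the $h$-conics, so $72$ non-collinear triples remain; by (1) the 4-pencil set of each $h$-line is a 4-cap in $\mathrm{AG}(2,3)$ (no three collinear), so each of its $\binom{4}{3}=4$ triples is non-collinear. The statement then reduces to showing that the 18 $h$-line 4-caps cover the 72 non-collinear triples exactly once, equivalently that no two $h$-lines share three common pencils.

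The main obstacle is this combinatorial disjointness. I would establish it either by enumerating the $N_{72}$-orbits of the $h$-line 4-caps explicitly using the action of $\alpha, \beta, \gamma, \delta$ on the indices, or by combining the fact that each non-collinear triple lies in exactly three of the $54$ parallelograms (= 4-caps) of $\mathrm{AG}(2,3)$ with orbit-size constraints coming from $N_{72}$; once one verifies that precisely 18 of the 54 parallelograms are realized as 4-pencil sets of $h$-lines, the numerology $18 \cdot 4 = 72$ forces the covering to be exact and therefore disjoint.
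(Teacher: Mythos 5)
The paper gives no written proof of this lemma (as with Lemma \ref{first Steiner}, it is treated as a direct computation), so the question is whether your reduction actually closes. Part (1) does: the check on the displayed line is exactly the computation the paper alludes to, the identity $(1-\omega)^2=-3\omega$ gives the proportionality of numerator and denominator for $F_{01},F_{02},F_{21},F_{22}$, and the translation action of $C_3^2=\langle\alpha,\beta\rangle$ on the index set reduces the remaining lines to one further orbit representative. One caution on the equivariance you invoke: $\delta$ does \emph{not} permute the nine pencils $|2f_{kl}|$. Since $\delta$ permutes the gonality half-pencils $f'_{kl}=h-f_{kl}$ of the invariant polarization $\tilde h$ and $\delta^*h=2h-f_{\infty}$, one gets $\delta^*f_{kl}=h-f_{\infty}+f_{k'l'}$, which is not any $f_{mn}$. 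So you should work with $\langle\alpha,\beta,\gamma\rangle$ (or just $C_3^2$ and two explicit lines), which is all part (1) needs.

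The genuine gap is the one you name yourself in part (2): that no two $h$-lines have three of the nine pencils in common. Your ``numerology'' argument does not deliver it --- knowing that the $18$ realized $4$-caps are pairwise distinct and that $18\cdot 4=72$ equals the number of non-collinear triples does not prevent two caps from sharing a triple (some other triple would then simply go uncovered); exactness of the cover is \emph{equivalent} to the disjointness you want, not a consequence of the count, and the explicit orbit enumeration you offer as the alternative is left undone. The step can be closed with intersection numbers instead of enumeration. An $h$-line $L$ has $(L,h)=1$, $(L,f_{\infty})=1$ (since $(L,H)=2$ for $H=h+f_{\infty}$), and $2f_{\infty}=-3h+\sum f_{kl}$ gives $\sum_{kl}(L,f_{kl})=5$; combined with part (1) this forces $(L,f_{kl})=0$ on the $4$-set and $=1$ elsewhere. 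As $\{h,f_{00},\dots,f_{22}\}$ is a $\Q$-basis, two distinct $h$-lines $L,L'$ whose $4$-sets share three elements would satisfy $L-L'=\pm(f_{ij}-f_{kl})$ numerically, hence $(L-L')^2=-2$ and $(L,L')=-1$, impossible for distinct irreducible curves. With that inserted, your double count does finish part (2) --- understood, as in the Proposition that follows, as a statement about the $72$ triples not already accounted for by Lemma \ref{first Steiner}; as literally written the $12$ collinear triples carry an $h$-conic and no $h$-line.
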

\begin{prop}
Let $T$ be the set of ten elliptic pencils $|2f_{ij}| (0\leq i,j\leq 2)$ and the primary 
pencil $|2f_{\infty}|$. 
We call a 4-subset of $T$ a {\it special quartet} if it is the set of pencils 
which contain a (fixed) $h$-conic or a (fixed) $h$-line. 
Then, the set of special quartets forms a Steiner system
$\mathrm{St}(3,4,10)$ on $T$. 
\end{prop}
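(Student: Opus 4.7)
The plan is to exploit the counting identity behind any Steiner system: if a collection $\mathcal B$ of $4$-subsets of a $10$-set satisfies $|\mathcal B|\cdot\binom{4}{3}=\binom{10}{3}$ and every $3$-subset is contained in at least one block, then automatically every $3$-subset lies in exactly one block, so $\mathcal B$ is an $\mathrm{St}(3,4,10)$. I would first count, then exhibit coverings, and let the counting force uniqueness.

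For the count, the special quartets come in two flavours: one for each of the $12$ $h$-conics and one for each of the $18$ $h$-lines. So $|\mathcal B|=30$, and $30\cdot\binom{4}{3}=120=\binom{10}{3}$, which is the correct number for $\mathrm{St}(3,4,10)$. Note also that an $h$-conic quartet always contains $|2f_\infty|$ (because each $h$-conic is a component of an $I_3$-fiber of the primary pencil, by definition), whereas an $h$-line quartet is a $4$-subset of $\{|2f_{ij}|\mid 0\le i,j\le 2\}$ and never contains $|2f_\infty|$.

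For the covering property, I would split a $3$-subset of $T$ according to whether it contains $|2f_\infty|$. If it does, write it as $\{|2f_\infty|,|2f_{ij}|,|2f_{kl}|\}$; Lemma \ref{first Steiner}(2) supplies a unique $h$-conic contained in the fibers of both $|2f_{ij}|$ and $|2f_{kl}|$, and the corresponding $h$-conic quartet (which, as observed above, contains $|2f_\infty|$) covers the triple. If the $3$-subset lies entirely in the nine pencils $\{|2f_{ij}|\}$, then Lemma \ref{second Steiner}(2) produces an $h$-line lying in the fibers of all three pencils, yielding an $h$-line quartet covering the triple; in the exceptional configurations where the three pencils come from the Steiner system on $h$-conics of Lemma \ref{first Steiner}, the corresponding $h$-conic quartet covers the triple instead.

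The main obstacle is exactly this last case, where an a priori risk of double covering would have to be excluded by a geometric argument. The counting identity sidesteps that work: once every $3$-subset is covered at least once and the total block count is the tight value $30$, the surjection from (block, interior $3$-subset) pairs to $3$-subsets of $T$ must be a bijection. Concretely, $h$-conic quartets contribute $12\cdot\binom{3}{3}=12$ coverings of triples in the nine-set and $12\cdot\binom{3}{2}\cdot 1=36$ coverings of triples through $|2f_\infty|$ (using Lemma \ref{first Steiner}(2) for uniqueness within $h$-conic quartets), while $h$-line quartets contribute $18\cdot\binom{4}{3}=72$ coverings of triples in the nine-set, for a total of $12+36+72=120=\binom{10}{3}$. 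Equality then forces every triple to be covered exactly once, which is the $\mathrm{St}(3,4,10)$ property.
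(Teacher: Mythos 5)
Your proof is correct and follows essentially the same route as the paper, whose proof of this proposition is simply the one-line citation of Lemmas \ref{first Steiner} and \ref{second Steiner}. Your explicit double count ($12\cdot 3+12\cdot 1+18\cdot 4=120=\binom{10}{3}$) is precisely the bookkeeping the paper leaves implicit, and you are right to flag that the $12$ special triplets of the $\mathrm{St}(2,3,9)$ must be covered by $h$-conic quartets rather than by $h$-lines (indeed the incidence count $18\cdot 4=72<\binom{9}{3}=84$ shows Lemma \ref{second Steiner}(2) can only apply to the remaining $72$ triples).
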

\begin{proof} 
It follows from Lemmas \ref{first Steiner} and \ref{second Steiner}.
\end{proof}

\begin{rmk}\label{15A+15B}
The above 30 rational curves, say $C_\sigma$'s, are parametrized by the odd involutions $\sigma$ in the symmetric group  $\mathfrak S_6$.
There are two type of such involutions, 15 transpositions and 15 of permutation type $(2)^3$.
The intersection number $(C_\sigma, C_\tau)$ is equal to 1
if $\sigma$ and $\tau$  do not commute.
When two distinct $\sigma$ and $\tau$ commute, $(C_\sigma, C_\tau)$ is equal to 0 if their permutation types are the same and to 2 otherwise.
\end{rmk}

Let  $f: S \to \P^1$  be an elliptic Enriques surface of Hesse type.
The Jacobian fibration is a rational elliptic surface induced from the Hesse pencil \eqref{Hesse cubic}.
Its Mordell-Weil group is isomorphic to $C_3^2$ and acts on $f: S \to \P^1$ by translation.
The following is the key for our construction of  $\mathfrak A_6$-action.

\begin{lem}
For an element  $H \in H^2(S, \Z)_f$, the following two conditions are equivalent:

$(1)$ $H$  is invariant under the induced action of $C_3^2$.

$(2)$ For every reducible fibre  $C_1+C_2+ C_3$ of  $f$, we have  
$$(H. C_1) = (H. C_2) = (H. C_3).$$
\end{lem}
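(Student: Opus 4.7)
The plan rests on a single geometric input: the induced action of $C_3^2$ permutes the three components of each $I_3$ fibre cyclically. Since the Jacobian of an elliptic Enriques surface of Hesse type is the rational elliptic surface associated with the Hesse pencil, $C_3^2$ is the Mordell--Weil group; at each of its four $I_3$ fibres the nine base-point sections of the Hesse pencil distribute evenly ($3+3+3$) among the three components, so the map $C_3^2\twoheadrightarrow\mathbb{Z}/3$ into the component group is surjective. I would begin by recording this to conclude that, for each reducible fibre $F^{(k)}=C_1^{(k)}+C_2^{(k)}+C_3^{(k)}$, $k=1,\dots,4$, there is an element $g_k\in C_3^2$ cycling its three components.

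The direction $(1)\Rightarrow(2)$ is then immediate: if $g_k^{\ast}H=H$, then
\[
(H.\,C_i^{(k)})=(g_k^{\ast}H.\,g_k^{\ast}C_i^{(k)})=(H.\,C_{i-1}^{(k)}),
\]
which forces the three intersection numbers on the $k$-th fibre to coincide.

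For $(2)\Rightarrow(1)$ I would match dimensions of two subspaces of $H^2(S,\mathbb{Q})$. First, the Lefschetz table in the introduction gives $L(g)=3$ for every semi-symplectic automorphism of order $3$; since the $C_3^2$ action on $S$ lifts to a symplectic action on the $K3$ cover (translations on elliptic fibres preserve the holomorphic $2$-form), each non-identity $g\in C_3^2$ is semi-symplectic of order $3$, and $\operatorname{tr}(g^{\ast}\mid H^2(S,\mathbb{Q}))=L(g)-2=1$. Averaging over $C_3^2$ yields
\[
\dim H^2(S,\mathbb{Q})^{C_3^2}=\frac{10+8\cdot 1}{9}=2.
\]
Second, condition (2) defines the subspace $V_0\subset H^2(S,\mathbb{Q})$ orthogonal to the eight vectors $C_1^{(k)}-C_2^{(k)}$ and $C_2^{(k)}-C_3^{(k)}$, $k=1,\dots,4$. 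Since components of different reducible fibres are disjoint, these eight classes lie in four mutually orthogonal, negative-definite rank-$2$ sublattices, so their span has rank $8$ and $\dim V_0=10-8=2$. The inclusion $H^2(S,\mathbb{Q})^{C_3^2}\subset V_0$ coming from the easy direction, together with the matching dimensions, forces equality. An integral class $H$ satisfying (2) therefore lies in the fixed subspace, proving (1).

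The only genuinely substantive step is the identification of $C_3^2$ with the Mordell--Weil group of the Hesse Jacobian and the surjectivity onto each $\mathbb{Z}/3$ component group; once the cyclic permutation on each $I_3$ fibre is available, the rest is a Lefschetz-trace computation plus the fact that components of distinct reducible fibres are disjoint.
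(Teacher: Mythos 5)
Your proof is correct and follows essentially the same route as the paper: the easy direction via transitivity of the translation action on fibre components, rank $2$ for the invariant part via the Lefschetz/character computation, and a rank bound for the ``equal degrees'' subgroup forcing equality. The only cosmetic differences are that you bound that subgroup as the orthogonal complement of the rank-$8$ lattice of component differences (the paper instead notes that its intersection with $F^\perp$ is generated by the fibre class $F$), and that you justify the transitivity via surjectivity of the Mordell--Weil group onto each component group, which the paper simply asserts.
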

\proof
Consider the two submodules  $A$  and  $B$ of  $H^2(S, \Z)_f$ consisting of  $H$ which satisfies (1) and (2), respectively, that is, the $C_3^2$-invariant part  $A$  and the equal degree part  $B$.
Both $H^2(S, \Z)_f/A$ and $H^2(S, \Z)_f/B$  are free modules.
Since the action of $C_3^2$  on $\{C_1, C_2, C_3\}$ is transitive on each of four reducible fibers, we have  $A \subset  B$.
By the computation of character (see Proposition \ref{chara}), $A$  is of rank 2.
Since the intersection  $B \cap F^\perp$  is generated by $F$, where $F$ is the fiber class,
$B$  is of rank at most 2.
Hence we have  $A=B$. 
\qed

Now we focus on the new polarization $H=h+f_\infty$, which is of degree 10.
All $h$-conics and $h$-lines are of degree 2 with respect to  $H$.
Hence, by the above lemma, the translations by Mordell-Weil groups of $|2f_{ij}|$ all 
preserve  $H$.
Since  $f_\infty, f_{00}, \ldots, f_{22}$ are the gonality half-pencils of  $H$,  
the translation acts on the set $\{f_\infty, f_{00}, \ldots, f_{22}\}$ by permutation
(Lemma \ref{gonalities} and its preceding sentence).
Let  $G$  be the automorphism group of  $(S, h)$  generated by all these translations.
Then $G$  contains 10 subgroups isomorphic to  $C_3^2$  and preserves the Steiner system St(3,4,10).
Hence we have a surjective homomorphism  $G \to \mathfrak A_6 \subset {\rm Aut}$ St(3,4,10)$\simeq \mathrm{Aut}(\mathfrak{A}_6)$.
Since $\mathfrak A_6$  is a maximal group which can act on a K3 surface symplectically, 
$G$  is isomorphic to $\mathfrak A_6$.
 
\medskip
\noindent {\it Proof of Theorem \ref{veryMathieu} for $G=\mathfrak{A}_6$}.

The polarization $H=h+ f_\infty$  on $S$ is of degree 10
and the invariant lattice $H^2(S,\Z)_f^{\mathfrak{A}_6}$ is spanned by it.
The orthogonal complement of $H$ in $H^2(S,\Z)_f$ is generated by the differences of 
the gonality half-pencils $f_{00},\dots,f_{22}$ and $f_\infty$.
Hence it is $\mathfrak{A}_6$-equivariantly isomorphic to the $A_9$-lattice and
we obtain Theorem \ref{veryMathieu} for Example \ref{KOZex}.

\begin{rmk}
The intersection of two semi-symplectic actions of $N_{72}$ and  $\mathfrak{A}_6$ on  $S$  is $C_3^2\splitext C_4$.
Hence we obtain a homomorphism $N_{72}*\mathfrak{A}_6 \to {\rm Aut}^{ss}\, S$ from the amalgam over $C_3^2\splitext C_4$.
In the forthcoming paper we will study the structure of ${\rm Aut}\, S$   using this homomorphism and Remark~\ref{15A+15B}.
\end{rmk}
\section{Mathieu actions of $C_2 \times \mathfrak A_4$ and $C_2\times C_4$}\label{3->1}

We prove that (3) of Theorem~\ref{main2} implies  (1) of Theorem~\ref{main}.
By Theorem~\ref{veryMathieu}, it suffices to prove for two groups $C_2 \times \mathfrak A_4$ and $C_2\times C_4$.
We realize their Mathieu actions on the Enriques surface  $S$  in Example~\ref{H192}.
The $K3$-cover of  $S$  is  a surface $X :  F(u,v,w) =0$  of tri-degree $(2,2,2)$ in  $\P^1 \times \P^1 \times \P^1$.
A nonzero 2-form on  $X$  is obtained as residue of the rational 3-form 
$du \wedge dv \wedge dw/F(u,v,w)$.
Hence $X$  has the following symplectic actions.
\begin{anumerate}
\item 
The automorphism
$(u, v, w) \mapsto (\pm u, \pm v, \pm w)$
with even number of  \lq$-1$'s is symplectic.
Hence $C_2^2$  acts symplectically.
\item $(u, v, w) \mapsto (-\sqrt{-1}/u, -\sqrt{-1}/v, -\sqrt{-1}/w)$  is a symplectic involution.
\item For a permutation  $\sigma$  of $\{u, v, w\}$, 
$$(u, v, w) \mapsto (\sigma(u)^{\pm1}, \sigma(v)^{\pm1}, \sigma(w)^{\pm1})$$
is a symplectic automorphism if the parity of the number of  \lq$-1$'s  agree with that of  $\sigma$.
Hence  $\mathfrak S_4 = C_2^2\splitext\mathfrak S_3$  acts symplectically.
\end{anumerate}
(a) and (b)  generate a group isomorphic to $C_2^3$.
Hence the semi-direct product  $H= C_2^3\splitext \mathfrak S_4$  acts on  $X$ symplectically.
It is easily checked that this is $H_{192}$ (cf. the remark on p.\ 192 in \cite{mukai88}).
Since the involution  $\varepsilon: (u,v,w) \mapsto (-u,-v,-w)$  commutes with the above automorphisms,
$H$  acts semi-symplectically on the quotient Enriques surface  $S$.
The actions (a) are Mathieu since all involutions have only elliptic curves as fixed curves.
So is the action by (b) and the composite of (a) and (b) since they have only isolated fixed
points.
Hence the action (a)$\cdot$(b) of  $C_2^3$  and the automorphism $(u, v, w) \mapsto (v, w, u)$  generate a Mathieu action of  $C_2 \times \mathfrak A_4$.

The automorphism 
\begin{equation}\label{order 4}
(u, v, w) \mapsto (\sqrt{-1}u, \sqrt{-1}v, -\sqrt{-1}/w)
\end{equation}
is of order 4 and Mathieu.
In fact, it has exactly four fixed points, two of which are symplectic and the rest of which are anti-symplectic.
The automorphism  \eqref{order 4} and the involution (a) generate a Mathieu action of  $C_4 \times C_2$.
\qed

\begin{rmk}
The Enriques surface of  Example~\ref{H192} is the normalization of the sextic surface
$\sum_1^4 x_i^2+ \sqrt{-1}x_1x_2x_3x_4\sum_1^4 x_i^{-2}=0$
in $\P^3$.
The Mathieu actions of the two groups can be seen from this expression also.
See \cite[\S6]{Fields}.
\end{rmk}

\begin{rmk}\label{far from Mathieu}
The complete intersection of three diagonal quadrics 
$$
\begin{cases}
x_1^2 + x_3^2 + x_5^2  = x_2^2 + x_4^2 + x_6^2  \\
x_1^2 + x_4^2 = x_2^2 + x_5^2 = x_3^2 + x_6^2 
\end{cases}$$
in  $\P^5$  is given in \cite[(0.4)]{mukai88} as a (smooth) $K3$ surface with a symplectic action of  $H_{192}$.
The automorphism  $(x_i) \mapsto ((-1)^ix_i)$  is a free involution and commutes with the action.
But the induced semi-symplectic action of $H_{192}=C_2^4:D_{12}$ on the Enriques quotient is far from Mathieu.
In fact, any (diagonal) involution in $C_2^4$ is not Mathieu.
Hence any sub-action of $C_2 \times \mathfrak A_4$ or $C_4 \times C_2$ is not Mathieu neither.
\end{rmk}
\section{Semi-symplectic and Mathieu automorphisms}\label{preliminary}

Any Enriques surface is canonically doubly covered by a $K3$ surface. 
We always denote an Enriques surface by $S$ and the $K3$-cover by $X$. 
Let $\omega_X$ be a nowhere vanishing holomorphic $2$-form on $X$. An 
automorphism $\varphi$ is
{\em{symplectic}} if it preserves
the symplectic form $\omega_X$.
Equivalently, they are the elements in the kernel of the canonical representation $\Aut (X) \rightarrow \mathrm{GL} (H^0(\mathcal{O}_X (K_X)))$.
Along the same line of ideas, we define the following.

\begin{dfn}\label{semi-sympl}
Let $S$ be an Enriques surface.
An automorphism $\sigma\in \Aut (S)$ is {\em{semi-symplectic}} if it 
acts trivially on the space $H^0 (S, \mathcal{O}_S(2K_S))$.
\end{dfn}

The sections of $\mathcal{O}_S(2K_S)$ are identified with those of $\mathcal{O}_X(2K_X)$.
Recall that the covering involution $\varepsilon$ of $X/S$ negates $\omega_X$. 
Since for a given $\sigma\in \Aut(S)$ we have two lifts 
$\varphi_1$ and $\varphi_2=
\varphi_1 \varepsilon$ to automorphisms of $X$, we get the following proposition.

\begin{prop}\label{ssp}
$\sigma\in \Aut (S)$ is semi-symplectic if and only if one, say $\varphi_1$, of the two lifts 
is symplectic. Moreover, the other lift $\varphi_2=\varphi_1 \varepsilon$ negates $\omega_X$. 
\end{prop}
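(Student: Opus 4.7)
The plan is to reduce the statement to a scalar computation on the one-dimensional space $H^0(X, 2K_X) = \C \cdot \omega_X^2$, and then to compare the resulting scalars under the two lifts $\varphi_1, \varphi_2 = \varphi_1\varepsilon$ of $\sigma$.

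First I would record that, because $X$ is a $K3$ surface and $\varepsilon^*\omega_X = -\omega_X$, the bicanonical form $\omega_X^2$ is $\varepsilon$-invariant and therefore descends under the covering $\pi\colon X\to S$ to a generator of $H^0(S, 2K_S)$, which is one-dimensional thanks to $2K_S\sim 0$. This canonical identification $H^0(S, 2K_S)\simeq H^0(X, 2K_X)^{\varepsilon}$ lets me compute the action of any $\sigma\in \Aut(S)$ on $H^0(S, 2K_S)$ by lifting $\sigma$ to $X$ and reading off the action on $\omega_X^2$.

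Next, for each lift $\varphi_i$ I introduce the scalar $\lambda_i\in \C^*$ defined by $\varphi_i^*\omega_X = \lambda_i \omega_X$. A short computation using $\varphi_2 = \varphi_1\varepsilon$ together with $\varepsilon^*\omega_X = -\omega_X$ gives $\lambda_2 = -\lambda_1$, and the scalar by which $\sigma$ acts on the one-dimensional space $H^0(S, 2K_S)$ equals $\lambda_1^2 = \lambda_2^2$; this is well-defined independently of the choice of lift, as it should be, precisely because the sign ambiguity is squared away.

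The conclusion is then a tautology: $\sigma$ is semi-symplectic iff $\lambda_1^2 = 1$, iff $\lambda_1\in \{\pm 1\}$, and combined with $\lambda_2=-\lambda_1$ this is equivalent to exactly one of the two lifts being symplectic while the other negates $\omega_X$. This simultaneously yields the equivalence and the moreover statement. There is no real obstacle here; the only point requiring a little care is setting up the canonical identification $H^0(S, 2K_S)\simeq H^0(X, 2K_X)^{\varepsilon}$ so that the induced scalar is genuinely an invariant of $\sigma$ and not of an arbitrary chosen lift.
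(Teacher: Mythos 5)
Your argument is correct and is essentially the paper's own (the paper presents the proposition as an immediate consequence of the identification $H^0(S,2K_S)\simeq H^0(X,2K_X)^{\varepsilon}$ and the fact that $\varepsilon^*\omega_X=-\omega_X$, which is exactly the scalar computation you carry out). Your write-up simply makes explicit the bookkeeping $\lambda_2=-\lambda_1$ and the observation that $\lambda_1^2=1$ forces $\{\lambda_1,\lambda_2\}=\{1,-1\}$; nothing further is needed.
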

Since we can uniformly choose the symplectic lift, we have also
\begin{cor}\label{slift}
Let $G$ be a group of semi-symplectic automorphisms of $S$. Then the lifts of 
automorphisms in $G$ to the $K3$-cover $X$ constitute a group isomorphic to $G\times C_2$, where $C_2$ is generated by $\varepsilon$ and whose subgroup $G\times \{\id\}$
is the subgroup of symplectic automorphisms. 
\end{cor}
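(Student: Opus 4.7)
\medskip
The plan is to use Proposition \ref{ssp} to set up a canonical section of the lifting map, and then exploit the centrality of $\varepsilon$ to identify the full group of lifts as a direct product.

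\medskip
First, I would define a map $s\colon G\to \Aut(X)$ by sending each $\sigma\in G$ to its unique symplectic lift $\varphi_\sigma$; existence and uniqueness of this lift are guaranteed by Proposition \ref{ssp}, since the only two lifts of $\sigma$ are $\varphi_\sigma$ and $\varphi_\sigma\varepsilon$, and the latter is anti-symplectic. The key verification is that $s$ is a group homomorphism. For $\sigma,\tau\in G$, both $\varphi_\sigma\varphi_\tau$ and $\varphi_{\sigma\tau}$ are lifts of $\sigma\tau$ to $X$, and the former is symplectic because a composition of symplectic automorphisms preserves $\omega_X$. By the uniqueness part of Proposition \ref{ssp}, the two lifts must coincide, so $\varphi_{\sigma\tau}=\varphi_\sigma\varphi_\tau$. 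Moreover $s$ is injective because the only symplectic automorphism covering $\id_S$ is $\id_X$ (since the nontrivial deck transformation $\varepsilon$ is anti-symplectic). Thus $\widetilde G:=s(G)$ is a subgroup of $\Aut(X)$ isomorphic to $G$ consisting of symplectic automorphisms.

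\medskip
Next, I would describe the full set $L\subset \Aut(X)$ of lifts of elements of $G$: by Proposition \ref{ssp}, $L=\widetilde G \cup \widetilde G\varepsilon$. I claim $L$ is a subgroup. Since every element of $\widetilde G$ descends to $S$, it commutes with the deck transformation $\varepsilon$, so $\varepsilon$ is central in the subgroup generated by $\widetilde G$ and $\varepsilon$. This centrality immediately gives closure of $L$ under multiplication and inversion. Finally, since $\varepsilon$ is anti-symplectic, $\varepsilon\notin \widetilde G$, so $\widetilde G\cap\langle\varepsilon\rangle=\{\id\}$. Combined with centrality, this yields $L=\widetilde G\times\langle\varepsilon\rangle\cong G\times C_2$, with $\widetilde G=G\times\{\id\}$ being the symplectic subgroup as claimed.

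\medskip
The only subtle point is the homomorphism property of the section $s$, and it is handled by a one-line appeal to uniqueness of symplectic lifts; the rest is routine. No serious obstacle is expected.
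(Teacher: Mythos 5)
Your proof is correct and takes essentially the same route as the paper, which derives the corollary from Proposition \ref{ssp} with the one-line remark that the symplectic lift can be chosen uniformly; your section $s$ and the centrality of $\varepsilon$ are exactly the details being elided there.
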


\begin{example}\label{gssp}
Let $S$ be a generic Enriques surface in the sense of \cite{BP}. Then the whole automorphism 
group $\Aut (S)$ acts on $S$ semi-symplectically.
More generally, let $S$ be an Enriques surface whose covering $K3$ surface $X$ has the 
following genericity property:
\begin{itemize}
\item The transcendental lattice
$T_X$ of $X$, considered as a lattice equipped with Hodge structure,
has only automorphisms $\{\pm \id_{T_X}\}$.
\end{itemize}
Then the whole group $\Aut (S)$ acts on $S$ semi-symplectically. 
For example, this is the case when the Picard number $\rho (X)$ is odd,
since the value of the Euler function $\phi (n)$ is an even number for all $n\geq 3$
and by \cite{nikulin-auto}.
\end{example}

\subsection{Semi-symplectic automorphisms of finite order}
From now on, we study 
automorphisms of Enriques surfaces of finite order, 
which we simply call finite automorphisms as a matter of convenience.  
First we have the following criterion.

\begin{prop}\label{2356}
Let $\sigma$ be a finite automorphism of an Enriques surface $S$. If $\ord (\sigma )$ is not
divisible by $4$, then it is automatically semi-symplectic. 
\end{prop}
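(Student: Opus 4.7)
The plan is to lift $\sigma$ to an automorphism of the $K3$-cover $X$, reduce semi-symplecticity of $\sigma$ to a statement about the action on $\omega_X$, and then split into the case of an involution (routine) and that of an odd-order automorphism (the main obstacle). Let $\varphi\in\mathrm{Aut}(X)$ be a lift of $\sigma$; by construction $\varphi$ commutes with the covering involution $\varepsilon$, so the abelian group $\langle\varphi,\varepsilon\rangle$ has order $2n$ and is isomorphic to either $C_{2n}$ or $C_n\times C_2$. In the former case $\varphi^n=\varepsilon$, so any fixed point of $\varphi$ would also be fixed by $\varepsilon$, forcing $\varphi$ to be a fixed-point-free automorphism of order $2n$. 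But a finite-order free automorphism of a $K3$ surface has order at most $2$, since the smooth quotient $X/\langle\varphi\rangle$ satisfies $\chi(\mathcal O_{X/\langle\varphi\rangle})=2/\mathrm{ord}(\varphi)\in\mathbb Z$; hence for $n\geq 2$ we may take $\varphi$ to be of order exactly $n$ and to commute with $\varepsilon$.

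Let $\zeta\in\mathbb C^{\times}$ be determined by $\varphi^{*}\omega_X=\zeta\omega_X$. Then $\zeta^{n}=1$, and since $\varepsilon$ acts trivially on $\omega_X^{\otimes 2}$, both lifts of $\sigma$ induce the same scalar $\zeta^{2}$ on $H^0(S,2K_S)$; thus $\sigma$ is semi-symplectic if and only if $\zeta^2=1$. Write $n=2^am$ with $a\in\{0,1\}$ and $m$ odd. Since $\gcd(2^a,m)=1$, B\'ezout gives integers $u,v$ with $2^{a}u+mv=1$, and $\sigma=\tau_1\tau_2$ where $\tau_1:=(\sigma^m)^v\in\langle\sigma\rangle$ has order $2^a$ and $\tau_2:=(\sigma^{2^a})^u\in\langle\sigma\rangle$ has order $m$, with $\tau_1\tau_2=\tau_2\tau_1$. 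Because the assignment $\sigma\mapsto\zeta^2$ is a character of $\langle\sigma\rangle$, it is enough to show that each of $\tau_1$ and $\tau_2$ is semi-symplectic. The case of $\tau_1$ is immediate: either $\tau_1=\mathrm{id}$, or $\tau_1$ is an involution whose order-$2$ lift on $X$ satisfies $\zeta_1^{2}=1$ automatically.

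The hard part is the case of $\tau_2$ of odd order $m\geq 3$. Here I would take the order-$m$ lift $\psi$ of $\tau_2$ provided by the first paragraph and argue that $\psi$ is necessarily symplectic. Suppose not; then $\psi$ acts on $H^0(X,K_X)$ by a primitive $d$-th root of unity for some odd $d\mid m$ with $d\geq 3$, so replacing $\psi$ by a suitable power we may assume $d$ is an odd prime. Now $\psi$ commutes with the free involution $\varepsilon$, so $\varepsilon$ acts on $\mathrm{Fix}(\psi)$: isolated fixed points must come in pairs, and each fixed curve must admit a free involution, hence have geometric genus $\geq 1$. The plan is to compare these constraints with the classification of fixed loci of non-symplectic automorphisms of $K3$ surfaces of odd prime order (Nikulin, and Artebani--Sarti--Taki in the prime case), which forces the presence of a smooth rational fixed curve or of an odd number of isolated fixed points---either possibility contradicting the $\varepsilon$-equivariance. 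This contradiction yields $\zeta_2=1$, completing the proof; the analysis of the odd-prime fixed locus is the step that will require the most care.
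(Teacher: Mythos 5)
Your reduction steps are fine and essentially match the paper for the easy part: the exclusion of a lift of order $2n$ via $\chi(\mathcal O)$ of an \'etale quotient, the observation that $\sigma$ is semi-symplectic iff $\zeta^2=1$, the splitting of $\sigma$ into its $2$-part and odd part, and the involution case. The genuine gap is the odd-order case, which is the entire content of the proposition beyond order $2$, and which you leave as a plan rather than a proof. Two concrete problems with that plan. First, the dichotomy you want to read off from the classification of non-symplectic automorphisms of odd prime order --- that $\Fix(\psi)$ must contain a smooth rational fixed curve or an odd number of isolated fixed points --- is not a statement of that classification; you give no argument for it, and the tables contain fixed loci consisting of points together with curves of positive genus, so it cannot simply be cited. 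Second, even where the fixed-locus data is favorable, your contradiction mechanism leaks: two rational fixed curves interchanged by $\varepsilon$ produce no contradiction (you only rule out an $\varepsilon$-\emph{invariant} rational curve), and the correct constraint on an $\varepsilon$-invariant fixed curve is that it admits a free involution, which forces \emph{odd} genus $g=2g'-1$, not merely $g\ge 1$ as you state --- so an invariant curve of genus $3$ or $5$ escapes your criterion entirely, while one of genus $2$ or $4$ is excluded but not by the criterion you wrote. Closing this route would require a case-by-case check over all odd primes $p\le 19$ with a sharper equivariance analysis, and it is not clear it closes.

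The paper settles the odd case with a short self-contained argument that needs no classification: if the order-$n$ lift $\varphi$ ($n$ odd) were non-symplectic, the quotient $Y=X/\varphi$ would satisfy $\chi(\mathcal O_Y)=1$, so the involution $\varepsilon_Y$ induced on $Y$ cannot act freely (otherwise $\chi(\mathcal O_{Y/\varepsilon_Y})=1/2$); but the fiber of $X\to Y$ over a fixed point of $\varepsilon_Y$ has odd cardinality (it is an orbit of the odd-order group $\langle\varphi\rangle$) and carries a free action of the involution $\varepsilon$, which is impossible. You should replace your third paragraph with an argument of this kind, or actually carry out the case analysis you defer.
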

\begin{proof}
We separately argue two cases (1) $\ord (\sigma)=2$ and (2) $\ord (\sigma)$ is odd.
The rest is easily deduced.\\
(1) Let $\varphi$ be one of two lifts to $X$.
Then $\varphi^2$ is a lift of $\id_S$ hence is either $\id_X$ or $\varepsilon$. 
In the former case, either $\varphi$ or $\varphi \varepsilon$ is symplectic on $X$ and we are done 
by Proposition \ref{ssp}. 
Suppose the latter occurs. Then $\varphi$ is an automorphism of order $4$ on $X$ 
which has no nontrivial stabilizer subgroup over $X$. Thus $X\rightarrow X/\varphi =:Y$
is an \'{e}tale covering of degree $4$. But then $\chi (\mathcal{O}_{Y})=
\chi (\mathcal{O}_X)/4=1/2$ must be an integer, a contradiction. \\
(2) Let $n$ be the order of $\sigma$. Arguing as in (1), we see that the two lifts of $\sigma$ 
have orders $n$ and $2n$. We denote by $\varphi$ the one with order $n$ and we show
that it is symplectic. Suppose that $\varphi$ is non-symplectic. 
Then $Y:=X/\varphi$ is a rational surface with at most quotient singularities.
The free involution $\varepsilon$ induces an involution $\varepsilon_Y$ on $Y$, which
must have a fixed point $P\in Y$ because $\chi (\mathcal{O}_{Y})=1$. 
Let $\pi_Y \colon X\rightarrow Y$ be the quotient morphism. Then the fiber $\pi_Y^{-1} (P)$ consists of 
odd number of points and has an action by $\varepsilon$ by construction. 
However this is impossible since $\varepsilon$ is fixed-point-free of order $2$.
Thus $\varphi$ is symplectic.
\end{proof}

\begin{rmk} The case (2) in the proof above also shows that if $\sigma$ is an automorphism of odd order, 
then $S/\sigma\simeq X/\langle \varphi, \varepsilon\rangle$ is an Enriques surface (with singularities in general).
\end{rmk}


By \cite{nikulin-auto, mukai88}, a finite symplectic automorphism $\varphi$ of a $K3$ surface
has  $\ord (\varphi)\leq 8$ and the following table holds.
\begin{equation}\label{m24}
\begin{tabular}{c|ccccccc}
{\rm order of }$\varphi$ &2&3&4&5&6&7&8\\ \hline
{\rm number of fixed points} &8&6&4&4&2&3&2
\end{tabular}
\end{equation}
\begin{cor}\label{atmost6}
A finite semi-symplectic automorphism $\sigma$ has $\mathrm{ord}(\sigma)\leq 6$.
\end{cor}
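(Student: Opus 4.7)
The plan is to lift $\sigma$ to a symplectic automorphism $\varphi$ of the $K3$-cover $X$ via Proposition \ref{ssp}, apply the Nikulin--Mukai bound $\ord(\varphi)\le 8$ from \eqref{m24}, and then rule out the two remaining cases $\ord(\sigma)\in\{7,8\}$ by a local analysis at $\Fix(\varphi)$.

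For the general setup: since $\varphi$ lifts a well-defined element of $\Aut(S)$, it normalizes $\langle\varepsilon\rangle$ and therefore commutes with $\varepsilon$. Because $\varphi^n$ descends to $\id_S$, it lies in $\{\id_X,\varepsilon\}$, so $\ord(\varphi)\in\{n,2n\}$; combined with $\ord(\varphi)\le 8$ from \eqref{m24}, this yields $n\le 8$.

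To exclude $n=7$: then $\ord(\varphi)=7$ (since $14>8$) and $\Fix(\varphi)$ has exactly three points by \eqref{m24}. As $\varepsilon$ commutes with $\varphi$ and is fixed-point-free, it would act without fixed points on a three-element set, which is impossible.

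The main obstacle is the case $n=8$. Then $\ord(\varphi)=8$ and $\Fix(\varphi)=\{P_1,P_2\}$. Symplecticity forces $d\varphi|_{T_{P_i}X}=\mathrm{diag}(\zeta_8^{a_i},\zeta_8^{-a_i})$ for some $a_i\in\{1,3\}$. The holomorphic Lefschetz fixed point formula applied to $\mathcal O_X$ gives
\[
2=\chi(\varphi,\mathcal O_X)=\sum_{i=1}^{2}\frac{1}{(1-\zeta_8^{a_i})(1-\zeta_8^{-a_i})},
\]
and a direct computation shows the right-hand side equals $2\pm\sqrt{2}$ when $a_1=a_2$ and equals $2$ only when $\{a_1,a_2\}=\{1,3\}$. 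Hence the eigenvalue sets of $d\varphi$ at $P_1$ and at $P_2$ differ. On the other hand, $\varepsilon$ commutes with $\varphi$ and is free, so it must swap $P_1$ and $P_2$; then $d\varepsilon_{P_1}$ conjugates $d\varphi_{P_1}$ to $d\varphi_{P_2}$, forcing the two eigenvalue sets to coincide. This contradiction excludes $n=8$, so $n\le 6$.
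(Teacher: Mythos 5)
Your proof is correct and follows essentially the same route as the paper: lift to a symplectic automorphism of the $K3$-cover, invoke the order bound from \eqref{m24}, kill order $7$ by parity of $\#\Fix(\varphi)$ under the free involution $\varepsilon$, and kill order $8$ by using the holomorphic Lefschetz formula to show the two local linearizations $\diag(\zeta_8,\zeta_8^7)$ and $\diag(\zeta_8^3,\zeta_8^5)$ cannot be conjugated by $d\varepsilon$. The only cosmetic difference is that you re-derive $\ord(\varphi)\in\{n,2n\}$ by hand where the paper simply cites Corollary \ref{slift} for a lift of the same order.
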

\begin{proof}
By Corollary \ref{slift}, $\sigma$ has a symplectic lift $\varphi$ of the same order, 
hence the order is at most $8$.
If $\mathrm{ord}(\varphi)=7$, then $\varepsilon$ cannot act freely on the 
fixed point set $\Fix(\varphi)$ by \eqref{m24}, a contradiction. If $\mathrm{ord}(\varphi)=8$, 
$\varphi$ has exactly two fixed points $P$ and $Q$ and they are exchanged by $\varepsilon$.
However, by applying the holomorphic Lefschetz formula, the local linearized actions 
$(d\varphi)_P, (d\varphi)_Q$ are given by 
\[\begin{pmatrix} \zeta_8 & 0 \\ 0 & \zeta_8^7\end{pmatrix} \text{ and }
\begin{pmatrix} \zeta_8^3 & 0 \\ 0 & \zeta_8^5\end{pmatrix}\ 
\text{ where }\zeta_8=e^{2\pi\sqrt{-1}/8}.
\]
These matrices are not conjugated by $d\varepsilon$ and we get a contradiction.
Therefore we see that $\mathrm{ord}(\sigma)\leq 6$.
\end{proof}

Next we want to look more closely at the fixed point set $\Fix(\sigma)$. 
Let $\varphi$ be the symplectic lift and $\varphi\varepsilon$ be the non-symplectic one. 
From the relation $S=X/\varepsilon$, $\Fix(\sigma)$ has the decomposition
$\Fix(\sigma)=\Fix^+(\sigma) \sqcup \Fix^-(\sigma),$ where $\Fix^+ (\sigma)= \Fix(\varphi)/\varepsilon$ is the set of {\em{symplectic}} fixed points and 
$\Fix^- (\sigma)=\Fix(\varphi\varepsilon)/\varepsilon$ is the set of 
{\em{anti-symplectic}} fixed points. Geometrically, they are distinguished by 
the determinant ($=\pm 1$) of the local linearized action $(d\sigma)_P$. 

The number of symplectic fixed points is deduced from the table \eqref{m24}
by $\#\Fix^+ (\sigma)=\#\Fix (\varphi)/2$.
On the other hand, the anti-symplectic fixed point set $\Fix^-(\sigma)$ has a variation.
Here we use the topological Lefschetz formula 
\begin{equation}\label{lefschetz}
\sum_{i=0}^4 (-1)^i \tr (\sigma^*\mid_{H^i(S, \mathbb{Q})})=\e (\Fix (\sigma )).
\end{equation}
to give a rough classification\footnote{For a detailed classification 
of involutions, see \cite{IO}.}.
The quantity (\ref{lefschetz}) is called the {\em{Lefschetz number}} and 
denoted by $L(\sigma)$.

\begin{prop}\label{chara}
Let $\sigma$ be a semi-symplectic automorphism of order $n\leq 6$.
\begin{itemize}
\item If $n=2$, then $\Fix^- (\sigma)$ is a disjoint union of smooth curves.  
The Lefschetz number $L(\sigma)$ takes (every) even number from $-4$ to $12$. 
\item If $n=3$, then $\Fix^- (\sigma)= \emptyset$ and we have $L(\sigma)=3$. 
\item If $n=4$, then $\Fix^- (\sigma)$ is either empty or $2$ points. Accordingly 
$L(\sigma)$ equals either $2$ or $4$.
\item If $n=5$, then $\Fix^- (\sigma)= \emptyset$ and we have $L(\sigma)=2$. 
\item If $n=6$, then $\Fix^- (\sigma)$ is either empty or $2$ points. Accordingly 
$L(\sigma)$ equals either $1$ or $3$.
\end{itemize}
\end{prop}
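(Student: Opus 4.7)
The plan is to combine Corollary~\ref{slift} with table~\eqref{m24} and the freeness of $\varepsilon$. Writing $\varphi$ for the symplectic lift of $\sigma$ to the $K3$-cover $X$ (of the same order $n$), the non-symplectic lift is $\varphi\varepsilon$, and the commutativity $\varphi\varepsilon=\varepsilon\varphi$ together with $\varepsilon^{2}=1$ yields $(\varphi\varepsilon)^{n}=\varepsilon^{n}$; thus $\ord(\varphi\varepsilon)=n$ for even $n$ and $2n$ for odd $n$. The fixed set decomposes as $\Fix(\sigma)=\Fix^{+}(\sigma)\sqcup\Fix^{-}(\sigma)$ with $\Fix^{+}(\sigma)=\Fix(\varphi)/\varepsilon$ and $\Fix^{-}(\sigma)=\Fix(\varphi\varepsilon)/\varepsilon$, on each of which $\varepsilon$ acts freely; hence $|\Fix^{+}(\sigma)|=|\Fix(\varphi)|/2$, giving $4,3,2,2,1$ for $n=2,3,4,5,6$ by table~\eqref{m24}.

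For $n=3,5$ (odd), one has $(\varphi\varepsilon)^{n}=\varepsilon$, which is fixed-point-free; hence $\Fix(\varphi\varepsilon)=\emptyset$ and $L(\sigma)=|\Fix^{+}(\sigma)|=3$ or $2$.

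For $n=4$ and $n=6$, I would use $(\varphi\varepsilon)^{2}=\varphi^{2}$ to conclude $\Fix(\varphi\varepsilon)\subset\Fix(\varphi^{2})$, a finite set of $8$ resp.\ $6$ points by table~\eqref{m24}. The $\varphi$-fixed points among these cannot lie in $\Fix(\varphi\varepsilon)$ (otherwise $\varepsilon$ would fix them), leaving exactly $4$ remaining points in both cases, and these form two $\varphi$-orbits $\{P_{i},\varphi(P_{i})\}$ ($i=1,2$) of size two. Since $P\in\Fix(\varphi\varepsilon)$ iff $\varepsilon(P)=\varphi(P)$, the question is whether $\varepsilon$ preserves the $\varphi$-orbits: as $\varepsilon$ acts on these $4$ points as a fixed-point-free involution, its two orbits of size two either coincide with the two $\varphi$-orbits or split each one across them. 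Consequently $|\Fix(\varphi\varepsilon)|\in\{0,4\}$, $|\Fix^{-}(\sigma)|\in\{0,2\}$, and $L(\sigma)\in\{2,4\}$ for $n=4$ and $L(\sigma)\in\{1,3\}$ for $n=6$.

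For $n=2$, $\varphi\varepsilon$ is a non-symplectic involution of the $K3$ surface $X$, and hence $\Fix(\varphi\varepsilon)$ is a disjoint union of smooth curves by Nikulin's classical theorem; quotienting by the free $\varepsilon$-action yields the same for $\Fix^{-}(\sigma)$. To pin down the range of $L(\sigma)$, I would decompose $H^{2}(X,\Q)$ into the four joint eigenspaces of the commuting involutions $\varphi$ and $\varepsilon$, with dimensions $a,b,c,d$. Combining $\tr(\varphi^{*}|H^{2}(X))=6$ (symplectic involution on $K3$) and $\tr(\varepsilon^{*}|H^{2}(X))=-2$ (free Enriques involution) with $a+b+c+d=22$ forces $a=d+2$, so that $r_{+}:=\dim H^{2}(X)^{\varphi\varepsilon=+1}=a+d=2d+2$ is even with $d\in\{0,\ldots,8\}$; hence $L(\sigma)=4+L(\varphi\varepsilon)/2=r_{+}-6$ runs over the even integers in $[-4,12]$. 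The main obstacle is then to realize each such value by an explicit example, which is the classification carried out in detail in \cite{IO} to which the paper already defers.
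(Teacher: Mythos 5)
Your proposal is correct and follows essentially the same route as the paper: the decomposition $\Fix(\sigma)=\Fix^+(\sigma)\sqcup\Fix^-(\sigma)$ with $\#\Fix^+(\sigma)=\#\Fix(\varphi)/2$ from table~\eqref{m24}, the inclusion $\Fix(\varphi\varepsilon)\subset\Fix(\varphi^2)\setminus\Fix(\varphi)$ giving the all-or-nothing dichotomy on the four remaining points for $n=4,6$, an eigenvalue count on $H^2$ for the range of $L(\sigma)$ when $n=2$, and deferral to \cite{IO} for the realization of every value. The only cosmetic differences are that you rule out anti-symplectic fixed points for odd $n$ via the global identity $(\varphi\varepsilon)^n=\varepsilon$ rather than the paper's local argument $((d\sigma)_P)^n\neq 1$, and you phrase the $n=2$ count via joint eigenspaces of $\varphi$ and $\varepsilon$ instead of the $8$-dimensional negative eigenspace of $\varphi^*$.
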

\begin{proof}
We have only to compute $\Fix^- (\sigma)$ in each case.

($n=2$) The first assertion follows since the local action $(d \sigma)_P$ at $P\in \Fix^- (\sigma)$
is of the form $\diag (1,-1)$.
On the other hand, the action $\sigma^* \curvearrowright H^2 (S, \mathbb{Q})$
is identified with $\varphi^* \curvearrowright H^2(X, \mathbb{Q})_{\varepsilon^* =1}$, 
the invariant subspace with respect to $\varepsilon^*$.
It is known (also easily deduced from Table (\ref{m24}) 
via Lefschetz formula) that 
the action $\varphi^*$ has an $8$-dimensional negative eigenspace on $H^2(X,\Q)$.
Therefore, by counting the eigenvalues, we get the assertion (2). 
The existence of every value is shown in \cite{IO}.

($n=3,5$)  If the order $n$ is odd, $((d\sigma)_P)^n\neq 1$ for $P\in \Fix^-(\sigma)$. 
Therefore there are no anti-symplectic points.

($n=4,6$) Since $(\varphi \varepsilon)^2=\varphi^2$, we have
$\Fix (\varphi \varepsilon)\subset \Fix (\varphi^2)$. Since $\varepsilon$ is free, we get 
$\Fix (\varphi \varepsilon)\subset \Fix (\varphi^2)-\Fix (\varphi)$.
This latter set $T$ consists of $4$ points in both cases $n=4,6$ and
$\varphi$ and $\varepsilon$ both acts freely of order $2$ on $T$. 
Thus we see that the fixed points of $\varphi \varepsilon$ are either whole $T$ or empty.
\end{proof}

\subsection{Mathieu automorphisms}\label{ll}

The left-hand-side of \eqref{lefschetz} can be regarded as the character of the representation
$\sigma^* \curvearrowright H^*(S,\Q)$ since odd dimensional (rational) cohomology vanishes.
In contrast to symplectic automorphisms of $K3$ surfaces, 
Proposition \ref{chara} shows that we cannot treat semi-symplectic automorphisms 
uniformly from the viewpoint of characters. Nevertheless, we can make the following 
definition.

\begin{dfn}\label{Maction}
\begin{enumerate}
\item[(1)] A $12$-dimensional representation $V$ of a finite group $G$ over a field of characteristic zero is called a {\em{small Mathieu representation}} 
if its character $\mu (g)$ depends only on $\ord (g)$ and 
coincides with that of (the permutation representation on $\Omega_+$ of)
the small Mathieu group $M_{11}$.
\item[(2)] Let $G$ be a finite group of automorphisms of an Enriques surface $S$.
The action is called {\em{Mathieu}} if it is semi-symplectic and the induced representation 
$G \curvearrowright H^*(S,\Q)$ is a small Mathieu representation.
\end{enumerate}
\end{dfn}
For the characters of $M_{11}$, see ($\ast\ast$) in Introduction. 
We note that $G$ acts on $S$ effectively and without 
numerically trivial automorphisms if it is Mathieu. 
Comparing ($\ast\ast$) and Proposition \ref{chara}, we see that Mathieu 
condition has effects only
on elements of even orders. A little stronger statement holds as follows.
\begin{lem}\label{criterion}
A semi-symplectic group action of $G$ on an Enriques surface $S$ is Mathieu if for every element 
$\sigma$ of order $2$ or $4$, we have $L(\sigma)=4$.
\end{lem}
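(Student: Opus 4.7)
My plan is to split the argument by order of $\sigma$. By Corollary~\ref{atmost6}, every semi-symplectic element of $G$ has order at most $6$, so it suffices to check that $L(\sigma)$ matches the lower row of $(\ast\ast)$ for each order in $\{1,2,3,4,5,6\}$. Proposition~\ref{chara} already forces $L(\sigma)=12,3,2$ for orders $1,3,5$ respectively, and the hypothesis directly gives $L(\sigma)=4$ for orders $2$ and $4$. The only nontrivial case is $\mathrm{ord}(\sigma)=6$: Proposition~\ref{chara} only yields the dichotomy $L(\sigma)\in\{1,3\}$, while the Mathieu table demands $L(\sigma)=1$.

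For the order-$6$ case I would use character theory on the cyclic subgroup $\langle\sigma\rangle\simeq C_6$. Restrict the $12$-dimensional rational $G$-representation $H^*(S,\Q)$ to $\langle\sigma\rangle$ and let $n_j$ be the multiplicity of $\zeta_6^{\,j}$ as an eigenvalue of $\sigma$ after base change to $\C$ ($j=0,\ldots,5$). Rationality forces $n_1=n_5$ and $n_2=n_4$. Combining this with $L(1)=12$, $L(\sigma^2)=3$ (Proposition~\ref{chara} applied to the order-$3$ element $\sigma^2$), and $L(\sigma^3)=4$ (the hypothesis applied to the order-$2$ element $\sigma^3$) yields the linear system
\begin{align*}
n_0+2n_1+2n_2+n_3 &= 12,\\
(n_0+n_3)-(n_1+n_2) &= 3,\\
n_0-2n_1+2n_2-n_3 &= 4.
\end{align*}
Solving gives $n_1+n_2=3$, $n_0=2+2n_1$, $n_3=4-2n_1$, and hence
\[
L(\sigma) = n_0+n_1-n_2-n_3 = -5 + 6n_1.
\]
Non-negativity of $n_3$ restricts $n_1\in\{0,1,2\}$, so $L(\sigma)\in\{-5,1,7\}$. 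Intersecting with the geometric dichotomy $L(\sigma)\in\{1,3\}$ from Proposition~\ref{chara} leaves only $L(\sigma)=1$, exactly as required.

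The only substantive step is this order-$6$ case; everything else is an immediate citation. The key observation powering the computation is that integrality of the eigenvalue multiplicities $n_j$ in a rational representation imposes a mod-$6$ congruence on $L(\sigma)$ which, while by itself admitting several values, is sharp enough when intersected with Proposition~\ref{chara} to exclude the spurious value $L(\sigma)=3$.
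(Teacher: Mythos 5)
Your proof is correct and follows essentially the same route as the paper: the paper's own argument likewise reduces to the order-$6$ case, introduces the eigenvalue multiplicities $a_k$ of $\sigma^*$ on $H^*(S,\Q)$, imposes $a_1=a_5$, $a_2=a_4$ from rationality together with $L(\sigma^2)=3$ and $L(\sigma^3)=4$, and intersects with the dichotomy $L(\sigma)\in\{1,3\}$ from Proposition~\ref{chara} to conclude $L(\sigma)=1$. Your explicit parametrization $L(\sigma)=-5+6n_1$ is just a slightly more transparent way of exhibiting the unique admissible solution the paper records.
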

\begin{proof}
We prove that under the condition, any element $\sigma\in G$ of order $6$ 
have $L(\sigma)=1$. 
Let $a_k\ (k=0,\cdots, 5)$ be the number of the eigenvalue $e^{2\pi k\sqrt{-1}/6}$
in the representation 
$\sigma^* \curvearrowright H^* (S, \mathbb{Q})$.
Since $\e (S)=12$ and the representation is over $\Q$, we have 
\[a_0 +\cdots +a_5=12,\quad a_1=a_5,\quad  a_2=a_4.\]
By assumption, we also have $L(\sigma^2)=3$ and $L(\sigma^3)=4$, which translates into
\[a_0-a_1-a_2+a_3=3,\quad a_0-2a_1+2a_2-a_3=4.\] 
On the other hand, by Proposition \ref{chara} (6), 
\[L(\sigma)=a_0+a_1-a_2-a_3=1\text{ or }3.\]
The only integer solution to these equations is given by $a_0=4, a_1=a_5=1,
a_2=a_4=2,a_3=2$. Therefore we get $L(\sigma)=1$. 
\end{proof}
By Proposition \ref{chara} and Definition \ref{Maction}, we have
\begin{prop}\label{local}
Let $\sigma$ be a Mathieu automorphism of an Enriques surface of order $n\geq 2$.
Then the fixed locus $\Fix (\sigma)=\Fix^+(\sigma)\sqcup \Fix^-(\sigma)$ is as follows.
\begin{center}
\begin{tabular}{c|ccccc}
$n$ & $2$ & $3$ & $4$ & $5$ & $6$ \\ \hline
$\Fix^+(\sigma)$ & $4$ pts. & $3$ pts. & $2$ pts. & $2$ pts. & $1$ pts.\\ \hline
$\Fix^-(\sigma)$ & $\sqcup_i C_i$ & $\emptyset$ & $2$ pts. & $\emptyset$ & $\emptyset$
\end{tabular}
\end{center}
Here $\sqcup_i C_i$ is a disjoint union of smooth curves whose Euler number $\sum_i \e (C_i)$ is zero.
\end{prop}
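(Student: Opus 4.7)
The plan is to combine Proposition \ref{chara}, which already constrains the possibilities for $\Fix^-(\sigma)$ case by case, with the Mathieu hypothesis on Lefschetz numbers via the topological Lefschetz formula \eqref{lefschetz}. The computation of $\Fix^+(\sigma)$ is a direct consequence of the classification \eqref{m24} of fixed points of symplectic automorphisms of $K3$ surfaces.

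First I would invoke Corollary \ref{slift} to get a symplectic lift $\varphi$ of $\sigma$ of the same order $n$ on the $K3$ cover $X$, so that $\Fix^+(\sigma) = \Fix(\varphi)/\varepsilon$. Since $\varepsilon$ acts freely on $X$ (and in particular on $\Fix(\varphi)$, once one checks $\varepsilon \cdot \Fix(\varphi) = \Fix(\varphi)$ using that $\varphi$ and $\varepsilon$ commute), we have $\#\Fix^+(\sigma) = \#\Fix(\varphi)/2$. Reading off the values from table \eqref{m24} for $n=2,3,4,5,6$ gives exactly $4,3,2,2,1$, which matches the first row of the table to be proved.

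Next I would determine $\Fix^-(\sigma)$. For $n=3$ and $n=5$ the claim is immediate from Proposition \ref{chara}. For the remaining cases I would use the Mathieu Lefschetz numbers $L(\sigma)$ prescribed by table $(\ast\ast)$ together with the formula $L(\sigma) = \e(\Fix^+(\sigma)) + \e(\Fix^-(\sigma))$:
\begin{itemize}
\item[$n=4$:] Proposition \ref{chara} allows $\Fix^-(\sigma) = \emptyset$ or $2$ points, with respective Lefschetz numbers $2$ or $4$. The Mathieu value is $L(\sigma)=4$, so $\Fix^-(\sigma)$ consists of $2$ points.
\item[$n=6$:] Proposition \ref{chara} allows $\Fix^-(\sigma) = \emptyset$ or $2$ points, with respective Lefschetz numbers $1$ or $3$. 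The Mathieu value is $L(\sigma)=1$, so $\Fix^-(\sigma) = \emptyset$.
\item[$n=2$:] Proposition \ref{chara} tells us $\Fix^-(\sigma) = \sqcup_i C_i$ is a disjoint union of smooth curves, and the Mathieu value $L(\sigma)=4$ together with $\e(\Fix^+(\sigma))=4$ forces $\sum_i \e(C_i) = 0$.
\end{itemize}

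There is no real obstacle here: the proposition is essentially a bookkeeping combination of already proved results (Corollary \ref{slift}, the $K3$ table \eqref{m24}, Proposition \ref{chara}, and Definition \ref{Maction}), with the Lefschetz formula supplying the one arithmetic identity needed in each case. The only small verification worth noting is that $\varepsilon$ preserves $\Fix(\varphi)$ and acts freely on it, which follows since $\varphi$ and $\varepsilon$ commute and $\varepsilon$ has no fixed points on $X$; this legitimates dividing the $K3$ fixed point counts by $2$ to obtain $\#\Fix^+(\sigma)$.
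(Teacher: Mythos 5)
Your proposal is correct and follows essentially the same route as the paper, which derives the proposition directly from Proposition \ref{chara}, the table \eqref{m24} via the symplectic lift (giving $\#\Fix^+(\sigma)=\#\Fix(\varphi)/2$), and the Mathieu values of $L(\sigma)$ from Definition \ref{Maction}. All the case-by-case arithmetic checks out.
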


\section{Finite groups with small Mathieu representations}\label{sMrep}

In this section we prove 
\begin{prop}\label{orders}
Let $G$ be a finite group which has a small Mathieu representation $V$ with character $\mu$.
Then the order of $G$ is
\[ 2^{a_2} 3^{a_3} 5^{a_5} 11^{a_{11}}\]
for non-negative integers $a_2\leq 4$, $a_3\leq 2$, $a_5, a_{11}\leq 1$.
Moreover, this bound is sharp since $M_{11}$ has the order 
$7920=2^4\cdot 3^2\cdot 5\cdot 11$.
\end{prop}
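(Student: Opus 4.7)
The plan is to restrict $V$ to Sylow subgroups and exploit integrality of character inner products.

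First, the definition of a small Mathieu representation forces every $g\in G$ to have order in $\{1,2,3,4,5,6,8,11\}$, so by Cauchy's theorem the prime divisors of $|G|$ lie in $\{2,3,5,11\}$.

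For each such prime $p$ I would restrict $V$ to a Sylow $p$-subgroup $P_p$. If $p\in\{3,5,11\}$, then $p^2$ is not an allowed order, so $P_p$ has exponent $p$ and $\mu|_{P_p}(g)=c_p$ for every nontrivial $g$, where $c_3=3$, $c_5=2$, $c_{11}=1$. Integrality of
\[\langle \mu|_{P_p},\,1_{P_p}\rangle \;=\; c_p+\frac{12-c_p}{|P_p|}\]
forces $|P_p|$ to divide $12-c_p\in\{9,10,11\}$, yielding $a_3\le2$, $a_5\le1$, $a_{11}\le1$.

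The main obstacle will be $p=2$, where $P_2$ has exponent dividing $8$ but the integrality of $\langle\mu,1\rangle$ alone leaves $|P_2|$ unbounded. My remedy is to invoke the second-moment integrality $\langle\mu,\mu\rangle\in\Z$ in tandem. Writing $n_k$ for the number of elements of $P_2$ of order $2^k$, the character values $(12,4,4,2)$ on orders $(1,2,4,8)$ give
\[\langle\mu,1\rangle=\frac{12+4(n_1+n_2)+2n_3}{|P_2|},\qquad \langle\mu,\mu\rangle=\frac{144+16(n_1+n_2)+4n_3}{|P_2|},\]
so $\langle\mu,\mu\rangle-4\langle\mu,1\rangle = 4(24-n_3)/|P_2|$ must also be an integer. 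Using $n_1+n_2=|P_2|-1-n_3$ and $0\le n_3\le|P_2|-1$, one checks that the first integrality forces $\langle\mu,1\rangle\in\{3,4\}$ whenever $|P_2|\ge16$, which pins $n_3$ down to either $4$ or $4+|P_2|/2$. In both cases the divisibility $|P_2|\mid 4(24-n_3)$ collapses to $|P_2|\mid 80$, so as a power of $2$ we get $|P_2|\le16$, i.e.\ $a_2\le4$.

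Sharpness is immediate: the permutation representation $M_{11}\act\Omega_+$ is itself a small Mathieu representation, and $|M_{11}|=7920=2^4\cdot3^2\cdot5\cdot11$ simultaneously attains all four upper bounds.
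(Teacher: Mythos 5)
Your proof is correct, and while the odd-prime part coincides with the paper's argument (restrict to a Sylow $p$-subgroup, note it has exponent $p$, and use integrality of $\dim V^{G_p}=c_p+(12-c_p)/|G_p|$), your treatment of $p=2$ is genuinely different. The paper attacks the $2$-Sylow subgroup structurally: it first checks by hand that no abelian group of order $2^4$ admits a small Mathieu representation, then takes a maximal normal abelian subgroup $A$ of $G_2$, uses $C_{G_2}(A)=A$ to embed $G_2/A$ into $\Aut(A)$, and runs a case analysis over $A\simeq C_8$, $C_4\times C_2$, $C_2^3$, in each case producing an explicit element $x$ with $\mu(\langle A,x\rangle)\notin\Z$. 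You replace all of this by a second integrality constraint, $\langle\mu,\mu\rangle\in\Z$, combined linearly with $\langle\mu,1\rangle\in\Z$ so as to eliminate the unknown $n_1+n_2$: I verified that $\langle\mu,1\rangle=4+(8-2n_3)/|P_2|$, that for $|P_2|\ge16$ the integer $(8-2n_3)/|P_2|$ must be $0$ or $-1$ (giving $n_3=4$ or $n_3=4+|P_2|/2$), and that in either case $|P_2|\mid 4(24-n_3)$ reduces to $|P_2|\mid 80$, hence $|P_2|\le 16$. All steps check out. Your route is shorter and purely character-theoretic, avoiding any classification of $2$-groups; the paper's route is longer but yields structural information about which $2$-groups can carry such a representation, which is in the spirit of the finer classification carried out later in Section 6. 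For the statement as given, your proof is complete.
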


The definition of a small Mathieu representation is in Definition \ref{Maction}.
In what follows we use the notation $\mu(G)=(\sum_{g\in G} \mu (g))/|G|$.
From the theory of characters, we have $\dim V^G=\mu(G)$. In particular, 
\begin{equation}\label{dimension}
\mu(G) \text{is a non-negative integer.}
\end{equation}
Note that a subgroup $H$ inherits a small Mathieu representation and we have 
$\mu(H)\geq \mu(G)$. 
Also for a normal subgroup $N$, we can define the function $\mu$ on $G/N$ and 
we have $\mu (G/N)\geq \mu(G)$. The equality holds if and only if $N=\{1\}.$
%
\begin{proof} We begin the proof of Proposition \ref{orders}.
From Table ($\ast\ast$), every element $g\in G$ has $\ord (g)\in \{1,2,3,4,5,6,8,11\}$.
For a prime number $p$, let $G_p$ be a Sylow $p$-subgroup of $G$.
For odd $p$, $G_p$ does not contain elements of order $p^2$. Hence by \eqref{dimension},
\[\mu(G_p)=(12+e_p(|G_p|-1))/|G_p|\in \Z \quad (e_3=3,e_5=2,e_{11}=1)\]
shows that $|G_3|\leq 3^2, |G_5|\leq 5, |G_{11}|\leq 11$.

In the rest, let us replace $G$ by $G_2$ and show $|G|\leq 2^4$ for $p=2$. 
\begin{lem}\label{ab2^4}
Abelian groups of order $2^4$ have no small Mathieu representations.
\end{lem}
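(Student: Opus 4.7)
The plan is to enumerate the five abelian groups of order $2^4$ and rule each one out. Up to isomorphism they are
\[
C_{16},\ C_8\times C_2,\ C_4\times C_4,\ C_4\times C_2^2,\ C_2^4.
\]

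First I will dispose of $C_{16}$ at once: it contains an element of order $16$, which is not among the orders $\{1,2,3,4,5,6,8,11\}$ appearing in the character table $(\ast\ast)$ of $M_{11}$, so by Definition \ref{Maction}(1) a small Mathieu representation of $C_{16}$ cannot exist.

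For the remaining four groups my strategy is to compute $\mu(G)=\dim V^G$ directly from $|G|=16$ and the character values $\mu(1)=12$, $\mu(2)=\mu(4)=4$, $\mu(8)=2$ read off from $(\ast\ast)$, and then invoke the integrality recorded in \eqref{dimension} to reach a contradiction. When the exponent of $G$ is at most $4$ (so for $C_4\times C_4$, $C_4\times C_2^2$, and $C_2^4$), every one of the $15$ nontrivial elements contributes $4$, giving uniformly
\[
\mu(G)=\frac{12+15\cdot 4}{16}=\frac{9}{2}\notin\Z.
\]
This settles the three elementary-abelian-like cases simultaneously.

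The remaining case is $G=C_8\times C_2$. Here I will simply count: the index-$2$ subgroup of elements of order at most $4$ contains the identity, $3$ elements of order $2$, and $4$ of order $4$, while its complement consists of the $8$ elements of order $8$. The character sum is therefore $12+3\cdot 4+4\cdot 4+8\cdot 2=56$, hence $\mu(G)=56/16=7/2\notin\Z$, again contradicting \eqref{dimension}. The only conceptual ingredient is the integrality of $\dim V^G$; the rest is a short bookkeeping, so I do not expect any real obstacle.
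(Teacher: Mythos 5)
Your proof is correct and follows exactly the route of the paper: the paper also enumerates the five abelian groups of order $2^4$, excludes $C_{16}$ by definition, and disposes of the rest by checking that $\mu(G)\notin\Z$; you have merely written out the element counts ($\mu(G)=9/2$ for the exponent-$4$ groups and $7/2$ for $C_8\times C_2$) that the paper leaves as "easily computed."
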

\begin{proof}
An abelian group of order $2^4$ is isomorphic to either 
$C_{16}$, $C_{8}\times C_2$, $C_4\times C_4$, $C_4\times C_2^2$ or $C_2^4$.
By definition $C_{16}$ has no small Mathieu representation. For other groups we can easily compute $\mu (G)$ 
to see that they don't satisfy the condition \eqref{dimension}. Hence we get the lemma.
\end{proof} 
Let $A$ be a maximal normal abelian subgroup of $G=G_2$.
We have $|A|\leq 2^3$ by Lemma \ref{ab2^4}.
Since $G$ is a $2$-group, the centralizer $C_G(A)$ coincides with $A$ and 
the natural homomorphism $\varphi\colon G/A \rightarrow \Aut (A)$ is injective. 
Thus for $A\simeq C_2, C_4, C_2^2$, we have
$\Aut (A)\simeq \{1\}, C_2, \mathfrak{S}_3$ and we see that $|G|\leq 2^3$.

It remains to consider the case $|A|=2^3$. There are three abelian groups $A\simeq C_8$, $C_4\times C_2$ and $C_2^3$. We treat them separately.

\noindent {\bf{Case $A\simeq C_8$ :}}
Here we have $\Aut (A)\simeq C_2^2$. To show $|G|\leq 2^4$, it suffices to show that $\varphi$ is not 
surjective. Assume the contrary. 
Then there exists $x\in G$ which acts on the generator $g$ of $A$ by $xgx^{-1}=g^5$. We set 
$H=\langle A, x \rangle$. Since $x^2$ commutes with $A$, we get $x^2\in A$ and $|H|=2^4$.  
The equality $(g^i x)^4=g^{4i}x^4$ shows that $(g^i x)^4=1$ if ($x^4=1$ and $i$ is even) or ($x^4=g^4$ 
and $i$ is odd), and $\ord (g^i x)=8$ otherwise.
This enables us to compute
\[\mu (H) = \frac{1}{2} (\mu (A) + \mu (Ax)) = \frac{1}{2} (4+3) \not\in \mathbb{Z}.\]
Therefore $\varphi$ is not surjective.

\noindent {\bf{Case $A\simeq C_4\times C_2$ :}}
Let $g,h$ be generators of $A$ with
$\ord (g)=4$ and $\ord (h)=2$. Then $\Aut (A)$ is generated by 
\begin{equation*}
\begin{split}
\alpha & \colon \begin{pmatrix} g & h \end{pmatrix} \mapsto 
\begin{pmatrix} g+h & 2g+h \end{pmatrix},\\
\beta & \colon \begin{pmatrix} g & h \end{pmatrix} \mapsto 
\begin{pmatrix} g & 2g+h \end{pmatrix},
\end{split}
\end{equation*}
and is isomorphic to $D_8$. 
Assume that $|\mathrm{Im}\,(\varphi)|\geq 4$. Then $G$ has an element $x$ such that $\varphi (xA)=
\alpha^2$ and $x^2\in A$. By $(ax)^2=x^2 (a\in A)$, it follows that 
all elements in the coset $Ax$ have the same order $\ord (x)\in \{2,4,8\}$. 
Hence $\mu (Ax)$ is even, while $\mu (A)=5$. Thus the group $H=\langle A,x \rangle$ 
cannot have a small Mathieu representation by \eqref{dimension}, a contradiction.  
Hence $|\mathrm{Im}\,(\varphi)|\leq 2$.

\noindent {\bf{Case $A\simeq C_2^3$ :}}
We have $\Aut (A) =\GL (3, \mathbb{F}_2)$ which is the simple group of order $168$. 
(One of) its $2$-Sylow subgroups 
consist of elements 
\[\begin{pmatrix} 1 & \alpha & \beta \\
0 & 1 & \gamma \\ 0 & 0 & 1 \end{pmatrix},\quad  (\alpha, \beta, \gamma \in \mathbb{F}_2)\] 
and is isomorphic to $D_8$. 
Let us assume that the subgroup $\mathrm{Im}(\varphi)$ has at least four elements.  
Then $G$ contains an element $x$ whose image by $\varphi$ is conjugate to 
\[B=
\begin{pmatrix} 1 & 0 & 1 \\ 0 & 1 & 0 \\ 0 & 0 & 1 \end{pmatrix}=
\begin{pmatrix} 1 & 1 & 0 \\ 0 & 1 & 1 \\ 0 & 0 & 1 \end{pmatrix}^2 \]
and $x^2\in A$.
In suitable coordinates of $A$, it is easy to see that every element in the 
coset $Ax$ has order at most $4$. Hence we get the contradiction 
\[\mu (\langle A,x \rangle ) = \frac{1}{2} (\mu (A) + \mu (Ax)) = \frac{1}{2} (5+4) \not\in \mathbb{Z}.\]

Thus in all cases we get $|G|\leq 2^4$ and we obtain Proposition \ref{orders}.
\end{proof}

\section{Proofs of the Main Theorem}\label{equivalence}

We give the proofs to the main results, Theorems \ref{main} and \ref{main2}.
\begin{thm}\label{maintheorem}
The following conditions are equivalent to each other for a finite group $G$.
\begin{(enumerate)}
\item $G$ has a Mathieu action on some Enriques surface.
\item $G$ can be embedded into the symmetric group $\mathfrak S_6$ and the order  $|G|$  is not divisible by $2^4$.
\item $G$ is a subgroup of one of the following five {\em{maximal groups }}: 
\[ \mathfrak{A}_6, \mathfrak{S}_5, N_{72}=C_3^2\splitext D_8,C_2\times \mathfrak{A}_4, C_2\times C_4.\] 
\item $G$ is a group with a small Mathieu representation $V$ with $\dim V^G\geq 3$, 
whose $2$-Sylow subgroup is embeddable into $\mathfrak{S}_6$ and $G\not\simeq Q_{12}$.
\end{(enumerate)}
\end{thm}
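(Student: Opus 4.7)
The plan is to close the cycle of implications $(3) \Rightarrow (1) \Rightarrow (4) \Rightarrow (2) \Rightarrow (3)$. The first implication is already in hand: Theorem \ref{veryMathieu} constructs Mathieu actions for $\mathfrak{S}_5$, $N_{72}$ and $\mathfrak{A}_6$ on explicit Enriques surfaces, while Section \ref{3->1} treats $C_2 \times \mathfrak{A}_4$ and $C_2 \times C_4$ on the surface of Example \ref{H192}; every subgroup of one of the five maximal groups inherits a Mathieu action by restriction, since Mathieu-ness of a semi-symplectic action depends only on the Lefschetz numbers of individual elements.

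For $(1) \Rightarrow (4)$, if $G$ acts Mathieu on an Enriques surface $S$, then $V := H^{\ast}(S, \mathbb{Q})$ is by construction a $12$-dimensional small Mathieu representation of $G$. The invariant subspace $V^G$ contains $H^0(S,\mathbb{Q})$, $H^4(S,\mathbb{Q})$ and the class of any $G$-invariant ample divisor (obtained by averaging), so $\dim V^G \geq 3$. To control the $2$-Sylow subgroup $G_2$, I would combine Corollary \ref{atmost6} (no element of order $8$) and Proposition \ref{local} with the bound $|G_2| \leq 2^4$ from Proposition \ref{orders} to enumerate the handful of candidate $2$-groups, checking directly in each case that $G_2$ embeds into $\mathfrak{S}_6$. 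The exclusion $G \not\simeq Q_{12}$ requires a separate geometric argument: the unique central involution $z$ of $Q_{12}$ would need Lefschetz number $4$ and the three cyclic subgroups of order $4$ containing $z$ would impose mutually incompatible local linearisations at the symplectic fixed points dictated by Proposition \ref{local}.

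For $(4) \Rightarrow (2)$, the starting point is Proposition \ref{orders}, which yields $|G| \mid 2^4 \cdot 3^2 \cdot 5 \cdot 11$. The hypothesis $\dim V^G = \mu(G) \geq 3$ then serves as a strong character-theoretic filter on the candidate list: direct computation of $\mu(G)$ excludes groups with a small trivial component, such as $M_{11}$ itself and most of its transitive subgroups. The $2$-Sylow embeddability hypothesis further eliminates $2$-groups like $Q_{16}$, $SD_{16}$, and combined with the $Q_{12}$ exception, a case-by-case enumeration organized by $|G|$ and by Sylow structure shows that the surviving groups are precisely subgroups of $\mathfrak{S}_6$ whose order is not divisible by $16$. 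This enumeration is the main obstacle of the whole theorem, and is the reason the paper devotes a dedicated subsection to $(4) \Rightarrow (2)$.

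For $(2) \Rightarrow (3)$, I would enumerate subgroups of $\mathfrak{S}_6$ with order not divisible by $2^4$. The maximal subgroups of $\mathfrak{S}_6$ are $\mathfrak{A}_6$, two conjugacy classes of $\mathfrak{S}_5$, $N_{72} \simeq \mathfrak{S}_3 \wr \mathfrak{S}_2$, and $\mathfrak{S}_4 \times \mathfrak{S}_2$. Any $G \subseteq \mathfrak{S}_6$ of order prime to $16$ is either already inside $\mathfrak{A}_6$, $\mathfrak{S}_5$ or $N_{72}$, or else lies in $\mathfrak{S}_4 \times \mathfrak{S}_2$ without exhausting its $2$-Sylow; a short inspection of the index-$2$ and index-$3$ subgroups of $\mathfrak{S}_4 \times \mathfrak{S}_2$ forces such a $G$ into $C_2 \times \mathfrak{A}_4$ or $C_2 \times C_4$, completing the cycle.
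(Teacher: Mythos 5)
Your overall architecture is exactly the paper's: the cycle $(3)\Rightarrow(1)\Rightarrow(4)\Rightarrow(2)\Rightarrow(3)$, with $(3)\Rightarrow(1)$ quoted from Theorem~\ref{veryMathieu} and Section~\ref{3->1}, $(4)\Rightarrow(2)$ by a case-by-case classification (the paper organizes it by non-solvable groups, nilpotent groups, and the Fitting subgroup of non-nilpotent solvable groups), and $(2)\Rightarrow(3)$ by descending through the maximal subgroups of $\mathfrak S_6$ and then of $C_2\times\mathfrak S_4$. Those parts are fine as sketches.

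The genuine gap is in $(1)\Rightarrow(4)$, at the step where you must show the $2$-Sylow subgroup $G_2$ embeds into $\mathfrak S_6$. Your plan is to ``enumerate the handful of candidate $2$-groups, checking directly in each case that $G_2$ embeds into $\mathfrak S_6$,'' but this presupposes the conclusion: the quaternion group $Q_8$ is \emph{not} a subgroup of $\mathfrak S_6$ (its $2$-Sylow $C_2\times D_8$ has only four elements of order $4$), yet none of the tools you invoke rules it out. $Q_8$ passes every numerical test: all its elements have order $\le 4$, $\mu(Q_8)=(12+4\cdot 7)/8=5\in\Z$, and it genuinely possesses a small Mathieu representation (e.g.\ $5\cdot\mathbf 1$ plus the three nontrivial linear characters plus the $4$-dimensional rational irreducible). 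So the exclusion of $Q_8$ as a $2$-Sylow of a group with a Mathieu action requires a geometric argument, which is exactly the paper's Lemma~\ref{Q8}: an order-$4$ element has an anti-symplectic fixed point $P$ by Proposition~\ref{local}, the differential gives a faithful representation $Q_8\hookrightarrow\GL(T_PS)$, and every embedding of $Q_8$ into $\GL(2,\C)$ lands in $\mathrm{SL}(2,\C)$, contradicting anti-symplecticity. Without this (or an equivalent) step your implication $(1)\Rightarrow(4)$ does not close. Two smaller remarks: the paper gets the sharper bound $|G_2|\le 2^3$ directly from $\mu(g)=4$ for $g\ne 1$ in $G_2$ (since $\ord(g)\le 4$ by Corollary~\ref{atmost6}), which is cleaner than starting from the $2^4$ bound of Proposition~\ref{orders} and then having to dispose of order-$16$ groups; and your proposed $Q_{12}$ exclusion via ``incompatible local linearisations'' is vaguer than the paper's Lemma~\ref{12}, which simply observes that $\Fix(h)$ and $\Fix(gh)$ both equal the four isolated fixed points of the central involution, forcing the order-$6$ element $g$ to fix all four and contradicting Proposition~\ref{local}.
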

\begin{rmk}\label{Q12}
(1) There are $25$ isomorphism classes of $G\neq \{ 1\}$ which satisfy the conditions (1)-(4) of the theorem. They are the groups exhibited in Propositions 
\ref{nonsolvable}, \ref{nilpotent} and \ref{solvable}.\\
(2) The group $Q_{12}$ has a
subtle behavior in the condition (4) of the theorem. Its $2$-Sylow subgroup is obviously embedded into 
$\mathfrak{S}_6$. Moreover, using the notation in \cite[Table 2]{nakamura}, the character
$4\chi_0+\chi_2+2\chi_3+\chi_4+\chi_5$ is a small 
Mathieu representation with $\dim V^{Q_{12}}=4$. 
But it has no Mathieu actions by Lemma \ref{12}.
It is thus necessary to put the extra condition on this group in the condition (4).
\end{rmk}
We have already shown (3)$\Rightarrow$(1) in Section \ref{3->1}. 
In the following three subsections we prove the rest in the order (1)$\Rightarrow$(4) $\Rightarrow$(2) $\Rightarrow$(3).

\subsection{Proof of (1)$\Rightarrow$(4)}\label{1to4}
By definition, $H^*(S,\Q)$ is a small Mathieu representation.
Obviously $H^i(S,\mathbb{Q})\,(i=0,4)$ are invariant subspaces and for 
any ample divisor $H$ on $S$ the sum $H'=\sum_{g\in G} gH$ is a $G$-invariant ample divisor, 
showing $H^2(S,\mathbb{Q})^G\neq 0$. Putting together, 
we find $\dim H^*(S,\mathbb{Q})^G\geq 3$.

Next we show that the 2-Sylow subgroup $G_2$
is embeddable in $\mathfrak{S}_6$. By Corollary \ref{atmost6} every element $g\in G_2$ has 
$\ord (g)\leq 4$. By the character table ($\ast\ast$), we see that $\mu (g)=4$ unless $g=1$. 
Thus the condition \eqref{dimension},
\[\mu (G_2)=\frac{1}{|G_2|}(12+4(|G_2|-1))\in \mathbb{Z}\]
gives $|G_2|\leq 2^3$.
It is easy to check that all $2$-groups of order at most $8$,
except for $G_2=C_8$ and $Q_8$, can be embedded in the 
group $C_2\times D_8$, the $2$-Sylow subgroup of $\mathfrak{S}_6$.
The cyclic group $C_8$ is impossible by Corollary \ref{atmost6}.
The group $Q_8$ is also impossible by the following lemma, which concludes 
$G_2\subset \mathfrak{S}_6$.
\begin{lem}\label{Q8}
No Mathieu actions on Enriques surfaces by the quarternion group, 
$Q_8= \bracket{g,h\mid g^4=1, hgh^{-1}=g^{-1}, g^2=h^2}$, 
exist.
\end{lem}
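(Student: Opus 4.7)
The plan is to assume $Q_8$ has a Mathieu action on some Enriques surface $S$ and derive a contradiction by analysing the tangent representation at the four isolated symplectic fixed points of the unique involution $\iota = g^2$. By Proposition \ref{local}, $\Fix^+(\iota) = \{P_1,\dots,P_4\}$ consists of four points on which $(d\iota)_{P_i} = -\id_{T_{P_i}S}$, and the remainder of $\Fix(\iota)$ is a (possibly empty) disjoint union of smooth curves.

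The first step is to show that for any order-$4$ element $\sigma \in Q_8$, every fixed point of $\sigma$ lies in $\Fix^+(\iota)$. If instead a fixed point $P$ of $\sigma$ sat on a smooth curve $C\subset\Fix(\iota)$, then $\sigma$ would preserve $C$ (components of $\Fix(\iota)$ are disjoint and $\sigma$ normalises $\iota$), and $\sigma|_C$ would be an involution with isolated fixed point at $P$, giving tangential eigenvalue $-1$ of $(d\sigma)_P$ along $T_P C$. The normal eigenvalue $\beta$ would satisfy $\beta^2=-1$ since the normal eigenvalue of $(d\iota)_P$ is $-1$, so $\beta=\pm i$ and $\det (d\sigma)_P = \mp i$, violating semi-symplecticity. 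Hence $\Fix(\sigma)\subseteq\Fix^+(\iota)$, which combined with $|\Fix(\sigma)|=4$ from the Mathieu condition forces $\Fix(\sigma)=\{P_1,\dots,P_4\}$. Running the same argument for each of the three order-$4$ conjugacy classes of $Q_8$ shows that every element of $Q_8$ fixes each $P_i$ pointwise.

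Next, at each $P_i$ one obtains a two-dimensional representation $\rho_i\colon Q_8\to \mathrm{GL}(T_{P_i}S)$ with $\rho_i(\iota)=-\id$. Since $\iota = g^2 = [h,g]$ is a commutator of $Q_8$, every one-dimensional character of $Q_8$ kills $\iota$; thus $\rho_i$ cannot contain a one-dimensional subrepresentation and is therefore the unique faithful two-dimensional irreducible representation of $Q_8$. But this is the standard embedding $Q_8\hookrightarrow \mathrm{SU}(2)\subset \mathrm{SL}(2,\C)$, so $\det\rho_i(\tau)=1$ for every $\tau\in Q_8$. In particular every order-$4$ element $\sigma$ is symplectic at every $P_i$, contradicting $|\Fix^-(\sigma)|=2$ from the Mathieu condition.

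The crux is the local tangent calculation at fixed points of $\sigma$ lying on $\iota$-fixed curves: once this rules out such fixed points and localises $\Fix(\sigma)$ onto the four symplectic $\iota$-fixed points, the rest follows cleanly from the fact that the unique faithful two-dimensional representation of $Q_8$ lands in $\mathrm{SL}(2,\C)$.
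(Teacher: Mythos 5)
Your proof is correct and follows essentially the same route as the paper: first localise the fixed points of the order-$4$ elements onto the four symplectic fixed points of $g^2$, then observe that the resulting two-dimensional representation of $Q_8$ on each tangent space is forced into $\mathrm{SL}(2,\C)$, contradicting the existence of anti-symplectic fixed points required by Proposition \ref{local}. The only cosmetic difference is that you derive the $\mathrm{SL}_2$ fact from the uniqueness of the faithful irreducible representation, whereas the paper does a short direct matrix computation; the content is identical.
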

\begin{proof} 
By means of contradiction, suppose that we had one. We denote by $i=g^2$ the unique and central involution in $Q_8$. 
By Proposition \ref{local}, we see that the fixed point sets of
$g$ and $h$
both coincide with the set of four isolated fixed points of $i$. In particular, 
these four points are fixed by the whole group.

Let $P$ be one of anti-symplectic fixed points of $g$, which exists by Proposition \ref{local}.
By looking at differentials, we obtain 
a map $d_P \colon Q_8\rightarrow \GL (T_P S)$, which is injective by 
the complete reducibility for finite groups.
But since any embedding of $Q_8$ into $\GL (2,\mathbb{C})$ 
factors through $\mathrm{SL} (2,\mathbb{C})$, this contradicts to that $P$ was an anti-symplectic fixed point. 
This proves the lemma.
(Proof of the latter fact: Note that the diagonal form of the involution $d_P (i)$ is either $\diag (1,-1)$ or 
$\diag (-1,-1)$. In the former case, its centralizer in $\GL (2,\mathbb{C})$ is the 
commutative group of diagonal matrices, 
hence we get a contradiction. 
In the latter case, from $g^2=h^2=i$ and $gh\neq hg$, we must have that both $d_P(g)$ and 
$d_P(h)$ have $\tr=0$ and $\det =1$.) 
\end{proof}
Finally we show $G\not\simeq Q_{12}$.
\begin{lem}\label{12}
No Mathieu actions on Enriques surfaces by the group $Q_{12}=\langle g,h\mid g^6=1, h^2=g^3, hgh^{-1}=g^{-1}\rangle $ exist.
\end{lem}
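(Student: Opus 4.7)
The plan is to derive a contradiction by exploiting the fact that $Q_{12}$ contains two distinct order-$4$ elements $h$ and $gh$ whose common square is the central involution $g^3$, and that the forced permutation action of $\langle g\rangle$ on a certain fixed-point set is incompatible with both $h$ and $gh$ being Mathieu.

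First I would pin down the unique fixed point of $g$. Since $g$ has order $6$, Proposition \ref{local} gives $\Fix(g)=\{P_0\}$, a single symplectic fixed point. Because $h$ normalises $\langle g\rangle$ (the relation $hgh^{-1}=g^{-1}$), $h$ permutes $\Fix(g)$ and therefore fixes $P_0$; since $g$ and $h$ generate $Q_{12}$, the point $P_0$ is fixed by the whole group.

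The heart of the argument is to identify $\Fix(h)$ with $\Fix^+(g^3)$, the set of four isolated symplectic fixed points of $g^3$. Since $h^2=g^3$ we have $\Fix(h)\subset \Fix(g^3)$, and by Proposition \ref{local} the latter consists of these four points together with a disjoint union of smooth curves. A Mathieu element of order $4$ has exactly four isolated fixed points. I would rule out any of them lying on the fixed curves as follows: at such a hypothetical point $P\in \Fix^-(g^3)$, $(dg^3)_P=\diag(1,-1)$, hence the eigenvalues of $(dh)_P$ square to $(1,-1)$. The eigenvalue $1$ is excluded because, after linearising $h$ at $P$, a tangent fixed direction would force $P$ to lie on a fixed curve of $h$. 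This leaves eigenvalues $(-1,\pm\sqrt{-1})$, whose determinant $\mp\sqrt{-1}$ contradicts $\det(dh)_P=\pm 1$, a constraint which holds for any semi-symplectic automorphism since the local action on the square of the holomorphic $2$-form is by $(\det(dh)_P)^2$ and must be trivial. Hence $\Fix(h)=\Fix^+(g^3)$, and $h$ fixes each of the four points.

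Exactly the same analysis applied to $gh$, which also has order $4$ because $(gh)^2=g\cdot g^{-1}h\cdot h=h^2=g^3$, shows $\Fix(gh)=\Fix^+(g^3)$. Writing $\Fix^+(g^3)=\{P_0,P_1,P_2,P_3\}$, the element $g$ commutes with $g^3$ and therefore permutes this set; since $\Fix(g)=\{P_0\}$, $g$ must act as a $3$-cycle on $\{P_1,P_2,P_3\}$. But if both $h$ and $gh$ fix every $P_i$, then so does $g=(gh)h^{-1}$, contradicting the $3$-cycle action and completing the proof. The main obstacle is the local eigenvalue bookkeeping that confines $\Fix(h)$ to the isolated part of $\Fix(g^3)$; once this is settled, the global combinatorial clash between the two order-$4$ elements sharing the square $g^3$ is immediate.
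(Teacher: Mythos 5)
Your proof is correct and follows essentially the same route as the paper: both identify $\Fix(h)$ and $\Fix(gh)$ with the four isolated fixed points of the central involution $g^3$ and then contradict the fact that $g$, of order $6$, fixes only one point. The only difference is that your local eigenvalue bookkeeping at a hypothetical fixed point on $\Fix^-(g^3)$ is a detour the paper avoids, since the containment $\Fix(\sigma)\subset\Fix^+(\sigma^2)$ for an order-$4$ element is already built into the proof of Proposition \ref{chara} via the $K3$-cover (both $\Fix(\varphi)$ and $\Fix(\varphi\varepsilon)$ lie in $\Fix(\varphi^2)$, whose image in $S$ is exactly $\Fix^+(\sigma^2)$).
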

\begin{proof}
Assume we had one.
Since $g^3$ is the unique involution in $Q_{12}$, by Proposition \ref{local}, we see that the fixed point 
sets of $h$ and $gh$ both coincide with the set of four isolated fixed points of $g^3$. 
We denote them by $P_i\ (i=1,\dots, 4)$. However from the equations $h(P_i)=P_i, gh(P_i)=P_i$ we get 
$g(P_i)=P_i$, contradicting to that $g$ of order $6$ has a unique isolated fixed point by 
Proposition \ref{local}.
\end{proof}

\subsection{Proof of (4)$\Rightarrow$(2)}\label{4to2}

\begin{lem}
Let $G$ have a small Mathieu representation and assume that $\mu (G)\geq 3$.
Then $11$ does not divide $|G|$.  
\end{lem}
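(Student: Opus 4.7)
The plan is very short because the statement follows immediately from monotonicity of the invariant dimension under subgroup inclusion, applied to a cyclic subgroup of order $11$.

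First, I would invoke Cauchy's theorem: if $11$ divides $|G|$, then $G$ contains an element $g$ of order $11$, so $H=\langle g\rangle\simeq C_{11}$ is a subgroup. Next, using the character values from Table $(\ast\ast)$, every non-identity element of $H$ has order $11$ and thus character value $\mu=1$, while the identity contributes $12$. So
\[
\mu(H)=\frac{1}{|H|}\sum_{h\in H}\mu(h)=\frac{12+10\cdot 1}{11}=2.
\]
Since $\mu(K)=\dim V^K$ for any subgroup $K\le G$ acting on $V$, and since the inclusion $H\hookrightarrow G$ gives $V^G\subseteq V^H$, we obtain
\[
\mu(G)=\dim V^G\le \dim V^H=\mu(H)=2,
\]
contradicting the hypothesis $\mu(G)\ge 3$. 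Hence $11\nmid|G|$.

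There is really no obstacle here: the only thing to check is the evaluation of $\mu(C_{11})$, which is immediate from $(\ast\ast)$, and the subgroup monotonicity $\mu(G)\le\mu(H)$, which is a standard property of characters over a field of characteristic zero. The same one-line argument also rules out several other cyclic subgroups whose $\mu$-value is below $3$ (for instance $C_8$ gives $\mu=3$, exactly at the threshold, and $C_5$ gives $\mu=4$), so the $11$-case is the extremal one; this is why the lemma singles out the prime $11$.
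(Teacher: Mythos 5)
Your proof is correct and is essentially the paper's own argument: the paper passes to an $11$-Sylow subgroup (necessarily $C_{11}$ by Proposition \ref{orders}) and uses $2=\mu(C_{11})\geq\mu(G)$, which is the same computation and the same monotonicity of $\mu$ under subgroups. One small slip in your closing aside: $\mu(C_8)=(12+4+2\cdot 4+4\cdot 2)/8=4$, not $3$, though this does not affect the lemma.
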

\begin{proof}
If $G$ has a nontrivial $11$-Sylow subgroup $G_{11}$, the dimensions of invariant subspaces satisfy 
$2=\mu (G_{11})\geq \mu (G)$,
a contradiction.
\end{proof}
\begin{lem}\label{2gp}
Let $G$ have a small Mathieu representation and assume that $G_2$ is embeddable into $\mathfrak{S}_6$.
Then $G$ has no elements of order $8$ and $|G_2|\leq 2^3$. 
\end{lem}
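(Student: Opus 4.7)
The plan is to leverage two facts: the combinatorial structure of $\mathfrak S_6$ (in particular, no element of order $8$) and the integrality constraint \eqref{dimension} applied to a $2$-Sylow subgroup all of whose nontrivial elements have $\mu$-value equal to $4$.

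First, I would dispose of order-$8$ elements. Any such element $g \in G$ would generate a cyclic subgroup of order $8$ contained in some $2$-Sylow subgroup $G_2$; by hypothesis $G_2 \hookrightarrow \mathfrak S_6$. But in $\mathfrak S_6$ the cycle lengths of a permutation are at most $6$ and their sum is at most $6$, so the order of any element (the lcm of its cycle lengths) cannot equal $8$. This contradiction shows $G$ has no elements of order $8$, and in particular every non-identity element of $G_2$ has order $2$ or $4$.

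For the bound $|G_2| \le 2^3$, I would now invoke the character values in Table $(\ast\ast)$: both for order $2$ and order $4$, the character $\mu$ of a small Mathieu representation takes the value $4$. Therefore every non-identity element $g \in G_2$ satisfies $\mu(g)=4$, and averaging gives
\[
\mu(G_2) \;=\; \frac{1}{|G_2|}\left(12 + 4\bigl(|G_2|-1\bigr)\right) \;=\; 4 + \frac{8}{|G_2|}.
\]
Condition \eqref{dimension} says that $\mu(G_2) = \dim V^{G_2}$ must be a non-negative integer, and this forces $|G_2|$ to divide $8$. Hence $|G_2| \le 2^3$, completing the proof.

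There is no serious obstacle here; the only thing to watch is that the hypothesis $G_2 \hookrightarrow \mathfrak S_6$ must genuinely be used (it is what rules out order-$8$ elements and thereby validates the uniform value $\mu(g) = 4$ on $G_2 \setminus \{1\}$). The integrality trick is exactly the same mechanism used in the proof of Proposition \ref{orders}; one simply refines the bound from $|G_2| \le 2^4$ there to $|G_2| \le 2^3$ by excluding the one remaining order.
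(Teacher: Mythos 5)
Your proof is correct and follows essentially the same route as the paper: rule out order-$8$ elements via the embedding $G_2\hookrightarrow\mathfrak S_6$ (the paper notes $(\mathfrak S_6)_2\simeq C_2\times D_8$, you use cycle types — same fact), then apply the integrality of $\mu(G_2)=4+8/|G_2|$ using $\mu(g)=4$ for all $g\neq 1$ in $G_2$. No gaps.
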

\begin{proof}
Since the $2$-Sylow subgroup $(\mathfrak{S}_6)_2$ is isomorphic to $C_2\times D_8$, 
$G_2$ has no elements of order $8$. 
By the definition of small Mathieu character $\mu$, 
every non-identity element $g\in G_2$ has character $\mu (g)=4$. 
Thus the condition 
\[\mu (G_2) = \frac{1}{|G_2|} (12+4(|G_2|-1))\in \mathbb{Z}\]
gives $|G_2|\leq 8$. 
\end{proof}
\begin{cor}\label{order}
Let $G$ be a finite group that satisfies the condition (4) of Theorem \ref{maintheorem}. 
Then for all $g\in G$ we have $\ord(g)\leq 6$. Moreover we have 
\[|G|= 2^{a_2} 3^{a_3} 5^{a_5}\] 
for non-negative integers $a_2\leq 3, a_3\leq 2, a_5\leq 1$. 
\end{cor}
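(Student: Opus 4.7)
The plan is to derive Corollary \ref{order} by assembling the three immediately preceding results (Proposition \ref{orders}, the lemma that $11\nmid |G|$ when $\mu(G)\geq 3$, and Lemma \ref{2gp}) together with the list of allowed element orders from Table~($\ast\ast$). Since condition (4) of Theorem \ref{maintheorem} packages precisely the hypotheses of these results, the corollary should follow by direct combination.

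First I would record the constraint on individual element orders. Because $V$ is a small Mathieu representation, the function $\mu$ is only defined (via Table~($\ast\ast$)) on the set of orders $\{1,2,3,4,5,6,8,11\}$, so every $g\in G$ has $\ord(g)$ in this set. Next I would eliminate order $8$: Lemma \ref{2gp} applies because the $2$-Sylow subgroup $G_2$ embeds into $\mathfrak{S}_6$ by hypothesis, giving both the non-existence of order-$8$ elements and the bound $|G_2|\leq 2^3$, hence $a_2\leq 3$. Then I would eliminate order $11$: the hypothesis $\dim V^G\geq 3$ is exactly $\mu(G)\geq 3$, so the lemma preceding Lemma \ref{2gp} implies $11\nmid |G|$, in particular no element has order $11$. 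Combining the two exclusions with the initial list shows $\ord(g)\leq 6$ for all $g\in G$.

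For the size formula, Proposition \ref{orders} already gives $|G|=2^{a_2}3^{a_3}5^{a_5}11^{a_{11}}$ with $a_3\leq 2$ and $a_5\leq 1$. The step above forces $a_{11}=0$, and Lemma \ref{2gp} upgrades the bound on $a_2$ from $\leq 4$ to $\leq 3$. This yields exactly the stated decomposition $|G|=2^{a_2}3^{a_3}5^{a_5}$ with the claimed inequalities. There is essentially no obstacle here: the corollary is a synthesis step, and the only mild care needed is to verify that the hypothesis ``$\mu(G)\geq 3$'' used to rule out the prime $11$ coincides with the condition $\dim V^G\geq 3$ built into (4), which is immediate from $\dim V^G=\mu(G)$ noted right after Definition \ref{Maction}.
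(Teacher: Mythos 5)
Your proof is correct and follows the paper's own route exactly: the paper likewise proves this corollary by combining Proposition \ref{orders} with the two preceding lemmas (the $\mu(G)\geq 3 \Rightarrow 11\nmid|G|$ lemma and Lemma \ref{2gp}), and your elaboration of which hypothesis in condition (4) feeds into which lemma is the intended reading.
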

\begin{proof} This follows immediately by combining Proposition \ref{orders} and lemmas above.
\end{proof}
In particular, we have proved the latter part of (2) of Theorem \ref{maintheorem}.

In the following, we classify all groups that satisfy the condition (4). 
First we consider non-solvable groups.
Recall that $G$ is non-solvable if and only if at least one of its composition factors is 
a non-abelian finite simple group.
\begin{prop}\label{nonsolvable} 
Let $G$ be a finite group that satisfies the condition (4) of Theorem \ref{maintheorem}.
Assume that $G$ is non-solvable. Then $G$ is isomorphic either to $\mathfrak{A}_5, \mathfrak{S}_5$ or 
$\mathfrak{A}_6$.
\end{prop}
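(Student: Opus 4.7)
The approach is to combine the strong arithmetic restriction of Corollary~\ref{order} with the classification of small non-abelian simple groups. By Corollary~\ref{order}, $|G|$ divides $2^{3}\cdot 3^{2}\cdot 5 = 360$ and every $g\in G$ satisfies $\ord(g)\leq 6$. The non-abelian simple groups of order dividing $360$ are exactly $\mathfrak{A}_{5}$ and $\mathfrak{A}_{6}$; all other small candidates (e.g.\ $\mathrm{PSL}(2,7)$ of order $168$) are eliminated by the divisibility $|G|\mid 360$.

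Next I would invoke the structure of the solvable radical. Let $R\lhd G$ be the solvable radical and set $\bar{G}=G/R$. Then $\bar{G}$ is non-solvable with trivial solvable radical, so its socle is a direct product of non-abelian simple groups, each of order at least $60$. Since $|\bar{G}|\leq 360$, the socle is a single simple factor $S\in\{\mathfrak{A}_{5},\mathfrak{A}_{6}\}$, and the standard centralizer argument ($C_{\bar{G}}(S)\cap S=Z(S)=1$ combined with the fact that any nontrivial minimal normal subgroup of $\bar{G}$ would be non-abelian simple and hence another socle factor) gives $C_{\bar{G}}(S)=1$, hence $\bar{G}\hookrightarrow \mathrm{Aut}(S)$.

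If $S=\mathfrak{A}_{6}$, then $|\bar{G}|\geq 360 \geq |G|$, forcing $R=1$ and $G=\mathfrak{A}_{6}$. The genuine case is $S=\mathfrak{A}_{5}$, where $\bar{G}\in\{\mathfrak{A}_{5},\mathfrak{S}_{5}\}$ and $|R|\leq 6$. To show $R=1$, I would fix $g\in G$ of order $5$ (it exists by Cauchy, since $5\mid|G|$) and study its conjugation action on $R$. Since $5$ divides $|\bar{G}|$ and $a_{5}\leq 1$ in Corollary~\ref{order}, we have $\gcd(5,|R|)=1$; thus $g$ partitions $R$ into orbits of sizes $1$ or $5$. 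A nontrivial fixed point $r\in R$ would commute with $g$, giving $\ord(rg)=\mathrm{lcm}(5,\ord(r))\geq 10$, contradicting Corollary~\ref{order}. Therefore the only fixed element is the identity and $|R|\equiv 1\pmod{5}$, which combined with $|R|\in\{1,2,3,4,6\}$ leaves $|R|\in\{1,6\}$.

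The main obstacle is the final elimination of $|R|=6$. Here $R\in\{C_{6},\mathfrak{S}_{3}\}$ and $|\mathrm{Aut}(R)|\in\{2,6\}$, neither of which is divisible by $5$; hence $g$ must centralize $R$ entirely, contradicting the vanishing of $R^{\langle g\rangle}\setminus\{1\}$. This forces $R=1$ and $G\in\{\mathfrak{A}_{5},\mathfrak{S}_{5}\}$, completing the classification. The delicate part is entirely this interplay between the order bound $\ord(g)\leq 6$ and the radical bound $|R|\leq 6$ coming from $|G|\leq 360$; without the input of Corollary~\ref{order} the argument collapses.
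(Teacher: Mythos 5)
Your proof is correct, but it reaches the conclusion by a genuinely different route from the paper's. The paper takes a composition factor $N\simeq\mathfrak{A}_5$, realizes it as a section $H/T$ with $H\subseteq G$, and then uses the monotonicity of the averaged Mathieu character --- $3=\mu(\mathfrak{A}_5)=\mu(H/T)\geq\mu(H)\geq\mu(G)\geq 3$, with equality forcing $T=\{1\}$ --- to conclude that $\mathfrak{A}_5$ is an honest subgroup; the remaining extension problem is then settled through $\Aut(\mathfrak{A}_5)\simeq\mathfrak{S}_5$ and the absence of elements of order $5n$ ($n\geq 2$). You instead work top-down: you pass to $\bar G=G/R$ for the solvable radical $R$, identify the socle $S\in\{\mathfrak{A}_5,\mathfrak{A}_6\}$ via the divisibility $|G|\mid 360$, embed $\bar G$ into $\Aut(S)$ by the centralizer argument, and then kill $R$ by letting an element of order $5$ act on $R$ and exploiting the bound $\ord(g)\leq 6$ together with $|R|\leq 6$. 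The paper's route makes essential further use of the hypothesis $\dim V^G\geq 3$ at the step $T=\{1\}$ (beyond its role inside Corollary \ref{order}), whereas yours consumes the hypotheses only through Corollary \ref{order} and is purely group-theoretic from there on, at the cost of invoking the standard socle machinery for groups with trivial solvable radical. Both arguments ultimately rest on the same two facts --- $\Aut(\mathfrak{A}_5)\simeq\mathfrak{S}_5$ and the non-existence of elements of order at least $10$ --- to exclude extra factors, and your final elimination of $|R|=6$ via $5\nmid|\Aut(R)|$ is a clean touch that the paper does not need because its normal subgroup sits above, not below, the simple factor.
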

\begin{proof}
Let $N$ be a composition factor of $G$ which is a non-abelian simple group.
By Corollary \ref{order} and the table of finite simple groups (see \cite{ATLAS}), $N$ is 
either $\mathfrak{A}_5$ or $\mathfrak{A}_6$. In the latter case we see $G=N\simeq \mathfrak{A}_6$
by Corollary \ref{order}.

Let us continue the case $N\simeq \mathfrak{A}_5$. By order reason, $N$ is the only non-abelian simple factor.
We have subgroups $H\subset G$ and $T\triangleleft H$ such that $H/T\simeq \mathfrak{A}_5$. 
Since $H$ inherits the small Mathieu representation and $\mathfrak{A}_5$ is a quotient of $H$,
we have
\[3=\mu (\mathfrak{A}_5)=\mu (H/T)\geq \mu (H) \geq \mu(G)\]
by the discussion after Proposition \ref{orders}.
By the condition $\dim V^G\geq 3$, 
we have equalities. It follows that $T$ is trivial and $H\simeq \mathfrak{A}_5$. 

By Corollary \ref{order}, the index $[G:H]$ is a divisor of $6$. Hence the composition series 
of $G$ looks either (i) $H\triangleleft G$ or (ii) $H\triangleleft G'\triangleleft G$ (if it has more than
one terms). 
Let us begin with (i).
We consider the natural homomorphism $\varphi \colon G\rightarrow \Aut (H)$.
Since $G$ does not contain elements of order $5n\, (n\geq 2)$ and $H$ is center-free, 
we see that $\varphi$ is injective into $\Aut (\mathfrak{A}_5)\simeq \mathfrak{S}_5$.
Therefore $G$ is isomorphic to $\mathfrak{S}_5$. 
In the case (ii), we get $G'\simeq \mathfrak{S}_5$ by (i). We again consider the natural homomorphism
$\psi \colon G\rightarrow \Aut (G')$. By the same reasoning as before, $\psi$ is injective. 
But this is not the case since $G\supset G'\simeq \mathfrak{S}_5$ is a proper inclusion.

Thus we obtain the classification of non-solvable groups.
\end{proof}

We recall that $G$ is nilpotent if and only if $G$ is the direct product of its Sylow subgroups.
\begin{prop}\label{nilpotent}
Let $G$ be a nilpotent group that satisfies the condition (4) of Theorem \ref{maintheorem}.
Then $G\simeq C_n^a$ ($2\leq n\leq 6$ if $a=1$ and otherwise $(n,a)=(2,2),(3,2),(2,3)$), $C_2\times C_4$ or $D_8$.
\end{prop}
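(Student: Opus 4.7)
The plan is to combine the Sylow decomposition of the nilpotent group $G$ with the character-theoretic constraint that the small Mathieu class function $\mu$ must be an honest character, i.e.\ a non-negative integer combination of irreducibles, not merely a class function whose average is an integer.

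First I would write $G=G_2\times G_3\times G_5$ and bound the factors by Corollary~\ref{order}: $|G_5|\le 5$, $|G_3|\le 9$, $|G_2|\le 8$ with no element of order~$8$. Since every element of $G$ has order $\le 6$, taking $\mathrm{lcm}$'s of element orders coming from different Sylow subgroups forces: if $G_5\ne 1$ then $G_2=G_3=\{1\}$ (else one gets order $10$ or $15$), and if both $G_2,G_3$ are nontrivial then the exponent of $G_2$ divides $2$ (else an element of order $12$ appears). So $G$ is either a pure $p$-group, or $G=G_2\times G_3$ with $G_2$ elementary abelian and $G_3\in\{C_3,C_3^2\}$.

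For the pure $p$-group cases I would enumerate: (a) the $5$-group case gives only $C_5$; (b) the $3$-group case, after ruling out exponent~$9$, gives only $C_3$ or $C_3^2$; (c) the $2$-group case, using Lemma~\ref{2gp} together with the embedding $G_2\hookrightarrow(\mathfrak S_6)_2\simeq C_2\times D_8$, gives precisely $C_2, C_2^2, C_4, C_2^3, C_2\times C_4, D_8$ (the remaining groups of order~$8$ are $C_8$, ruled out by exponent, and $Q_8$, which does not embed into $\mathfrak S_6$). For each of these nine candidates one verifies existence of a small Mathieu representation by decomposing $\mu$ into irreducible characters and checking non-negative integrality of all multiplicities together with $\dim V^G\ge 3$; this is a routine finite check.

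The mixed case is the crux. For $G=G_2\times G_3$ with both factors nontrivial, I would pull back a nontrivial linear character $\chi$ of $G_3$ and test it against $\mu$. Splitting the double sum by $g_2=1$ vs.\ $g_2\ne 1$ of order~$2$, the inner sums over $G_3$ evaluate to $9$ or $3$ respectively, giving
\[ \langle\mu,\chi\rangle \;=\; 1+\frac{2}{|G_2|}\ \text{if}\ G_3=C_3, \qquad \langle\mu,\chi\rangle \;=\; \frac{|G_2|+2}{3|G_2|}\ \text{if}\ G_3=C_3^2. \]
Integrality forces $|G_2|\in\{1,2\}$ in the first case and $|G_2|=1$ in the second, so the only new isomorphism type from the mixed case is $C_6$. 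The main obstacle is that the weaker condition $\mu(G)\in\Z$ alone is not restrictive enough: e.g.\ $C_2^2\times C_3$ has $\mu(G)=3$, yet fails to admit a small Mathieu representation because the linear-character test above yields $\langle\mu,\chi\rangle=3/2$. The sharper filter provided by the individual inner products is precisely what reduces the list to the ten groups asserted.
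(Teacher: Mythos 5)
Your argument is correct and follows essentially the same route as the paper: nilpotency reduces everything to the Sylow factors, Corollary \ref{order} and Lemma \ref{2gp} bound those, and the mixed $\{2,3\}$-groups are eliminated by the integrality of inner products of $\mu$ with linear characters (the finer test needed because $\mu(C_2^2\times C_3)=3$ is already an integer). The only cosmetic difference is that the paper reduces to the two minimal forbidden subgroups $C_2\times C_3^2$ (killed by $\mu=8/3\notin\Z$) and $C_2^2\times C_3$ (killed by pairing $\mu$ with a faithful linear character of a $C_6$ subgroup, giving $1/2$), whereas you test all mixed groups uniformly against a character pulled back from $G_3$, obtaining $3/2$ in that case; both computations are valid and yield the same list of ten groups.
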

\begin{proof}
Corollary \ref{order} and the given condition on $2$-Sylow subgroups classify
the Sylow subgroups of $G$ as follows: 
$G_2$ is isomorphic to $C_2^3, C_2\times C_4, D_8$ or has order at most $4$, 
$G_3$ is isomorphic to $C_3^2$ or $ C_3$ and $G_5$ is isomorphic to $C_5$ (if not trivial). 

We claim that neither groups $H_1=C_2\times C_3^2$ nor $H_2=C_2^2\times C_3$ have small Mathieu representations.
In fact, the former has $\mu (H_1)=8/3\not\in \mathbb{Z}$. For the latter, 
let us choose generators $g,h\in H_2$ with $g^6=h^2=ghg^{-1}h^{-1}=1$. Let $\psi$ be the character of $H_2$
assigning $h\mapsto 1$ and $g\mapsto \zeta_6$, where $\zeta_6$ is the primitive $6$-th root of unity. 
Then the inner product of characters $(\psi, \mu)$ is $1/2\not\in \mathbb{Z}$. Thus $H_2$ 
does not have small Mathieu representations.

Recall that all elements $g\in G$ have $\ord (g)\leq 6$ by Corollary \ref{order}.
This fact with the non-existence of subgroups $H_i$ above leads us to the list. 
\end{proof}

Finally we treat the case $G$ is non-nilpotent and solvable.
Recall that any finite group has the maximal normal nilpotent subgroup $F$, called the 
{\em{Fitting subgroup}}. When $G$ is non-nilpotent and solvable, $F$ is a proper subgroup and 
it is known that the centralizer $C_G (F)$ coincides with the center $Z(F)$ of $F$. 
In particular the natural homomorphism $\varphi \colon G/F \rightarrow \Out (F)$ is injective,
where $\Out (F)=\Aut (F)/\mathrm{Inn} (F)$ is the group of outer automorphism classes. 
Moreover, by the extended Sylow's theorem for solvable groups, the exact sequence
\begin{equation}\label{F}
\begin{CD}
 1 @>>> F @>>> G @>>> G/F @>>> 1
\end{CD}
\end{equation}
splits if $|F|$ and $|G/F|$ are coprime. 
\begin{prop}\label{solvable}
Let $G$ be a non-nilpotent and solvable group that satisfies the condition (4) of Theorem \ref{maintheorem}.
Then $G$ belongs to the following list.
\[
\begin{tabular}{c|cccccc}
$G$ & $D_6$ & $D_{10}$ & $D_{12}$ & $\mathfrak{A}_4$ & $\mathfrak{A}_{3,3}$ & $C_3 \times \mathfrak{S}_3$\\ \hline
$|G|$ & $6$ & $10$ & $12$ & $12$ & $18$ & $18$ \\
\end{tabular}
\]
\[
\begin{tabular}{c|ccccccc}
$G$ &  $\mathrm{Hol}(C_5)$ & $C_2\times \mathfrak{A}_4$ & $\mathfrak{S}_4$ & $C_3^2\rtimes C_4$ & $\mathfrak{S}_{3,3}$ & $N_{72}$ \\ \hline
$|G|$ & $20$ & $24$ & $24$ & $36$ & $36$ & $72$ \\
\end{tabular}
\]
\end{prop}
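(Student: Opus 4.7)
The plan is to exploit the Fitting-subgroup technology introduced just above the proposition, together with the tight numerical restrictions of Corollary \ref{order}: every element of $G$ has order at most $6$, and $|G|$ divides $2^{3}\cdot 3^{2}\cdot 5$. Let $F$ denote the Fitting subgroup of $G$; since $G$ is non-nilpotent solvable, $F$ is proper, $C_G(F)=Z(F)$, and we have an injection $\varphi\colon G/F\hookrightarrow \mathrm{Out}(F)$. The exact sequence $1\to F\to G\to G/F\to 1$ splits by Schur--Zassenhaus whenever $\gcd(|F|,|G/F|)=1$, and the overall strategy is to enumerate the pair $(F,G/F)$, then reconstruct $G$, then verify the Mathieu condition (4).

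First I would enumerate the candidates for $F$. Being nilpotent and inheriting the hypotheses from $G$, $F$ must appear in Proposition \ref{nilpotent}, i.e., $F\in\{C_n:2\le n\le 6\}\cup\{C_2^2,C_3^2,C_2^3,C_2\times C_4,D_8\}$. For each such $F$ I would compute $\mathrm{Out}(F)$ explicitly and enumerate the subgroups $Q\le\mathrm{Out}(F)$ that could arise as $G/F$, using the order bound from Corollary \ref{order} together with the exponent constraint $\mathrm{exp}(G)\le 6$ (this rules out many actions: e.g.\ a faithful $C_4$-action on $C_5$ giving $\mathrm{Hol}(C_5)$ is compatible, while a full $\mathrm{GL}(2,\mathbb{F}_3)$-action on $C_3^2$ is not, because it would produce elements of order $8$). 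Then I would reconstruct $G$ from each pair $(F,Q)$: coprime orders give a unique semidirect product per conjugacy class of lifts $Q\to\mathrm{Aut}(F)$, while non-coprime cases require an explicit check, discarding extensions that introduce elements of order $7$, $8$, or a multiple of $6$ exceeding $6$ itself.

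For each surviving candidate $G$, I would verify condition (4) of Theorem \ref{maintheorem}: the $2$-Sylow embeds into $C_2\times D_8$ (automatic from Corollary \ref{order}); $G\not\simeq Q_{12}$ (the explicit exclusion); and the small Mathieu character $\mu$ of Table $(\ast\ast)$ decomposes over $\mathrm{Irr}(G)$ with non-negative integer multiplicities and with $\dim V^G=(\mu,\mathbf{1}_G)\ge 3$. This last check is an inner-product computation against each irreducible character of $G$. The main obstacle will be the bookkeeping in this verification step, particularly for the larger candidates $N_{72}$, $\mathfrak{S}_{3,3}$, and $C_3^2\rtimes C_4$, where the character tables are sizable; a secondary subtlety is ensuring that in the enumeration no non-split extension or non-conjugate action is omitted. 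Groups such as $Q_{12}$, $Q_{16}$, $\mathrm{SL}(2,\mathbb{F}_3)$, and $C_3\times Q_8$, which superficially meet the order and exponent bounds, must be eliminated either by the explicit $Q_{12}$ exclusion, by failing the $2$-Sylow embedding into $C_2\times D_8$, or by producing a negative inner product $(\mu,\chi)$ against some irreducible $\chi$ or an invariant dimension below $3$. The resulting list will coincide with the twelve groups displayed in the proposition.
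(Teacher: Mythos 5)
Your proposal follows essentially the same route as the paper: run through the candidates for the Fitting subgroup $F$ supplied by Proposition \ref{nilpotent}, use the injection $G/F\hookrightarrow\Out(F)$ together with the order and exponent bounds of Corollary \ref{order}, and split the extension in the coprime cases to reconstruct $G$. The only difference is your closing character-theoretic verification of condition (4) for each candidate, which the paper omits (it is not needed for the stated implication) and which in any case would not change the list.
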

\begin{proof}
Let $F$ be the Fitting subgroup. The isomorphism class of $F$ belongs to the list of Proposition \ref{nilpotent},
so we give separate considerations. We note that $\Out (F)=\Aut (F)$ if $F$ is abelian.

\noindent {\bf{Case: $F$ is cyclic }} In the table below, $-1$ denotes the inversion $g\mapsto g^{-1}\, (g\in F)$.
\begin{center}\vspace{-2mm}
\begin{tabular}{c|c|c|c|c|c}
$F$        &$C_2$&$C_3$&$C_4$&$C_5$&$C_6$ \\ \hline
$\Aut (F)$ &$\{ 1\}$ &$\{\pm 1\}$ &$\{\pm 1\}$ & $C_4$&$\{\pm 1\}$ \\ \hline
$G$        &  & $D_6$ & & $D_{10}$, $\mathrm{Hol}(C_5)$ & $D_{12}$ \\
\end{tabular}
\end{center}\vspace{-2mm}
For $F=C_2,C_4$ all extensions \eqref{F} are nilpotent by order reasoning.
For $F=C_3,C_5$, \eqref{F} splits and we get the table. Here $\mathrm{Hol}(C_5)$ denotes
the holomorph $C_5\rtimes \Aut (C_5)$.
For $F=C_6$, we get a split extension $D_{12}$. the other non-split extension $Q_{12}$ is 
not allowed by the assumption.

\noindent {\bf{Case: $F=C_2^2$ }} 
We have $\Aut (F)=\mathfrak{S}_3$. 
Since $G$ is non-nilpotent, it has an element $x$ of order $3$. 
Then $F$ and $x$ generate a subgroup $H$ isomorphic to $\mathfrak{A}_4$. 
If further the inclusion $H\subset G$ is proper, $H$ has index two and is normal. 
In this case, such $G$ is isomorphic to $\mathfrak{S}_4$ or $C_2\times 
\mathfrak{A}_4$, but in the latter group $C_2^2$ is not the Fitting subgroup.

\noindent {\bf{Case: $F=C_3^2$ }}
We have $\Aut (F)\simeq \GL (2,\mathbb{F}_3)$ which has order $48=2^4 3$.
This group has the semi-dihedral group $SD_{16}$ as its $2$-Sylow subgroup,
\[SD_{16}=\langle g,x\mid g^8=x^2=1, xgx^{-1}=g^3\rangle.\]
By Corollary \ref{order}, the extension \eqref{F} splits and $G/F$ is a subgroup of $SD_{16}$. 
Since the maximal subgroups of $SD_{16}$ are $C_8, D_8, Q_8$ and all are characteristic,
we see that isomorphic subgroups of order $8$ in $\GL (2,\mathbb{F}_3)$ are conjugate.
In view of Proposition \ref{nilpotent}, we get the unique extension
$G\simeq C_3^2\rtimes D_8=N_{72}$ if $|G|$ is maximal.
If $|G/F|=4$, we get unique extensions $C_3^2\rtimes C_4, C_3^2\rtimes C_2^2\simeq \mathfrak{S}_{3,3}$.
If $|G/F|=2$, we have two extensions which are isomorphic to $\mathfrak{A}_{3,3}, C_3\times \mathfrak{S}_3$. 

\noindent {\bf{Case: $F=C_2^3$ }}
We have $\Aut (F)=\GL(3,\mathbb{F}_2)$, which is the simple group of order $168=2^3\cdot 3\cdot 7$. 
By Corollary \ref{order}, (\ref{F}) splits with $G/F\simeq C_3$. Since $3$-Sylow 
subgroups in $\GL(3,\mathbb{F}_2)$ are conjugate, 
we get the unique extension $G\simeq C_2\times \mathfrak{A}_4$. 

\noindent {\bf{Case: $F=C_2\times C_4,\,D_8$ }}
In these cases we have $\Aut (F)\simeq D_8$, hence 
we get no non-nilpotent groups. 

This completes the classification of possible groups. It is not difficult to check 
that every groups are in $\mathfrak{S}_6$, and 
we have proved our theorem.
\end{proof}


\subsection{Proof of (2)$\Rightarrow$(3)}\label{2to3}
By condition, $G\neq \mathfrak{S}_6$. By \cite{ATLAS}, $\mathfrak{S}_6$ has 
four isomorphism classes of maximal subgroups 
$\mathfrak{A}_6, \mathfrak{S}_5, N_{72}, C_2\times \mathfrak{S}_4$, the first three 
of which readily satisfy (3). 
Thus we may assume $G\subset C_2\times \mathfrak{S}_4$. 
Again by the order condition $G$ is a proper subgroup. 
A standard argument shows that $C_2\times \mathfrak{S}_4$ has 
maximal subgroups $\mathfrak{S}_4, C_2\times \mathfrak{S}_3, C_2\times \mathfrak{A}_4, C_2\times 
D_8$. The first two are subgroups of $\mathfrak{S}_5$. 
The third one is in the list of (3).
Thus we may assume $G\subset C_2\times D_8$.
Again by the order condition $G$ is a proper subgroup, and the maximal subgroups of $C_2\times D_8$ are 
$C_2^3, D_8, C_2\times C_4$. The first two groups are subgroups of $C_2\times \mathfrak{A}_4$
and $N_{72}$ respectively.
The last group $C_2 \times C_4$ is the final ingredient in (3), so our assertion holds.\\


\section{Tame automorphisms in positive characteristic}\label{pc}

Let $k$ be an algebraically closed field of positive characteristic $p>0$. 
Recall that an  Enriques surface $S$ over $k$ is characterized by the 
numerical equivalence $K_S\equiv 0$ and the second Betti number $b_{2}(S)=10$, including $p=2$.
Let $G$ be a finite group of tame automorphisms acting on $S$. 
Since $\dim H^0(S,\mathcal{O}_S(2K_S))=1$ and $\dim H^*(S,\Q_l)=12\ (l\neq p)$, 
the same definitions as Definition \ref{semi-sympl}
and \ref{Maction} (2) make sense. Hence we can speak of Mathieu automorphisms over $k$ as well.
In this section we use a result of Serre \cite{serre} to prove Theorem~\ref{main3}.
First we remark the following.

\begin{prop}\label{positive_char2}
In characteristic $p\geq 11$, any semi-symplectic finite group action on an Enriques surface is tame.
\end{prop}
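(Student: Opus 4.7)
The plan is to reduce the statement to a result on the covering $K3$ surface and then invoke Serre's bound. Let $G$ be a finite group acting on $S$ semi-symplectically, and assume toward contradiction that some element $g \in G$ has order $p \geq 11$. Since $p \neq 2$, the $K3$-cover $\pi \colon X \to S$ exists as an étale double cover, and I would first show that the cyclic group $\langle g \rangle$ lifts to automorphisms of $X$: any lift $\tilde g$ of $g$ satisfies $\tilde g \varepsilon \tilde g^{-1} = \varepsilon$ (since $\tilde g$ normalizes the Galois group of the cover), so $\langle \tilde g, \varepsilon \rangle$ is a group of automorphisms of $X$ of order $2p$ or $p$ according to whether the two lifts $\tilde g, \tilde g \varepsilon$ both have order $p$ or one has order $2p$.

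Next I would identify the correct symplectic lift. Via $\pi^*$, the space $H^0(X, \omega_X)$ is identified with a square root of $H^0(S, \mathcal{O}_S(2K_S))$, on which the two lifts of $g$ act by opposite scalars whose squares equal $1$ (since $g$ is semi-symplectic). One lift therefore acts trivially on $\omega_X$ and the other by $-1$; because $p$ is odd, the unique lift of order $p$ — call it $\varphi$ — is the symplectic one. (This is a verbatim analogue of Proposition \ref{ssp}, and the tameness of $g$ ensures the argument goes through unchanged in characteristic $p \geq 11$.) Thus we obtain a symplectic automorphism $\varphi \in \operatorname{Aut}(X)$ of prime order $p \geq 11$.

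Now I would apply Serre's result \cite{serre} on orders of tame symplectic automorphisms of $K3$ surfaces in positive characteristic. His bound rules out symplectic automorphisms of order $p$ on any $K3$ surface once $p \geq 11$ (the list of admissible prime orders, extending Mukai's classification in characteristic zero, stops at $7$). This contradicts the existence of $\varphi$ and forces $p \nmid |G|$, so $G$ acts tamely.

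The main obstacle is making sure Serre's statement applies in precisely this form: one must check that the tameness of $\varphi$ (which is automatic because we chose $\varphi$ of order coprime to nothing — rather, $\varphi$ has order $p$ equal to the characteristic and is therefore \emph{wild}) is handled correctly. The cleanest route is actually to invoke Serre's bound on wild symplectic $p$-automorphisms of $K3$ surfaces, which excludes $p \geq 11$. Everything else — existence of the $K3$-cover, commutativity with $\varepsilon$, and the parity argument isolating the symplectic lift — is a routine transcription of the characteristic-zero arguments in Section \ref{preliminary} using étale cohomology in place of singular cohomology.
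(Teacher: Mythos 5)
Your reduction to the $K3$-cover is the same first step as the paper's: lift the order-$p$ automorphism to $X$, note it commutes with $\varepsilon$, and observe that the lift is a \emph{wild} automorphism of a $K3$ surface. But your final step contains a genuine gap. The bound you want to invoke is not due to Serre (the cited paper \cite{serre} is the lifting theorem for tame actions, used elsewhere in Section \ref{pc}); it is Dolgachev--Keum \cite[Theorem 2.1]{DK09}, and it says that wild automorphisms of $K3$ surfaces exist only in characteristic $p\leq 11$ --- it does \emph{not} exclude $p=11$. Indeed, $K3$ surfaces in characteristic $11$ with symplectic automorphisms of order $11$ do exist (this is precisely the subject of \cite{order11}), so the case $p=11$ survives your argument and the proof is incomplete exactly where the statement is sharpest.

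The paper closes this gap with a separate cohomological argument for $p=11$: by \cite[Lemma 2.3]{order11} the invariant subspace of $\tilde\sigma$ on $H^2_{\mathrm{et}}(X,\Q_l)$ is $2$-dimensional, and choosing $l$ (e.g.\ $l=2$) so that $T^{10}+\cdots+1$ is irreducible over $\Q_l$ forces $H^2_{\mathrm{et}}(X,\Q_l)$ to decompose into $\tilde\sigma$-irreducibles of dimensions $1,1,10,10$. If $\tilde\sigma$ commuted with $\varepsilon$ it would preserve the $10$-dimensional subspace $H^2_{\mathrm{et}}(X,\Q_l)^{\varepsilon}$, which contains a $\tilde\sigma$-invariant class (an invariant polarization pulled back from $S$); no sub-sum of $\{1,1,10,10\}$ containing a trivial summand adds up to $10$, a contradiction. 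You would need to supply this (or some equivalent) argument. A minor further point: your discussion of ``identifying the symplectic lift'' via the action on $\omega_X$ is not needed here and is somewhat confused in the wild setting --- an automorphism of order $p=\operatorname{char} k$ automatically acts trivially on $H^0(\omega_X)$ since $k$ has no nontrivial $p$-th roots of unity; what matters is only that the lift is wild, not which lift is symplectic.
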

\begin{proof}
Assume $p=\mathrm{char}\ k\geq 11$ and  
$\sigma$ is an automorphism of order $p$
of an Enriques surface $S$ over $k$. Then it lifts to the canonical $K3$-cover $X$
and commutes with the covering involution $\varepsilon$. 
The lift, which we denote by $\tilde\sigma$, is a wild automorphism of a $K3$ surface,
hence by \cite[Theorem 2.1]{DK09}, $p= 11$ follows.
Moreover, by \cite[Lemma 2.3]{order11}, $\dim H^2_{\mathrm{et}}(X,\Q_l)^{\sigma}=2$
for all $l\neq 11$. By choosing a prime $l$ so that the cyclotomic polynomial 
$T^{10}+\cdots+1$ is irreducible over $\Q_l$, this shows that the second cohomology 
is a sum of four irreducible modules of dimension 1, 1, 10, 10 over $\Q_l$. 
($l=2$ suffices, for example.)
But since $H^2_{\mathrm{et}}(X,\Q_l)^{\varepsilon}$ is 10 dimensional and contains 
a $\tilde\sigma$-invariant class, $\tilde\sigma$ and $\varepsilon$ cannot commute. 
Therefore we obtain a contradiction.
\end{proof}

More explicitly Theorem~\ref{main3} is stated as follows.
\begin{thm}\label{positive_char}
A finite group $G$ has a tame Mathieu action on some Enriques surface $S$ over $k$ if and only if 
\begin{enumerate}
\item $G$ satisfies the same conditions as $(1)$-$(4)$ of Theorems \ref{main} and \ref{main2} when $\mathrm{char}\ k\geq 7$. 
\item $G$ is isomorphic to a subgroup of $N_{72}$, $\mathfrak{S}_4$, $C_2\times \mathfrak{A}_4$ or
$C_2\times C_4$ when $\mathrm{char}\ k=5$. 
\item $G$ is isomorphic to a subgroup of $\mathrm{Hol}(C_5)$, $D_8$, $C_2^3$ or $C_2\times C_4$ when
$\mathrm{char}\ k=3$.
\item $G$ is isomorphic to a subgroup of $C_3^2$ or $C_5$ when $\mathrm{char}\ k=2$. 
\end{enumerate}
\end{thm}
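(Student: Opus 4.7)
The plan is to reduce Theorem~\ref{main3} to the characteristic-zero Theorem~\ref{main} by lifting tame group actions. For the implication (a tame Mathieu action exists) $\Rightarrow$ condition~(2), I would proceed as follows. Given a tame action of $G$ on an Enriques surface $S$ over $k$, apply the lifting theorem of \cite{serre} to produce a smooth projective family $\mathcal{S}\to \mathrm{Spec}\,W(k)$ with an equivariant $G$-action whose special fibre recovers $(S,G)$. Since the Enriques conditions $b_2=10$ and $K\equiv 0$ are preserved under smooth proper deformation, the generic fibre $\mathcal{S}_{\eta}$ is an Enriques surface over $\mathrm{Frac}\,W(k)$, which after choosing an embedding base-changes to an Enriques surface over $\C$ carrying a $G$-action. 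Proper-smooth base change for $\ell$-adic cohomology ($\ell\neq p$) identifies the $G$-representations on $H^{*}_{\mathrm{et}}(S,\Q_{\ell})$ and on $H^{*}_{\mathrm{et}}(\mathcal{S}_{\eta}\otimes \C,\Q_{\ell})$, so the Lefschetz numbers agree. An analogous identification on the one-dimensional space $H^{0}(2K)$ transfers semi-symplecticity. Thus the lifted action is Mathieu over $\C$, and Theorem~\ref{main} yields (2).

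For the converse, assume $G$ satisfies (2) and $(|G|,p)=1$. By Theorem~\ref{main2}, $G$ embeds into one of the five maximal groups $\mathfrak{A}_{6}$, $\mathfrak{S}_{5}$, $N_{72}$, $C_{2}\times \mathfrak{A}_{4}$, $C_{2}\times C_{4}$, so it suffices to realize a Mathieu action of $G$ from the explicit constructions of Sections~\ref{examples}--\ref{3->1}. Each of those constructions is given by equations defined over a ring of algebraic integers whose bad primes can be read off from the defining data. The $\mathfrak{S}_{5}$-surface \eqref{A9+A1} is defined over $\Z$ up to inverting a small integer; the Hesse--Godeaux family \eqref{Hesse-Godeaux} carrying the $N_{72}$- and $\mathfrak{A}_{6}$-actions requires a primitive cube root of unity and $\sqrt{3}$, and so is available for $p\neq 3$; the tri-quadric model of Example~\ref{H192} carrying the $C_{2}\times \mathfrak{A}_{4}$- and $C_{2}\times C_{4}$-actions requires $\sqrt{-1}$, and so is available for $p\neq 2$. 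In each case, reduction modulo a prime over $(p)$ is smooth (Jacobian criterion), Enriques, and Mathieu, the last by proper-smooth base change for $\ell$-adic cohomology.

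The main obstacle will be covering the small characteristics $p=2$ and $p=3$, where several of the models degenerate; fortunately tameness combined with (2) cuts down the candidate groups severely. In characteristic $2$ one only needs $G\subset C_{3}^{2}$ or $G\simeq C_{5}$: the former is realized by the Mordell--Weil action on the Hesse-type elliptic Enriques surface (defined away from $p=3$), and the latter by a $C_{5}\subset \mathfrak{S}_{5}$ subaction on \eqref{A9+A1}. In characteristic $3$, the groups left by~(2) and tameness are subgroups of $\mathrm{Hol}(C_{5})$, $D_{8}$, $C_{2}^{3}$ or $C_{2}\times C_{4}$; all of these can be realized using the $\mathfrak{S}_{5}$-surface~\eqref{A9+A1} for the $5$-parts and the tri-quadric $H_{192}$-model of Example~\ref{H192} (available in characteristic $3$) for the $2$-parts. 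The technical core of the verification, which I would carry out by a careful case analysis matching each group against an available model, is to confirm for each pairing that the chosen model reduces to a smooth Enriques surface with the predicted Lefschetz numbers; the resulting refined list reproduces the characteristic-by-characteristic statement of Theorem~\ref{positive_char}.
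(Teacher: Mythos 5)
Your treatment of odd characteristic is essentially the paper's: lift via Serre's theorem to get the ``only if'' direction from the complex classification, then check which of the explicit models of Examples~\ref{Ohashi}, \ref{Mukai}, \ref{KOZex}, \ref{H192} survive reduction at each prime (the $\mathfrak{S}_5$ quartic degenerates at $p=2,5$; the Hesse--Godeaux surface with $\lambda,\mu=1\pm\sqrt 3$ at $p=2,3$; the tri-quadric at $p=2$), and let tameness remove the groups of order divisible by $p$. That part is sound.

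The genuine gap is characteristic $2$, in both directions. For the ``only if'' direction you apply Serre's lifting theorem uniformly, but its hypothesis $H^2(S,\mathcal O_S)=H^2(S,\Theta_S)=0$ is not available for Enriques surfaces in characteristic $2$ (the paper says explicitly that the conditions are ``often invalid'' there), so the classification cannot be imported from $\C$ by lifting. The paper replaces this with a direct geometric argument: for a tame semi-symplectic $\sigma$ of odd prime order $q$, the quotient $Y=S/\sigma$ has rational double points of type $A_{q-1}$, carries a nowhere vanishing bicanonical form, and Noether's formula forces $c_2(\tilde Y)\le 24$; the Euler number count $12=q(c_2(\tilde Y)-qr)+r$ then pins down $(q,r)=(3,3)$ or $(5,2)$ (order $11$ is excluded by a rank count in $NS(\tilde Y)$, and orders $9,15,25$ by the same method), after which the group theory of Section~\ref{equivalence} yields $C_3^2$ or $C_5$. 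Nothing in your proposal supplies this. For the ``if'' direction in characteristic $2$, both of your proposed realizations fail as stated: the quartic \eqref{A9+A1} degenerates at $p=2$ because $(1:1:1:1:1)$ lies on the surface and is fixed by the Cremona involution, so it cannot furnish the $C_5$-action (the paper instead uses the nodal quartic \eqref{VI}, whose Cremona quotient resolves to a non-classical Enriques surface with $\mathfrak{S}_5$-action); and the Hesse--Godeaux construction cannot be used naively for $C_3^2$ since the involution $(x:y)\mapsto(x:-y)$ is trivial in characteristic $2$ --- one must replace it by the action of the non-reduced group scheme $\boldsymbol{\mu}_2$, check that the $12$ nodes of $X_{\lambda,\mu}$ disappear in the quotient, and verify that $X/\boldsymbol{\mu}_2$ is a smooth classical Enriques surface. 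These characteristic-$2$ arguments are the technical core of the theorem and are missing from your proposal.
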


\subsection{Theorem \ref{positive_char} in odd characteristics}

First we prove the `only if' part of Theorem \ref{positive_char}. 
Serre's theorem \cite[Theoreme 5.1]{serre} says that 
if $X$ is a smooth projective variety over $k$ with a tame automorphism group $G$ with
$H^2(X,\mathcal{O}_X)=H^2(X,\Theta_X)=0$, then $(X,G)$ lifts to characteristic zero.
If $\mathrm{char}\ k\geq 3$ and $S$ is an Enriques surface over $k$, 
the assumptions are satisfied and hence $(S, G)$ lifts smoothly to characteristic zero.
By our result over $\C$, we see that $G$ is one of the $25$ groups of Theorems \ref{main} and \ref{main2}. By the condition of tameness, we can deduce the classification of 
groups from Section \ref{equivalence}. This gives the proof of `only if' part of Theorem 
\ref{positive_char} for odd characteristics.

Now we discuss the reductions modulo $p$ of our maximal group actions.

\begin{enumerate}
\item Example \ref{Ohashi} degenerates in characteristic $p=2,5$ since $\varepsilon$ 
becomes to have 
a fixed point $(1:1:1:1:1)\in \P^4$. In other characteristics, the same 
equation defines an Enriques surface and the group $\mathfrak{S}_5$ acts in the same way.
\item The surface \eqref{Hesse-Godeaux} with $\lambda,\mu =1\pm \sqrt{3}$
becomes reducible in characteristics $p=2,3$. 
In fact, it contains the 2-plane $x_i=y_i$, $i=0,1,2$.
In other characteristics, we can check that the surface $X$ (with
$\lambda,\mu=1\pm \sqrt{3}$) is smooth and $\varepsilon$ has no fixed points. 
Moreover, the studies in Subsections \ref{subsection:N72} and \ref{subsection:A6}
both hold true without changes. Therefore the Mathieu actions by $N_{72}$ and $\mathfrak{A}_6$ 
exist in all characteristics $p\geq 5$. 
\item The equation of Example \ref{H192} becomes reducible in $p=2$, 
but in other characteristics $p\geq 3$ the surface is smooth and the group action
by $H_{192}$ remains well-defined. Therefore we also have the Mathieu actions
by $C_2\times C_4$ and $C_2\times \mathfrak{A}_4$ in characteristic $p\geq 3$.
\end{enumerate}


Now we prove the `if' part.
If $\mathrm{char}\ k\geq 7$, then all the Examples \ref{Ohashi}, \ref{Mukai}, \ref{KOZex}
and \ref{H192} persist as Enriques surface. Therefore all groups satisfying the conditions of 
Theorem \ref{main} and \ref{main2} have Mathieu actions on some Enriques surface over $k$.
In $\mathrm{char}\ k=5$, since $\mathfrak{S}_4$ is a subgroup of $\mathfrak{A}_6$,
our result follows from the above study on Examples \ref{Mukai}, \ref{KOZex} and \ref{H192}. 
In $\mathrm{char}\ k=3$, since $\mathrm{Hol} (C_5)$ and $D_8$ are subgroups of $\mathfrak{S}_5$ and $C_2^3$ is contained in $C_2\times \mathfrak{A}_4$, 
our result 
follows from the above study on Examples \ref{Ohashi} and \ref{H192}. 
This finishes the proof of Theorem \ref{positive_char} for odd characteristics.\\

\subsection{Theorem \ref{positive_char} in characteristic 2}
Finally we show Theorem \ref{positive_char} (4).  
Since the conditions of Serre's theorem are often invalid, 
we give a direct treatment of the `only if' part modifying the proof of \cite[Proposition 2]{PSS}.

Let $\pi \colon S\rightarrow Y=S/\sigma$ be the quotient morphism 
by a tame semi-symplectic automorphism $\sigma$.
Let $P\in S$ be a fixed point of $\sigma$.
By the tameness, the action is locally linearizable at $P$ and we have the 
complete reducibility on $\mathcal{O}_{S,P}$ as in \cite[(1.1)]{mukai88}.
Therefore, $\pi (P)$ is a rational double point and $Y$ is smooth except those isolated singularities.  
Moreover, there is a nowhere vanishing global bi-canonical form on $Y$, 
since $\sigma$ is semi-symplectic. 
In particular, we have $K_{\tilde Y}\equiv 0$ for the minimal resolution $\tilde{Y}$ of $Y$.
Hence we have
\begin{equation}\label{noether}
c_2(\tilde{Y}) = 12 \chi(\mathcal O_{\tilde Y}) \le 24
\end{equation}
by Noether's formula.

\begin{lem}\label{no3,5}
Let $\sigma$ be a tame semi-symplectic automorphism in characteristic $2$ 
with order $q$, an odd prime number. Then one of the following holds. 
\begin{enumerate}
\item $\sigma$ is of order 3 and has three fixed points on $S$.
\item $\sigma$ is of order 5 and has two fixed points on $S$.
\end{enumerate}
In particular, $\sigma$ is automatically of Mathieu type.
\end{lem}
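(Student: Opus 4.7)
The plan is to leverage the setup already established immediately before the lemma (RDP quotients, $K_{\tilde Y}\equiv 0$, $c_2(\tilde Y)\le 24$) by first identifying the local type of each fixed point, then solving for $(q,r)$ via an Euler-characteristic identity, and finally ruling out the spurious outlier by a cohomological argument. At a fixed point $P$, semi-symplecticity together with oddness of $q$ forces $\det d\sigma_P = 1$ (since $(\det d\sigma_P)^2 = 1$), so $d\sigma_P$ has eigenvalues $\zeta,\zeta^{-1}$ for a primitive $q$-th root of unity $\zeta$; in particular $P$ is isolated in $\Fix(\sigma)$ and $\pi(P)$ is of type $A_{q-1}$. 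Letting $r$ denote the number of fixed points, Euler-characteristic additivity combined with the étaleness of $\pi$ away from the fixed points and the standard formula for $A_{q-1}$ resolutions gives
\[
\e(\tilde Y) = \frac{12-r}{q} + rq.
\]
The Bombieri--Mumford classification in positive characteristic pins down $c_2(\tilde Y)\in\{0,12,24\}$, and the value $0$ is impossible because $r\ge 1$ and $q\ge 3$ force $\e(\tilde Y)>0$. Equating $\e(\tilde Y)=12$ gives $r(q+1)=12$, whose odd-prime solutions are $(q,r)\in\{(3,3),(5,2),(11,1)\}$, and equating $\e(\tilde Y)=24$ gives $r(q^2-1)=12(2q-1)$, which has no integer solutions for odd prime $q$ (a short case check, noting that the right-hand side already forces $q\le 23$).

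To exclude $(q,r)=(11,1)$, I pass to $\ell$-adic cohomology with $\ell\ne 2,11$. The characteristic polynomial of $\sigma^*$ acting on the $10$-dimensional space $H^2_{\mathrm{et}}(S,\Q_\ell)$ is a product of the cyclotomic polynomials $\Phi_1(T)=T-1$ and $\Phi_{11}(T)=T^{10}+\cdots+1$ whose degrees sum to $10$; hence either $\sigma^*$ acts trivially on $H^2$ (trace $10$) or $H^2$ is the irreducible $\Phi_{11}$-eigenspace (trace $-1$). The Lefschetz trace formula yields $1 = L(\sigma) = 2 + \tr(\sigma^*\mid H^2)$, forcing the second alternative and hence $H^2_{\mathrm{et}}(S,\Q_\ell)^{\sigma}=0$. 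But for any ample class $H$ on $S$, the orbit sum $\sum_{i=0}^{10}(\sigma^i)^*H$ is a nonzero $\sigma$-invariant ample divisor class, whose image in $H^2_{\mathrm{et}}(S,\Q_\ell)$ is a nonzero element of $H^2_{\mathrm{et}}(S,\Q_\ell)^{\sigma}$, a contradiction.

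Only $(q,r)\in\{(3,3),(5,2)\}$ survive, and in both cases $L(\sigma)=r$ matches the Lefschetz number prescribed by table $(\ast\ast)$, so $\bracket{\sigma}$ acts in Mathieu fashion. The main obstacle is the exclusion of $q=11$: the Euler-characteristic count alone is consistent with this case, and the essential input is the cohomological argument combining the irreducibility of $\Phi_{11}$ over $\Q$ with the invariance of an averaged ample class; all other ingredients (tame local linearization, the Bombieri--Mumford classification, elementary divisibility) are routine.
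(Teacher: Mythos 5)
Your proof is correct, and for the main counting step it coincides with the paper's: both arguments show each fixed point is isolated with quotient singularity of type $A_{q-1}$, derive the Euler-number identity $c_2(\tilde Y)=(12-r)/q+rq$ (the paper's equation \eqref{euler}), and combine it with $c_2(\tilde Y)=12\chi(\mathcal O_{\tilde Y})\in\{0,12,24\}$ to reduce to $(q,r)\in\{(3,3),(5,2),(11,1)\}$. Where you genuinely diverge is in killing $(q,r)=(11,1)$. The paper stays on the quotient side: since $\tilde Y$ must be an Enriques surface with $\rho(\tilde Y)=10$, the ten exceptional $(-2)$-curves of the $A_{10}$ resolution would span a negative definite rank-$10$ sublattice of $NS(\tilde Y)$, which is impossible for a lattice of signature $(1,9)$. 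You instead work on $S$ itself: Lefschetz gives $\tr(\sigma^*\mid H^2_{\mathrm{et}}(S,\Q_\ell))=-1$, which forces $H^2_{\mathrm{et}}(S,\Q_\ell)^{\sigma}=0$ and contradicts the invariance of the averaged ample class. This is essentially the mechanism the paper uses elsewhere (in Proposition \ref{positive_char2}) and it buys you an argument that never mentions the geometry of $\tilde Y$; the paper's lattice argument is buying the dual advantage of not needing any statement about eigenvalue multiplicities. One small point of care in your version: to conclude that the eigenvalue-$1$ multiplicity is $0$ or $10$ you should either pick $\ell$ with $\Phi_{11}$ irreducible over $\Q_\ell$ (e.g.\ $\ell=7$; note $\ell=2$ is excluded), or simply observe that the trace identity $m_0+\sum_{\zeta\neq 1}m_\zeta\zeta=-1$ together with the $\Q$-linear independence of $\zeta,\dots,\zeta^{10}$ already forces $m_0=0$; as written, the claim that the characteristic polynomial is literally $\Phi_1^{a}\Phi_{11}^{b}$ over $\Q_\ell$ needs one of these justifications. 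Your discussion of $c_2(\tilde Y)=24$ via $r(q^2-1)=12(2q-1)$ and of $c_2(\tilde Y)=0$ is a correct (and more explicit) rendering of the case check the paper leaves implicit.
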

\begin{proof}
Let $\pi \colon S\rightarrow Y=S/\sigma$ and   $P\in \Fix\sigma$ be as above.
Then $\pi (P)$ is a rational double point of type $A_{q-1}$. 
Denoting the number of fixed points by  $r$, we have 
\begin{equation}\label{euler}
12=c_2(S)=q(c_2(\tilde{Y})-qr)+r
\end{equation}
The integer solution of \eqref{euler}  exists in the range \eqref{noether}  only when $\tilde{Y}$ is an Enriques surface and
$(q, r) = (3,3), (5,2), (11,1)$. 
But if $q=11$, then the exceptional curves of resolution of the singularity  span the negative definite lattice $A_{10}$ of rank 10 in $NS(\tilde{Y})$, 
a contradiction to $\rho(\tilde{Y})=10$. 
This shows the assertions (1) and (2). The last statement 
follows from the Lefschetz fixed point formula.
\end{proof}

\begin{lem}\label{no9}
There are no automorphisms of order 9, 15 or 25. Therefore 3 and 5 are the only 
possible orders.
\end{lem}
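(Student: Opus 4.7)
The plan is to combine the fixed-point counts of Lemma~\ref{no3,5}, applied to powers of a hypothetical $\sigma$, with the Euler-characteristic bound used in its proof. First, I would observe that a tame semi-symplectic automorphism $\sigma$ of odd order $q$ automatically acts symplectically at each of its fixed points $P$: the action on the trivial line bundle $\mathcal{O}_S(2K_S)$ is globally trivial, so $\det(d\sigma_P)^2=1$, and since $\det(d\sigma_P)$ is a $q$-th root of unity with $q$ odd, $\det(d\sigma_P)=1$. Consequently every singularity of $Y=S/\sigma$ is a cyclic quotient of type $A_{q'-1}$ (where $q'$ is the local order), the minimal resolution $\tilde Y$ satisfies $K_{\tilde Y}\equiv 0$, and Noether's formula forces $\e(\tilde Y)\in\{0,12,24\}$.

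For $\sigma$ of order $15$, set $\tau=\sigma^3$ and $\rho=\sigma^5$; Lemma~\ref{no3,5} gives $|\Fix\tau|=2$ and $|\Fix\rho|=3$. The $\sigma$-action on $\Fix\tau$ factors through $\bracket{\sigma}/\bracket{\tau}\cong C_3$ acting on $2$ elements, and on $\Fix\rho$ through $\bracket{\sigma}/\bracket{\rho}\cong C_5$ acting on $3$ elements; both actions are trivial for order reasons, so $\Fix\sigma\supseteq\Fix\tau\cup\Fix\rho$. But $\gcd(3,5)=1$ forces $\Fix\sigma=\Fix\tau\cap\Fix\rho$, whence $\Fix\tau=\Fix\rho$, contradicting $|\Fix\tau|\ne|\Fix\rho|$. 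For $\sigma$ of order $25$, the same permutation argument shows $|\Fix\sigma|=2$ and $\sigma$ acts freely on the complement; the degree-$25$ \'etale cover then gives $\e(Y\setminus\mathrm{Sing}(Y))=(12-2)/25\notin\Z$, a contradiction.

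For $\sigma$ of order $9$ let $\tau=\sigma^3$; then $\sigma$ permutes $\Fix\tau$ (three points) through $\bracket{\sigma}/\bracket{\tau}\cong C_3$, leaving exactly two possibilities: $r:=|\Fix\sigma|=3$ (trivial action) or $r=0$ (a $3$-cycle). In the first case $Y$ has three $A_8$ singularities and $\sigma$ acts freely outside $\Fix\sigma$, whence $\e(\tilde Y)=(12-3)/9+3+3\cdot 8=28$; in the second case $Y$ has a single $A_2$ singularity coming from the orbit of three $\tau$-fixed points, whence $\e(\tilde Y)=(12-3)/9+1+2=4$. Neither value lies in $\{0,12,24\}$, so both subcases are excluded. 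Combined with Lemma~\ref{no3,5}, these exclusions prohibit every prime-power order other than $3$ and $5$, so any cyclic order is built from $\{3,5\}$; the non-existence of orders $9,15,25$ then confines the allowed orders to $\{1,3,5\}$.

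The subtlest point is the order-$9$ case, where the elementary permutation argument leaves two candidate shapes for $\Fix\sigma$ and both must be eliminated via the finer numerical invariant $\e(\tilde Y)$; the preliminary observation that all arising quotient singularities are of type $A$ (so that their contribution to $\e(\tilde Y)$ is unambiguous) is essential for the computation to go through.
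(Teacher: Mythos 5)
Your argument is correct and follows essentially the same route as the paper: reduce to the fixed points of powers of $\sigma$, identify the quotient singularities as type $A$, and rule out each case by the Euler-number equation \eqref{euler} together with the constraint $c_2(\tilde Y)\in\{0,12,24\}$ from \eqref{noether} (your value $28$ in the order-$9$, three-fixed-points case plays the role of the paper's ``rank of $3A_8$ is too large''). The only genuine variation is your order-$15$ case, where the purely set-theoretic observation $\Fix\tau=\Fix\sigma=\Fix\rho$ replaces a numerical computation --- a slightly cleaner shortcut for that subcase.
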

\begin{proof}
If $\sigma$ is of order $9$, then $\Fix(\sigma)$ is either empty or coincides with $\Fix(\sigma^3)$.
In the former case the equation corresponding to \eqref{euler} is 
\[12=9(c_2(\tilde{Y})-3)+3\]
and $c_2(\tilde{Y})=4$, which is impossible. In the latter case $Y$  has three rational double points of type $A_8$, whose rank is too large. 
Hence automorphisms of order 9 do not exist.
The other cases are treated in the same way.
\end{proof}

Now we are ready to prove the `only if' part.
Assume that a finite group $G$ has a tame action.
By Lemma~\ref{no3,5} and \ref{no9}, $G$ is of Mathieu type.
The discussion in Subsection \ref{4to2} is purely group-theoretic and remains 
true in our setting, too. Hence, we get the three groups by the lists in Propositions \ref{nonsolvable}, \ref{nilpotent} and \ref{solvable}.

Finally we show the existence of actions of $C_3^2$ and $C_5$.
\begin{enumerate}
\item Let $X=X_{\lambda,\mu}$ be the same as \eqref{Hesse-Godeaux} but we assume further that both $\lambda, \mu \in k$ are nonzero in characteristic 2.
Then $X$ has 12 nodes at the intersection of 12 conics defined in Section~\ref{section:Hesse-Godeaux}, for example at $(1:0:0:1:0:0)$ and $(1:1:1:\alpha:\alpha:\alpha)$, etc., with $\alpha=\sqrt{(1-\lambda)/(1-\mu)}$.
Moreover, $X$  is smooth elsewhere.
These nodes correspond to the 12 double points in fibers of the rational elliptic surface \eqref{Hesse cubic}. 
The usual involution 
$\varepsilon\colon (x:y)\mapsto (x:-y)$ of  $\mathbb P^5$
in characteristic $\ne 2$ is replaced 
by the action of the non-reduced group scheme 
$\boldsymbol{\mu}_2= \mathrm{Spec}\ k[t]/(t^2-1)$ defined by $(x:y)\mapsto (x:ty)$. 
A local computation shows that 12 nodes disappear and the quotient $X/\boldsymbol{\mu}_2$ becomes a smooth (classical) Enriques 
surface. 
The group $C_3^2$ acts on $X/\boldsymbol{\mu}_2$ by \eqref{3^2}.
 
\item Let $X$ be the surface defined by the same equation as \eqref{VI} in characteristic 2. 
It has only 10 nodes and by taking the minimal resolution 
of the quotient by the Cremona
involution, we obtain a smooth (non-classical) Enriques surface with $\mathfrak{S}_5$-action.
In particular, it has a $C_5$ action.
\end{enumerate}
Thus the proof of Theorem~\ref{positive_char}(=Theorem~\ref{main3}) is completed.
\appendix

\section{Lattice theoretic construction of Mathieu actions}\label{ltc}
We give a lattice theoretic proof of Theorem~\ref{veryMathieu}.

In \cite[Appendix]{mukai98},  for each  $G$  of the eleven groups  ($\ast$), 
a symplectic action on a K3 surface is constructed using 
\begin{(enumerate)}
\item the Niemeier lattice  $N$  of type $(A_1)^{24}$,
\item the action of the Mathieu group  $M_{24}$ on  $N$,
\item an embedding of $G$  into the Mathieu group  $M_{23}$,  and 
\item the Torelli type theorem for K3 surfaces.
\end{(enumerate)}
Here $N$  is even, unimodular and contains the root lattice 
\begin{equation}\label{A124}
\bigoplus_{i \in \Omega} \Z e_i,\quad (e_i^2)=-2
\end{equation}
as a sublattice of finite ($=2^{12}$) index, where $\Omega$  is the operator domain of  $M_{24}$.
The action $M_{24} \act \Omega$  extends (isometrically) on  $N$.
The key of the proof is to show the existence of a primitive embedding of  $N_G$  in the K3 lattice  $\Lambda \simeq 3U + 2E_8$,
where $N_G$  is the orthogonal complement of the invariant lattice  $N^G \subset N$.

In this appendix, making this construction $C_2$-equivariant, we give another proof of Theorem~\ref{veryMathieu}.
Namely we decompose \eqref{A124} in two parts
\begin{equation}
\bigoplus_{i \in \Omega_+} \Z e_i  \quad {\rm and} \quad \bigoplus_{i \in \Omega_-} \Z e_i,\quad (e_i^2)=-1,\ \forall i \in \Omega=\Omega_+ \sqcup \Omega_-
\end{equation}
with scaling by $1/2$.
Let  $N_{\pm}$  be the lattices obtained by  adding $(\sum_{i \in \Omega_\pm} e_i)/2$ to these.
$N_{\pm}$  is the dual of the root lattice of type  $D_{12}$, 
and $N_{\pm}(2)$  is an integral lattice of discriminant  $2^{10}$.

Let $G_{(6)}, G_{(9)}, G_{(10)} \subset M_{11}$ be the image of the embedding of the three groups $\mathfrak{S}_5$, $N_{72}$, $\mathfrak{A}_6$ in Lemma~\ref{embM11}, respectively.
$G_{(n)}$ decomposes the operator domain $\Omega_+\setminus \{\star\}$ of $M_{11}$ into two orbits of length $n$  and $11-n$.
Let  $\Omega_-$  be the complementary dodecad of  $\Omega_+$.
The following is immediate from the proof of Lemma~\ref{embM11}.
\begin{lem}\label{embM11(2)}
Each $G_{(n)}$  decomposes $\Omega_-$ into two orbits.
Their length are  $\{6,6\}$ when $n=9,10$,  and  $\{2,10\}$ when $n=6$.
\end{lem}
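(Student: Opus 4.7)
The plan is to use the structure provided by the preceding Lemma~\ref{embM11}, namely that each of $\mathfrak{S}_5$, $N_{72}$, $\mathfrak{A}_6$ arises as the stabilizer in $\mathfrak{S}_6$ of a point of the $\mathfrak{S}_6$-orbit $\Omega_i$ with $i=6,10,2$ respectively. In the first case $\star \in \Omega_6 \subset \Omega_+$, forcing the dodecad decomposition $\Omega_+=\Omega_6\cup\Omega_6'$ and $\Omega_-=\Omega_2\cup\Omega_{10}$; in the latter two cases $\star$ lies in $\Omega_{10}$ or $\Omega_2$, forcing $\Omega_+=\Omega_2\cup\Omega_{10}$ and $\Omega_-=\Omega_6\cup\Omega_6'$. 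Since $G_{(n)}\subset \mathfrak{S}_6$, its orbits on $\Omega_-$ refine the partition of $\Omega_-$ into two $\mathfrak{S}_6$-orbits, so it suffices to check that $G_{(n)}$ acts transitively on each of them.

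For $G_{(6)} = \mathfrak{S}_5$ the two components of $\Omega_-$ are $\Omega_2$ and $\Omega_{10}$. The action of $\mathfrak{S}_6$ on the two-element set $\Omega_2$ is necessarily through the sign quotient; since $\mathfrak{S}_5$ contains odd permutations it acts transitively. For $\Omega_{10}$ I identify this $\mathfrak{S}_6$-set with the 10 unordered $3+3$ partitions of the natural 6 letters (the essentially unique faithful transitive 10-set for $\mathfrak{S}_6$), and then the stabilizer of a letter in $\mathfrak{S}_5$ acts on such partitions through the bijection with 3-subsets of the remaining 5 letters, which is transitive. Hence the orbit lengths are $\{2,10\}$.

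For $G_{(10)}=\mathfrak{A}_6$ acting on $\Omega_-=\Omega_6\cup\Omega_6'$, transitivity on $\Omega_6$ is immediate since $\mathfrak{A}_n$ is $(n-2)$-transitive on $n$ letters for $n\geq 3$. Transitivity on $\Omega_6'$ then follows because $\mathfrak{A}_6$ is characteristic in $\mathfrak{S}_6$, so the outer automorphism $\phi$ of $\mathfrak{S}_6$ carries the $\mathfrak{A}_6$-action on $\Omega_6'$ to the $\mathfrak{A}_6$-action on $\Omega_6$. The same outer-automorphism trick handles $G_{(9)}=N_{72}$ on $\Omega_6'$: since $N_{72}$ is the normalizer of a Sylow $3$-subgroup, $\phi(N_{72})$ is again such a normalizer and hence conjugate to $N_{72}$ in $\mathfrak{S}_6$, so its action on the natural 6-set is transitive, which is exactly the $N_{72}$-action on $\Omega_6'$. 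Transitivity of $N_{72}$ on the natural 6-set $\Omega_6$ is direct, since $N_{72}$ contains the block-swap $(1,4)(2,5)(3,6)$ together with $(1,2,3),(4,5,6)$. Hence the orbit lengths are $\{6,6\}$.

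The main obstacle is the $\Omega_6'$-transitivity of $N_{72}$: a purely numerical analysis leaves the partition $3+3$ open as a formal possibility, and ruling it out seems to require either a cycle-type computation (e.g.\ an order-$4$ element of $N_{72}$ of cycle type $(4,2)$ on $\Omega_6$ would have type $(4,1,1)$ on $\Omega_6'$, which is incompatible with a $3+3$ orbit structure) or the cleaner exceptional outer-automorphism argument above.
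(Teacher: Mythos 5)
Your proof is correct and follows exactly the route the paper intends: the paper offers no written proof beyond declaring the lemma ``immediate from the proof of Lemma~\ref{embM11}'', i.e.\ from the identification of $\mathfrak S_5$, $N_{72}$, $\mathfrak A_6$ as stabilizers of points in the $\mathfrak S_6$-orbits $\Omega_6$, $\Omega_{10}$, $\Omega_2$, which is precisely your starting point. You simply supply the transitivity verifications (sign quotient on $\Omega_2$, the $3{+}3$-partition model of $\Omega_{10}$, and the outer-automorphism transfer between $\Omega_6$ and $\Omega_6'$) that the authors leave implicit, and these are all sound.
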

We consider the orthogonal complement  of the invariant lattice for two actions  $G_{(n)} \act N_{\pm}$. 
The following is obvious.

\begin{lem}
Let  $G=G_{(n)} \act N_{\pm}$ $(n=6,9,10)$ be as above.

$(1)$ The orthogonal complement $N_{+, G}$ of the invariant lattice $N_+^G \subset N_+$  is the root lattice of type  $A_{n-1}+A_{10-n}$.

$(2)$ The orthogonal complement $N_{-, G}$ of the invariant lattice $N_-^G \subset N_-$  is a negative definite odd integral lattice of rank $10$.
$N_{-, G}$  contains an index-two sublattice which is of type $A_5 + A_5$ when $n=9,10$  and  $A_1 + A_9$ when $n=6$.
\end{lem}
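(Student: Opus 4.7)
The plan is to read off both lattices from the orbit structure of $G$ on $\Omega_{\pm}$ and to separate the integer-coordinate from the half-integer-coordinate vectors in $N_{\pm}$. First I would note that the glue vector $\chi_{\pm} = \frac12\sum_{i\in\Omega_{\pm}} e_i$ is $G$-invariant, since $G$ permutes $\Omega_{\pm}$, and that the $G$-invariant subspace of $N_{\pm}\otimes\R$ coincides with the span of the characteristic vectors of the $G$-orbits. By Lemma \ref{embM11}, the orbits on $\Omega_+$ have sizes $1, n, 11-n$, so $N_+^G$ has rank $3$ and contains $e_{\star}$; by Lemma \ref{embM11(2)}, the orbits on $\Omega_-$ have sizes $\{a,b\}\in\{\{6,6\},\{2,10\}\}$, so $N_-^G$ has rank $2$. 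In each case $\chi_{\pm}$ lies in this invariant span, so there is no loss in describing the orthogonal complements with respect to the obvious integral basis.

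For assertion (1), any $w\in N_{+,G}$ is orthogonal to $e_{\star}\in N_+^G$, forcing its $\star$-coordinate to vanish. Since a half-integer-coordinate vector in $N_+$ necessarily has nonzero $\star$-coordinate, $w$ must be integral. Orthogonality to the two orbit sums then yields $\sum_{i\in O_j} w_i = 0$ for $j=1,2$, which cuts out precisely the root lattice $A_{n-1}\oplus A_{10-n}$ inside $\bigoplus_{i\in\Omega_+}\Z e_i$.

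For assertion (2) the situation is genuinely richer, because $\Omega_-$ has no fixed point of $G$, so nothing prevents half-integer-coordinate vectors from lying in the orthogonal complement. The integer-coordinate part of $N_{-,G}$ is again a root lattice, namely $A_{a-1}\oplus A_{b-1}$ of rank $10$. Crucially, $a$ and $b$ are both even in each of our three cases, so the equations $\sum_{i\in O'_j} w_i = 0$ also admit half-integer-coordinate solutions; any two such solutions differ by an integer vector in $N_{-,G}$, so they form a single coset modulo $A_{a-1}\oplus A_{b-1}$, which yields index $2$. To see that $N_{-,G}$ is odd, I would exhibit a half-integer vector $x$ with $x_i = \pm\tfrac12$ distributed with equal numbers of each sign on each orbit: it lies in $N_{-,G}$ and has $(x,x) = -\tfrac{a+b}{4} = -3$, which is odd. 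The only real care needed throughout is to track exactly when half-integer-coordinate vectors contribute, and the parity of the orbit sizes $a,b$ renders this step transparent rather than obstructive.
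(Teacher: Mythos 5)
The paper offers no proof of this lemma (it is stated as ``obvious''), and your argument is the natural one the authors surely have in mind: $N_{\pm}\simeq D_{12}^*(-1)$ splits into integer-coordinate and half-integer-coordinate vectors, the rational invariant subspace is spanned by orbit sums, and the fixed point $\star\in\Omega_+$ kills the half-integer part of $N_{+,G}$ while the all-even orbit sizes on $\Omega_-$ admit a half-integer coset in $N_{-,G}$. All of this, including the index-two count and the norm $-3$ vector witnessing oddness, is correct.

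The one claim of the lemma you do not actually verify is that $N_{-,G}$ is an \emph{integral} lattice, and this is not free of content: the ambient lattice $N_-\simeq D_{12}^*(-1)$ is itself only half-integral (two generic half-integer vectors of $D_{12}^*$ pair to an element of $\frac12\Z\setminus\Z$), so exhibiting one vector of odd integral norm shows oddness only \emph{after} integrality is known. The fix is one line with your own generators: $N_{-,G}$ is spanned over $\Z$ by the roots $e_i-e_j$ ($i,j$ in the same $G$-orbit of $\Omega_-$) together with your single half-integer vector $x$; the roots pair integrally with each other, $(x,e_i-e_j)=-(x_i-x_j)\in\Z$ since $x_i,x_j\in\frac12+\Z$, and $(x,x)=-3$. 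Hence all pairings of generators are integers and $N_{-,G}$ is an odd integral lattice. With that sentence added, your proof is complete.
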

  
The Niemeier lattice  $N$  contains  $N_{\pm}(2)$ as a primitive sublattice.
Since $N$  is unimodular we have an isomorphism
\begin{equation}\label{disc isom}
{\rm Disc}\, N_+(2) \simeq {\rm Disc}\, N_-(2)
\end{equation}
of discriminant groups.
This isomorphism is compatible with the actions of $M_{12}$.

Now we recall the modulo 2 reduction $l:= L/2L$ of an integral lattice  $L$.
When  $L$  is even, $l$  is endowed with the quadratic form
\begin{equation*}
q: l \to \Z/2\Z, \quad x \mapsto (x^2)/2
\end{equation*}
with value in  $\F_2 = \Z/2\Z$.
When  $L$  is odd, $l$  is endowed with the bilinear form
\begin{equation}
b: l \times l \to  \left(\frac12\Z\right) /\Z, \quad (x,  y) \mapsto (x. y)/2.
\end{equation}
The alternating part  $l^{alt} : = \{x \,|\, b(x, x) = 0\}$  of  $l$  is a subspace of codimension one, and carries 
\begin{equation}
q: l^{alt} \to \Z/2\Z, \quad x \mapsto (x^2)/2
\end{equation}
which is a {\it quadratic refinement} of  $b$, that is,
$q(x+y)-q(x)-q(y)=b(x,y)$  holds for every  $x, y \in l^{alt}$.

We need an Enriques counterpart of the isomorphism \eqref{disc isom}.
The $K3$ lattice  $\Lambda$  decomposes in two parts by the action of free involution  $\varepsilon$.
The invariant part is the Enriques lattice $\Lambda_+$ of type $T_{2,3,7}$ scaled by 2.
The anti-invariant part $\Lambda_-$, called the anti-Enriques lattice, is isomorphic to  $U + U(2) + E_8(2)$.
Since  $\Lambda$  and  $\Lambda_+$  are unimodular and  since $\Lambda$  contains the orthogonal direct sum $\Lambda_+(2) + \Lambda_-$, we have the isomorphism
\begin{equation}\label{Enriques counterpart}
\Lambda_+/2\Lambda_+ \simeq {\rm Disc}\, \Lambda_-
\end{equation}
of 10-dimensional quadratic spaces over  $\F_2$.

Returning to the action  $G=G_{(n)} \act N_{\pm}$, we put $L_\pm:= N_{\pm, G}$ and denote its modulo 2 reduction by  $l_\pm$.
Restricting the isomorphism \eqref{disc isom}  to  $l_+$ we have

\begin{lem}
Two $9$-dimensional quadratic spaces $(l_+, q_+)$ and $(l_-^{alt}, q_-)$  over  $\F_2$  are isomorphic to each other including their $G$-actions.
\end{lem}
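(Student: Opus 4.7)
The plan is to derive the required $G$-equivariant isometry by restricting the $G$-equivariant discriminant isomorphism \eqref{disc isom} to a matching pair of index-$2$ subgroups. First, I will show that for a sublattice $L \subset N_\pm$ lying inside $N_\pm^* = D_{12}$, the assignment $\bar\ell \mapsto \overline{\ell/2}$ defines an injection $L/2L \hookrightarrow (1/2)N_\pm^*/N_\pm = {\rm Disc}(N_\pm(2))$, and a short calculation gives $q_{N_\pm(2)}(\overline{\ell/2}) = (\ell,\ell)/2 \bmod 2\Z$. Since $N_+^G$ contains the coordinate vector $e_\star$ (forcing all coordinates of $L_+$ to be integers) and the half-sum $v_+$ (forcing the coordinate sum to vanish), we have $L_+ \subset D_{12} = N_+^*$; this produces the isometric $G$-equivariant embedding $(l_+, q_+) \hookrightarrow {\rm Disc}(N_+(2))$ as an index-$2$ subgroup. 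For $L_-$ (odd), the same map fails on half-integer vectors, but the even sublattice $L_-^{\rm alt}$ of index $2$ turns out to be precisely the integer part of $L_-$ (a short computation shows every half-integer element of $L_-$ has odd self-intersection), hence is contained in $D_{12}=N_-^*$; this gives the companion embedding $(l_-^{\rm alt},q_-) \hookrightarrow {\rm Disc}(N_-(2))$ as another index-$2$ subgroup.

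The crux will be showing that under the $G$-equivariant isomorphism $\phi\colon {\rm Disc}(N_+(2)) \xrightarrow{\sim} {\rm Disc}(N_-(2))$ from \eqref{disc isom}, the subgroup $l_+$ maps onto $l_-^{\rm alt}$. The map $\phi$ is realised by the gluing rule $\phi(\bar x) = \bar y$ iff $x+y$ lies in the (rescaled) Niemeier lattice $N$. Using the code-theoretic description of $N$ via the extended binary Golay code $\mathcal G_{24}$ on $\Omega = \Omega_+ \sqcup \Omega_-$, this condition reduces to a code condition: for $\ell \in L_+$ with reduction $\bar\ell \in \F_2^{\Omega_+}$, the image $\phi(\overline{\ell/2})$ is represented by $m/2$ for any $m$ such that $(\ell \bmod 2,\, m \bmod 2) \in \mathcal G_{24}$. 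I will verify that such an $m$ can always be chosen in $L_-^{\rm alt}$, exploiting $G$-equivariance (automatic because $G \subset M_{11} \subset M_{12}$ preserves $\Omega_\pm$) and the fact that $\Omega_\pm$ are umbral dodecads — a combinatorial property controlling how $\mathcal G_{24}$ couples the two sides and enforcing the parity switch between the even lattice $L_+$ and the odd lattice $L_-$.

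Finally, because $N_\pm(2)^* = (1/2)D_{12}$ and $D_{12}$ is even, both discriminant quadratic forms $q_{N_\pm(2)}$ take values in $\F_2 \subset \Q/2\Z$; consequently the anti-isometry hidden inside $\phi$ (the customary sign-flip for a unimodular gluing) coincides with a genuine isometry over $\F_2$, and the restriction of $\phi$ to the matched index-$2$ subgroups delivers the required $G$-equivariant isomorphism $(l_+, q_+) \simeq (l_-^{\rm alt}, q_-)$ of $9$-dimensional quadratic $\F_2$-spaces. The main obstacle is the subgroup identification in the middle step: the ambient $10$-dimensional discriminants are $G$-equivariantly isomorphic almost by construction, but one must pin down precisely which index-$2$ subgroup of ${\rm Disc}(N_-(2))$ is hit by $l_+$. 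It is there that the Golay-code combinatorics of the umbral dodecads, together with the parity contrast ``$L_+$ even, $L_-$ odd,'' plays the essential role in singling out $l_-^{\rm alt}$ rather than another hyperplane as the correct target.
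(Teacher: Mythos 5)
Your overall route is the same as the paper's: the paper derives this lemma by restricting the $M_{12}$-equivariant gluing isomorphism \eqref{disc isom} coming from the unimodular overlattice $N \supset N_+(2)\oplus N_-(2)$, and in fact states the lemma with no further argument. Your preparatory steps are correct and supply detail the paper omits: since $G\subset M_{11}$ fixes $\star$ and the half-sum vectors, indeed $L_+\subset D_{12}=N_+^*$, the map $\bar\ell\mapsto\overline{\ell/2}$ is injective by primitivity of $L_+$ in $N_+$, and it matches $q_+$ with the discriminant form of $N_+(2)$; likewise every half-integer vector of $L_-$ has odd square, so the even part of $L_-$ is exactly $L_-\cap\Z^{\Omega_-}\subset D_{12}$ and its reduction is $l_-^{alt}$; and since both discriminant forms take values in $\F_2\subset\Q/2\Z$, the sign in the gluing anti-isometry is invisible. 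All of this is sound.

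The gap is the step you yourself flag as the crux: you never actually show that $\phi$ carries the index-two subgroup $l_+$ onto $l_-^{alt}$ rather than onto one of the other index-two $G$-stable subgroups of ${\rm Disc}(N_-(2))$. ``I will verify that such an $m$ can always be chosen in $L_-^{alt}$'' is a promissory note, and that verification is the entire content of the lemma --- it is precisely what the paper's word ``restricting'' conceals. A way to close it that avoids checking all $\ell\in L_+$: the discriminant bilinear form $b$ is nondegenerate on the $10$-dimensional group ${\rm Disc}(N_\pm(2))$, and $l_+$ (resp.\ $l_-^{alt}$) is by construction $b$-orthogonal to the nonzero image $I_\pm$ of $(N_\pm^G\otimes\Q)\cap N_\pm(2)^*$, hence equals $I_\pm^{\perp}$ with $\dim I_\pm=1$. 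Since $\phi$ preserves $b$ and is $G$-equivariant, $\phi(l_+)=l_-^{alt}$ reduces to the single identity $\phi(I_+)=I_-$, i.e.\ to exhibiting one Golay codeword that is a union of $G$-orbits meeting each of $\Omega_\pm$ in an even set (for $G=\mathfrak A_6$ this is automatic because ${\rm Disc}(N_-(2))^G$ is one-dimensional, $\mathfrak A_6$ having no index-two subgroup; for $\mathfrak S_5$ and $N_{72}$ one must still point at an explicit octad such as the union of the two short orbits). Without this identification, or your proposed Golay-code computation carried out in full, the proof is incomplete.
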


\begin{rmk}
The bilinear form on $(l_+, q_+)$ has 1-dimensional radical, and $q_+$  takes value 1 at the nonzero element in the radical.
\end{rmk}

The lattice $N_G$, the orthogonal complement of $N^G \subset N$, is obtained by patching two lattices $L_{\pm}$  by the isomorphism in the lemma.
$N_G$  is an even lattice {\it of Leech type}, that is, the induced action of $G$  on the discriminant group  ${\rm Disc}\, N_G$  is trivial and $N_G$  does not have a $(-2)$ element.

As is observed in the introduction  $L_+$  have a primitive embedding  into the lattice of type  $T_{2,3,7}$.
The following is the counterpart of  $L_-$.

\begin{prop}\label{L_-}
The lattice $L_-(2)$  has a primitive embedding into the anti-Enriques lattice  $\Lambda_-$.
\end{prop}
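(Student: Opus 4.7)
The strategy is to apply Nikulin's general theory of primitive embeddings of even lattices. The lattice $L_-(2)$ has signature $(0,10)$ and is even, while $\Lambda_- \simeq U+U(2)+E_8(2)$ has signature $(2,10)$; hence the orthogonal complement of any primitive embedding is an even positive-definite lattice of rank $2$. By Nikulin's criterion, a primitive embedding $L_-(2) \hookrightarrow \Lambda_-$ exists if and only if there is an even positive-definite rank-$2$ lattice $T$ together with an isotropic subgroup $H \subset {\rm Disc}\,L_-(2) \oplus {\rm Disc}\,T(-1)$ whose orthogonal quotient $H^\perp/H$ is isometric to ${\rm Disc}\,\Lambda_-$.

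I would proceed in three steps. First, compute ${\rm Disc}\,L_-(2)$ using the description of $L_-$ as an index-$2$ overlattice of $A_5\oplus A_5$ (when $n = 9, 10$) or $A_1 \oplus A_9$ (when $n = 6$); scaling the form by $2$ enlarges the discriminant group by adjoining the mod-$2$ reduction $l_- = L_-/2L_-$ equipped with its induced bilinear pairing, together with the quadratic refinement on the alternating part $l_-^{\rm alt}$. Second, observe that ${\rm Disc}\,\Lambda_-$ coincides with ${\rm Disc}(U(2)+E_8(2))$ because $U$ is unimodular, and is therefore a nondegenerate quadratic $\mathbb F_2$-space of dimension $10$ with a standard structure. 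Third, exhibit an appropriate pair $(T, H)$: the isomorphism $(l_+, q_+) \simeq (l_-^{\rm alt}, q_-)$ from the preceding lemma, combined with the Enriques counterpart \eqref{Enriques counterpart}, identifies precisely the subgroup of ${\rm Disc}\,L_-(2)$ that should glue to ${\rm Disc}\,T$, and thereby dictates the choice of $T$.

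The main obstacle lies in the detailed bookkeeping of quadratic forms on these $\mathbb F_2$-spaces, in particular tracking how the one-dimensional radical of the bilinear form on $l_+$ (where $q_+$ takes the nonzero value, per the remark following the preceding lemma) is transported through the gluing isomorphism. A more conceptual alternative would be to upgrade the primitive embedding $N_G \hookrightarrow \Lambda$ of Mukai's K3 construction \cite{mukai98} to a $C_2$-equivariant one, where $C_2$ acts on $N$ by exchanging the two dodecad labels and on $\Lambda$ as an Enriques involution. Restricting to the anti-invariant parts then yields the desired primitive embedding directly, and the needed $C_2$-equivariance is encoded precisely by compatibility of the discriminant-form isomorphisms \eqref{disc isom} and \eqref{Enriques counterpart} with the already-established embedding $L_+ \hookrightarrow T_{2,3,7}$ from the introduction.
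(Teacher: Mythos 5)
Your proposal is a strategy outline rather than a proof: the decisive step in the Nikulin approach --- actually computing ${\rm Disc}\,L_-(2)$, identifying ${\rm Disc}\,\Lambda_-$ as a quadratic $\F_2$-space, and exhibiting a concrete pair $(T,H)$ satisfying the gluing criterion --- is never carried out. You explicitly defer it as ``the main obstacle,'' but that bookkeeping \emph{is} the content of the proposition. Note in particular that $L_-$ is an \emph{odd} lattice of odd determinant ($9$ for $n=9,10$ and $5$ for $n=6$), so ${\rm Disc}\,L_-(2)$ mixes a $2$-group of order $2^{10}$ coming from the scaling with an odd part, while $|{\rm Disc}\,\Lambda_-|=2^{10}$; the counting $|H|^2=|{\rm Disc}\,T|\cdot|\det L_-|$ already forces a nontrivial odd gluing that you have not addressed. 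Your ``more conceptual alternative'' is circular in context: the appendix \emph{constructs} the $C_2$-equivariant embedding $N_G\hookrightarrow\Lambda$ by patching $L_+(2)\hookrightarrow\Lambda_+(2)$ with $L_-(2)\hookrightarrow\Lambda_-$, so Proposition~\ref{L_-} is an input to that equivariance, not a consequence of it.

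The paper's own proof avoids discriminant-form machinery entirely. It first embeds $L_-$ primitively into the odd unimodular lattice $I_{2,10}=\langle1\rangle^2+\langle-1\rangle^{10}$ by writing down explicit root vectors: for $n=6$ one takes the $A_9+A_1$ spanned by $e_i-e_{i+1}$ and $v=2(h_1+h_2)-\sum_1^{10}e_i$ and checks that its primitive hull is $L_-$; for $n=9,10$ one realizes $L_-$ as the orthogonal complement of two explicit vectors, containing $A_5+A_5$ with index $2$. It then uses the fact that $\Lambda_-$ is obtained from $I_{2,10}(2)$ by adjoining the single element $(h_1+h_2-\sum_1^{10}e_i)/2$, and checks that this element is not in the mod-$2$ reduction of the image of $L_-$, so primitivity is preserved. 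If you want to salvage your route, you must either complete the Nikulin computation in full, or supply the explicit vectors as the paper does.
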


The essential part is this.
\begin{lem}
$L_-$  has a primitive embedding into the odd unimodular lattice $I_{2,10} := \langle1\rangle^2 + \langle-1\rangle^{10}$ of signature  $(2, 10)$.
\end{lem}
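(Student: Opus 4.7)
The plan is to exhibit a positive definite rank-two lattice $M$ whose discriminant bilinear form is the negative of that of $L_-$, and then assemble $L_- \oplus M$ into a unimodular lattice of signature $(2,10)$ by gluing along the resulting anti-isometry; by Milnor's classification of indefinite unimodular forms the result is determined once its parity is known, and here it will be the odd lattice $I_{2,10}$.

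First I would compute the discriminant form of $L_-$ using its index-two description. Writing $L_- = (A_5 \oplus A_5) + \Z h$ (for $n = 9, 10$) or $L_- = (A_1 \oplus A_9) + \Z h$ (for $n = 6$), the integrality condition $(h, h) \in \Z$ together with the requirement that $2h$ lie in the root lattice selects a unique nontrivial glue class: $h = 3\omega^{(1)} + 3\omega^{(2)}$ with $(h,h) = -15$ in the first case, and $h = \omega^{(A_1)} + 5\omega^{(A_9)}$ with $(h,h) = -23$ in the second. In particular $L_-$ contains a vector of odd square, so $L_-$ is odd. Computing $\{h\}^\perp/\Z h$ inside the discriminant group $(\Z/6)^{\oplus 2}$ (resp.\ $\Z/2 \oplus \Z/10$) then yields $L_-^*/L_- \cong (\Z/3)^{\oplus 2}$ with bilinear form $\diag(2/3,\,2/3)$ in the first case, and $L_-^*/L_- \cong \Z/5$ with bilinear form $2/5$ on a generator in the second.

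The matching complement is immediate: $M = \langle 3\rangle \oplus \langle 3\rangle$ has discriminant form $\diag(1/3,\,1/3) \equiv -\diag(2/3,\,2/3) \pmod{\Z}$, while the positive definite form with Gram matrix $\bigl(\begin{smallmatrix} 2 & 1 \\ 1 & 3 \end{smallmatrix}\bigr)$ has discriminant form $3/5 \equiv -2/5 \pmod{\Z}$. In each case the standard gluing construction (as in Nikulin) produces a rank-$12$ unimodular lattice of signature $(2,10)$ containing $L_-$ and $M$ as orthogonal primitive sublattices. Since $L_-$ is odd, so is the overlattice, and Milnor's classification then identifies it with $I_{2,10}$, giving the desired primitive embedding.

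The main obstacle is the first step. Pinning down the discriminant form of $L_-$ requires care precisely because $L_-$ is an odd overlattice of an even root lattice: the admissible glues are governed by the condition $(h,h) \in \Z$ rather than $\in 2\Z$, and one must trace through the orthogonality computation $H^\perp/H$ inside the full discriminant group of $A_5 \oplus A_5$ (or $A_1 \oplus A_9$) to see that the quotient is cyclic of order $9$ or $5$ as needed. Once this is done, exhibiting $M$ and invoking the gluing formalism are routine, and oddness of the glued lattice is automatic from the presence of $h$.
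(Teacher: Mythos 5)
Your proposal is correct, but it proves the lemma by a genuinely different route from the paper. The paper's proof is purely explicit: it fixes an orthogonal basis $h_1,h_2,e_1,\dots,e_{10}$ of $I_{2,10}$ and writes down, for $n=6$, a root system of type $A_9+A_1$ (the vectors $e_i-e_{i+1}$ and $2(h_1+h_2)-\sum e_i$) whose primitive hull is $L_-$, and, for $n=9,10$, two explicit vectors whose orthogonal complement contains $A_5+A_5$ with index two and is checked to be $L_-$. You instead compute the discriminant bilinear form of $L_-$ from its index-two glue over $A_5\oplus A_5$ (resp.\ $A_1\oplus A_9$) and glue $L_-$ to a positive definite rank-two complement along an anti-isometry, then invoke the classification of odd indefinite unimodular lattices. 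Your numerical data all check out: the unique integral glue classes are $3\omega^{(1)}+3\omega^{(2)}$ with square $-15$ and $\omega^{(A_1)}+5\omega^{(A_9)}$ with square $-23$, the discriminant forms are $\diag(2/3,2/3)$ on $(\Z/3)^2$ and $2/5$ on $\Z/5$, and the complements $\langle3\rangle\oplus\langle3\rangle$ and $\bigl(\begin{smallmatrix}2&1\\1&3\end{smallmatrix}\bigr)$ match; oddness of the glued lattice follows from the odd vector $h$, and primitivity of $L_-$ is automatic from the graph construction. What your approach buys is brevity and systematicity (no need to guess vectors); what the paper's approach buys is an explicit embedding, which is actually used immediately afterwards in the proof of Proposition \ref{L_-}, where one must check that $h_1+h_2-\sum_1^{10}e_i$ does not lie in the modulo-$2$ reduction of the image of $L_-$ --- with your abstract embedding that subsequent step would require a separate (though doable) discriminant-form argument.
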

\proof
We take  $\{h_1, h_2, e_1, \ldots, e_{10}\}$  with  $(h_1^2)= (h_2^2)= 1$ and $(e_i^2)= -1$ $(1 \le i \le 10)$ as an orthogonal basis of  $I_{2,10}$.

In the case  $n=6$,  $L_-$  is the unique (odd) integral lattice containing  $A_1+A_9$ as a sublattice of index 2.
$e_i-e_{i+1}$ ($i=1, \ldots, 9$) and $v=2(h_1+h_2)-\sum_1^{10}e_i$  generate a root sublattice of type  $A_9+A_1$ in $I_{2,10}$.
Since  the half sum of $e_{2j-1}-e_{2j} \in A_9$ ($j=1, \ldots, 5$)  and $v$ belongs to  $I_{2,10}$, the primitive hull of $A_9+A_1 \subset I_{2,10}$  is isomorphic to  $L_-$.

In the case $n=9, 10$, $L_-$  is the unique integral lattice containing  $A_5+A_5$ as a sublattice of index 2.
$L_-$  is isomorphic to the orthogonal complement of $2(h_1+h_2) - e_1 -e_2-e_3-e_4-e_5$ and $2(h_1-h_2) - e_6 -e_7-e_8-e_9-e_{10}$ in  $I_{2,10}$.
In fact, the orthogonal complement is generated by
\begin{gather}
e_1-e_2, e_2-e_3, e_3-e_4, e_4-e_5, h_1+h_2 - e_1 -e_2-e_3-e_4; \notag\\
e_6-e_7, e_7-e_8, e_8-e_9, e_9-e_{10}, h_1-h_2 - e_6 -e_7-e_8-e_9 \notag
\end{gather}
which form a root lattice  $R$ of type $A_5+A_5$, and $h_1-e_2-e_4-e_6-e_8 \not\in R$.
\qed

\begin{rmk}
In the above proof we make use of the fact that the blow-up of the projective space $\P^3$ at five points has a Cremona symmetry of type  $A_5$,
which is described by Dolgachev\cite{Do} in terms of root systems.
\end{rmk}

\medskip
{\it Proof of Proposition~$\ref{L_-}$.}
The anti-Enriques lattice  $\Lambda_-$  is obtained from  $I_{2,10}(2)$  by adding $(-2)$-element  $(h_1+h_2 - \sum_1^{10}e_i)/2$.
Since  $h_1+h_2 - \sum_1^{10}e_i$  does not belong to the modulo 2 reduction of  the image of  $L_- \hookrightarrow I_{2,10}$ constructed in the lemma, 
the induced embedding $L_-(2) \hookrightarrow \Lambda_-$  is also primitive.
\qed

\begin{rmk}
The above relation between the anti-Enriques lattice $\Lambda_-$ and $I_{2,10}$  is observed in Allcock\cite{A00}.
\end{rmk}

Patching together two primitive embeddings
$L_+(2) \hookrightarrow \Lambda_+(2)$, determined by $A_9,  A_4+A_5, A_1+A_8 \subset T_{2,3,7}$,
and $L_-(2) \hookrightarrow \Lambda_-$, we have the following.

\begin{prop}
There exist s a primitive embedding of  $N_G$  into the $K3$ lattice $\Lambda$  such that $N_G \cap \Lambda_+(2) = L_+(2)$ and $L \cap \Lambda_- = L_-(2)$.
\end{prop}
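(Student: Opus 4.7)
The plan is to build the embedding $N_G \hookrightarrow \Lambda$ by lifting the orthogonal direct sum of the two primitive embeddings
\[ L_+(2) \oplus L_-(2) \hookrightarrow \Lambda_+(2) \oplus \Lambda_- \]
just constructed to an embedding of appropriate overlattices on both sides. On the target side, $\Lambda$ is the overlattice of $\Lambda_+(2) \oplus \Lambda_-$ of index $2^{10}$ obtained by adjoining the graph of the isometry \eqref{Enriques counterpart} between $\Lambda_+/2\Lambda_+$ and ${\rm Disc}\,\Lambda_-$. On the source side, $N_G$ is the overlattice of $L_+(2) \oplus L_-(2)$ obtained by adjoining the graph of the $G$-equivariant isometry $(l_+, q_+) \simeq (l_-^{alt}, q_-)$ inherited from \eqref{disc isom}.

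Thus the whole problem reduces to showing that, after possibly precomposing the primitive embedding $L_-(2) \hookrightarrow \Lambda_-$ with a suitable element of ${\rm O}(\Lambda_-)$, the $N_G$-glue system is contained in the $\Lambda$-glue system. Concretely, the embeddings $L_\pm(2) \hookrightarrow \Lambda_\pm$ induce an inclusion $l_+ \hookrightarrow \Lambda_+/2\Lambda_+$ of $\F_2$-quadratic spaces and a corresponding isometric map $l_-^{alt} \to {\rm Disc}\,\Lambda_-$; one must arrange that the restriction of \eqref{Enriques counterpart} to the image of $l_+$ lands inside the image of $l_-^{alt}$ and coincides there with the glue isomorphism defining $N_G$. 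Both restricted spaces are non-degenerate of dimension $9$ sitting inside non-degenerate $10$-dimensional ones, so Witt's extension theorem for quadratic forms over $\F_2$ yields the required automorphism of $\Lambda_-$ once the Arf invariants are checked to agree. Once the glueings match, the induced embedding $N_G \hookrightarrow \Lambda$ is well-defined and automatically primitive, and the intersection identities $N_G \cap \Lambda_+(2) = L_+(2)$ and $N_G \cap \Lambda_- = L_-(2)$ follow because the nontrivial glue vectors project nontrivially onto both summands.

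The main obstacle will be the Witt-style matching of the two discriminant isomorphisms. Abstractly, the bare existence of a primitive embedding $N_G \hookrightarrow \Lambda$ could also be deduced from Nikulin's existence criterion, but what is really needed here is an embedding \emph{compatible} with the already-fixed embeddings of $L_+(2)$ into $\Lambda_+(2)$ and of $L_-(2)$ into $\Lambda_-$. Verifying that the $\F_2$-quadratic-form isomorphisms \eqref{disc isom} and \eqref{Enriques counterpart} induce compatible structures on the restricted $9$-dimensional subspaces requires a direct computation of Arf/discriminant invariants, and this is the delicate point in the argument; the $G$-equivariance throughout is automatic once the set-theoretic matching is in place, since $G$ acts trivially on $\Lambda_\pm$.
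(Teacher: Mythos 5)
Your overall strategy coincides with the paper's: the proposition is obtained by patching the primitive embedding $L_+(2)\hookrightarrow\Lambda_+(2)$ determined by $A_9,\ A_4+A_5,\ A_1+A_8\subset T_{2,3,7}$ with the primitive embedding $L_-(2)\hookrightarrow\Lambda_-$ of Proposition~\ref{L_-}, and the whole content is that the glue defining $N_G$ over $L_+(2)\oplus L_-(2)$ (the graph of the isomorphism $(l_+,q_+)\simeq(l_-^{alt},q_-)$ coming from \eqref{disc isom}) can be made to land inside the glue defining $\Lambda$ over $\Lambda_+(2)\oplus\Lambda_-$ (the graph of \eqref{Enriques counterpart}). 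The paper compresses this into the single sentence ``patching together,'' and you have correctly made the implicit compatibility requirement explicit, including the observation that the intersection identities follow because the glue group is a graph.

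The gap is in the one concrete mechanism you offer for that compatibility. You assert that the two restricted $9$-dimensional spaces are \emph{non-degenerate} inside non-degenerate $10$-dimensional spaces and then invoke Witt extension subject to an Arf-invariant check. But a $9$-dimensional space over $\F_2$ cannot carry a non-degenerate alternating form; as the Remark following the key Lemma records, the bilinear form on $(l_+,q_+)$ has a $1$-dimensional radical on which $q_+$ takes the value $1$, and the same holds for $(l_-^{alt},q_-)$. For such odd-dimensional (defective) quadratic spaces the Arf invariant is not the relevant invariant --- one checks directly that $H\perp\langle 1\rangle$ and $A\perp\langle 1\rangle$ are isometric, so the abstract isomorphism type is automatic --- and characteristic $2$ is exactly the setting in which Witt extension for subspaces meeting the radical nontrivially requires care. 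What you actually need is the transitivity of ${\rm O}(V)$, for $V=\Lambda_+/2\Lambda_+\simeq{\rm Disc}\,\Lambda_-$ of dimension $10$, on hyperplanes of the form $v^\perp$ with $q(v)=1$ together with the surjectivity of $\mathrm{Stab}(v)\to{\rm O}(v^\perp)$; this is true (both groups being $Sp_8(\F_2)$) but is a different statement from the one you cite and needs its own short argument. Finally, to realize the resulting automorphism of ${\rm Disc}\,\Lambda_-$ by an element of ${\rm O}(\Lambda_-)$ you must also invoke the surjectivity of ${\rm O}(\Lambda_-)\to{\rm O}({\rm Disc}\,\Lambda_-)$ for $\Lambda_-=U+U(2)+E_8(2)$, which holds by Nikulin's criterion but should be stated. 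With these repairs your argument closes and agrees with what the paper leaves implicit.
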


\medskip
{\it Proof of Theorem~$\ref{veryMathieu}$.}
Let  $G$ be one of the three groups $G_{(6)}, G_{(9)}, G_{(10)}$ (or equivalently $\mathfrak S_5, N_{72}, \mathfrak A_6)$.
Since  $N_G$  is of Leech type the action $G$ on $N_G$  extends to that on the $K3$ lattice.
By our construction it preserves $\Lambda_+(2)$  and  $\Lambda_-$.
Since $L_-(2)$ does not contain a $(-2)$-element, there exists an Enriques surface  $S_{(n)}$ ($n=6,9,10$)  such that  $H^{1,1}(S_{(n)}, \Z)^- \simeq L_-(2)$ by the subjectivity theorem (\cite{BPV}).
Let  $h \in H^2(S_{(n)}, \Z)_f$  be a primitive element perpendicular to  $L_+$.
$h$  is unique up to sign.
Replacing with $-h$ if necessary, we may assume that  $h$ belongs to the positive cone, that is, the connected component of  
$\{ x \in H^2(S, \R) \,|\, (x^2) > 0\}$
which contains ample classes.
There exists a composition $w \in {\rm O}(H^2(S_{(n)}, \Z)_f)$  of 
reflections with respect to smooth rational curves on  $S_{(n)}$  such that $w(h)$  is nef.
By the strong Torelli type theorem (\cite{BP}), the cohomological action of  $G \act H^2(S_{(n)}, \Z)_f$  twisted by  $w$ is realized by an algebraic action.
\qed

\begin{rmk}\label{N72&A6}
By construction and by Lemma~\ref{embM11(2)} and the Torelli type theorem,  two Enriques surfaces $S_{(9)}$  and  $S_{(10)}$  are isomorphic to each other.
The Enriques surface  $S_{(6)}$  is $\mathfrak S_5$-equivariantly isomorphic to that of  type VII in Kondo\cite{kondo86}.
In particular, ${\rm Aut}\, S_{(6)}$ is the symmetric group  $\mathfrak S_5$.
\end{rmk}

\begin{rmk}
A Mathieu action of  $G=C_2 \times \mathfrak A_4$ on an Enriques suface can be constructed lattice theoretically also.
Via the embedding $\mathfrak S_6 \hookrightarrow M_{12}$, $G$  is embedded into $M_{12}$ and decomposes $\Omega_\pm$  into three orbits of length 2, 4 and 6.
Hence the lattice $N_{\pm,G}$  contains the root lattice of type $A_1 + A_3+A_5$ as a sublattice of index two.
$N_{+,G}$ has a primitive embedding into the Enriques lattice  $\Lambda_+$ since $\Lambda_+$ contains the lattice  $T_{2,4,6}$  as a sublattice of index 2.
$N_{-,G}$ has a primitive embedding into  $I_{2,10}$  since  $N_{-,G} \simeq N_{+,G}$  and since $I_{2,10} \simeq \Lambda_+ + \langle1\rangle + \langle-1\rangle$.
Hence the same argument shows the existence of (a 1-dimensional family of) Enriques surfaces with Mathieu actions of $G$.
\end{rmk}

\section{K3 surface constructed in \cite{KOZ1, KOZ2}}\label{KOZconstruction}

In \cite{KOZ1, KOZ2} Keum, Oguiso and Zhang constructed a $K3$ surface with
an action by a group $\mathfrak{A}_6\splitext C_4$ and  
determined the abstract structure of the group. Here we show that it
contains a fixed point free involution $\varepsilon$ and the action by $\mathfrak{A}_6$
descends to $S=X/\varepsilon$.
Hence this gives another lattice theoretic construction of $\mathfrak{A}_6$-action.

We start with recalling their results.
\begin{thm}\label{KOZthm}
There exists a $K3$ surface $X$ with the following properties.
\begin{(enumerate)}
\item $X$ is a smooth $K3$ surface with Picard number $\rho =20$ and the transcendental lattice $T_X$ is given by the 
Gram matrix $\begin{pmatrix} 6 & 0 \\ 0 & 6 \end{pmatrix}$. 
\item $X$ is acted on by a group $\widetilde{\mathfrak{A}_6}=\mathfrak{A}_6\splitext C_4$. Here 
$\mathfrak{A}_6$ is the subgroup of symplectic automorphisms and satisfies 
$NS(X)^{\mathfrak{A}_6}=\mathbb{Z}H$, $(H^2)=20$. 
\item The image of the natural homomorphism $c\colon \widetilde{\mathfrak{A}_6}\rightarrow 
\mathrm{Aut}(\mathfrak{A}_6)$ is $M_{10}$.
\end{(enumerate)}
\end{thm}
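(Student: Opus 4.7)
The plan is lattice-theoretic, with Torelli-type arguments bridging from lattices to geometry. For parts (1) and (2), Mukai's classification \cite{mukai88} places $\mathfrak{A}_6$ among the eleven maximal symplectic group actions on K3 surfaces; the coinvariant lattice $\Lambda_{\mathfrak{A}_6} \subset \Lambda$ is uniquely determined up to isometry, and may be realized by descent from the Niemeier lattice of type $A_1^{24}$ as in Appendix~\ref{ltc}. A K3 surface $X$ admitting a symplectic $\mathfrak{A}_6$-action of maximal Picard number has $NS(X)^{\mathfrak{A}_6} = \Z H$ of rank one, so $NS(X)$ is the saturation of $\Z H \oplus \Lambda_{\mathfrak{A}_6}$ inside $\Lambda$. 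A direct lattice computation starting from the Niemeier embedding yields $H^2 = 20$ and $T_X \simeq \bigl(\begin{smallmatrix} 6 & 0 \\ 0 & 6 \end{smallmatrix}\bigr)$. Surjectivity of the period map produces such $X$, and strong Torelli (after a Weyl adjustment putting $H$ in the ample cone) realizes the $\mathfrak{A}_6$-action geometrically.

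To produce the extension $\widetilde{\mathfrak{A}_6} = \mathfrak{A}_6 \splitext C_4$, let $N = N_{\Aut(X)}(\mathfrak{A}_6)$. Since $\mathfrak{A}_6$ is symplectic it acts trivially on $T_X$, so $N/\mathfrak{A}_6$ acts faithfully on $T_X$ by Hodge isometries. The Hodge isometry group of $\bigl(\begin{smallmatrix} 6 & 0 \\ 0 & 6 \end{smallmatrix}\bigr)$, with period (up to scalar) $(1,\sqrt{-1})$, is cyclic of order four, generated by $J = \bigl(\begin{smallmatrix} 0 & -1 \\ 1 & 0 \end{smallmatrix}\bigr)$. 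One extends $J$ to a global isometry of $\Lambda$ that fixes $H$ and normalizes the $\mathfrak{A}_6$-action on $\Lambda_{\mathfrak{A}_6}$; existence of such an extension reduces to a compatibility of discriminant forms of $T_X$, $\Z H$ and $\Lambda_{\mathfrak{A}_6}$, verifiable by explicit computation. Strong Torelli then realizes the extended action geometrically, giving $N/\mathfrak{A}_6 = C_4$.

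For the identification $c(\widetilde{\mathfrak{A}_6}) = M_{10}$ in part (3), note first that $\ker c$ is the centralizer of $\mathfrak{A}_6$ in $\widetilde{\mathfrak{A}_6}$ (since $\mathfrak{A}_6$ is centerless). The unique $C_2 \subset C_4$ acts trivially on $\mathfrak{A}_6$ (its image in $\Out(\mathfrak{A}_6) \simeq C_2 \times C_2$ is the square of an order-$2$ outer automorphism, hence trivial), while a generator of $C_4$ itself acts as a nontrivial order-$2$ outer automorphism. Hence $|\ker c|=2$ and $|c(\widetilde{\mathfrak{A}_6})| = 720$, placing $c(\widetilde{\mathfrak{A}_6})$ among the three index-two extensions $\mathfrak{S}_6$, $M_{10}$, $\mathrm{PGL}_2(\F_9)$ in $\Aut(\mathfrak{A}_6)$, whose Sylow $2$-subgroups are respectively $C_2 \times D_8$, $SD_{16}$ (semidihedral) and $D_{16}$. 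The order-$4$ generator $\phi$ of $C_4$ combined with an appropriately chosen symplectic element $\sigma \in \mathfrak{A}_6$ of order $4$ produces an element of order $8$ in $\widetilde{\mathfrak{A}_6}$ (using that $\phi$ inverts $\sigma$ modulo the center), forcing the Sylow $2$-subgroup of $c(\widetilde{\mathfrak{A}_6})$ to be semidihedral, and hence $c(\widetilde{\mathfrak{A}_6}) = M_{10}$.

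The hardest step is the glueing argument for the $C_4$-extension: one must verify that $J$ extends to an isometry of the entire K3 lattice that commutes appropriately with $\mathfrak{A}_6$ on $\Lambda_{\mathfrak{A}_6}$ and fixes $H$, i.e., that the induced action of $J$ on $T_X^\vee/T_X$ matches a compatible action on $NS(X)^\vee/NS(X)$ through the glueing isomorphism of discriminant forms. This uses crucially the specific arithmetic of $T_X = \langle 6 \rangle \oplus \langle 6 \rangle$ (as opposed to other rank-two $\mathfrak{A}_6$-invariant forms); once the discriminant-form check is made, Torelli and the group-theoretic identification of $M_{10}$ proceed by standard arguments.
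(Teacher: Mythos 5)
First, a point of context: the paper does not prove Theorem~\ref{KOZthm} at all --- it is imported verbatim from \cite{KOZ1,KOZ2} (``We start with recalling their results''), and the only new content of the appendix is the subsequent lemma that $\varepsilon$ is fixed-point free. You are therefore reproving two papers' worth of material, and while your overall lattice-theoretic strategy is the right one (it is essentially what \cite{KOZ1,KOZ2} do), the execution of part (3) contains a genuine error. You assert that the unique $C_2\subset C_4$ acts trivially on $\mathfrak{A}_6$ and that a generator $\phi$ of $C_4$ acts as a nontrivial \emph{order-two outer} automorphism. If that were so, $c(\phi)$ would be an involution in $c(\widetilde{\mathfrak{A}_6})\setminus \mathrm{Inn}(\mathfrak{A}_6)$; but $M_{10}=\mathfrak{A}_6.2_3$ is precisely the one index-two overgroup of $\mathfrak{A}_6$ in $\Aut(\mathfrak{A}_6)$ whose outer coset contains \emph{no} involutions (every element of $M_{10}\setminus\mathfrak{A}_6$ has order $4$ or $8$; the extension is non-split). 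So the group structure you describe would force the image to be $\mathfrak{S}_6$ or $\mathrm{PGL}_2(\F_9)$, contradicting the conclusion you want; in the actual group $c(\phi)$ has order $4$ or $8$ and $c(\phi)^2$ is a nontrivial inner automorphism, i.e.\ $\phi^2\notin\ker c$. Moreover your final step --- ``an element of order $8$ forces the Sylow $2$-subgroup to be semidihedral'' --- is false: the Sylow $2$-subgroup $D_{16}$ of $\mathrm{PGL}_2(\F_9)$ also contains elements of order $8$, so this criterion only excludes $\mathfrak{S}_6$. Distinguishing $M_{10}$ from $\mathrm{PGL}_2(\F_9)$ requires a different invariant (e.g.\ the absence of outer involutions, or of elements of order $10$), and doing so is essentially the entire content of \cite{KOZ2}; it does not ``proceed by standard arguments.''

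Two smaller gaps: the step you yourself flag as hardest --- extending $J$ to an isometry of $\Lambda$ normalizing the $\mathfrak{A}_6$-action --- is only asserted, and it is not a pure discriminant-form check: one must exhibit an isometry of the rank-$19$ coinvariant lattice normalizing the image of $\mathfrak{A}_6$ there and inducing the prescribed (outer, order $\geq 4$) automorphism, which is why \cite{KOZ1} works inside the Leech lattice, where the normalizer of $\mathfrak{A}_6$ in the Conway group is available, rather than the $A_1^{24}$ Niemeier lattice you propose. And in parts (1)--(2), the values $(H^2)=20$ and $T_X\simeq\langle 6\rangle\oplus\langle 6\rangle$ are not ``yielded'' by the Niemeier computation: the rank-$3$ invariant lattice admits many splittings into a rank-$1$ and a rank-$2$ piece, and one must \emph{choose} $H$ so that its orthogonal complement in the invariant lattice carries an order-$4$ Hodge isometry; that choice is what produces these particular numbers.
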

We put $\varepsilon$ 
to be the nontrivial element in $\ker c$ (namely the element $(1,-1)\in M_{10}\times C_4$
in the notation of \cite[Theorem 2.3]{KOZ2}).
The corresponding automorphism on $X$ is denoted by the same letter.
\begin{lem}
The automorphism $\varepsilon$ is fixed point free.
\end{lem}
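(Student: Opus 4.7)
My plan is to prove that $\varepsilon$ is a free anti-symplectic involution on $X$ by identifying its invariant sublattice $L^+=H^2(X,\Z)^{\varepsilon^*}$ with the Enriques lattice $U(2)\oplus E_8(2)$ and then invoking Nikulin's classification of non-symplectic involutions on K3 surfaces, which characterizes free anti-symplectic involutions by exactly this invariant lattice.

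First, being the nontrivial element of $\ker c$, the involution $\varepsilon$ is central in $\widetilde{\mathfrak A_6}$ and lies outside the symplectic subgroup $\mathfrak A_6$, so $\varepsilon$ is anti-symplectic; in particular $\varepsilon^*|_{T_X}=-\id$, while $\varepsilon$ fixes the $\mathfrak A_6$-invariant ample class $H$. Next, by Mukai's theorem the character of the symplectic $\mathfrak A_6$-action on $H^2(X,\Q)$ depends only on element order, taking values $(22,6,4,2,2)$ on orders $(1,2,3,4,5)$. An inner-product computation against the character table of $\mathfrak A_6$ then yields the isotypic decomposition
\[
H^2(X,\Q)\cong 3\chi_1\oplus\chi_{5a}\oplus\chi_{5b}\oplus\chi_9,
\]
in which the three trivial summands realize $\Q H\oplus T_X\otimes\Q$. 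Because $\varepsilon$ is central, Schur's lemma forces $\varepsilon^*=\eta_\chi\,\id$ on each nontrivial isotypic summand with $\eta_\chi\in\{\pm1\}$. Moreover, any order-4 element $\sigma\in\widetilde{\mathfrak A_6}$ with $\sigma^2=\varepsilon$ realizes the nontrivial outer automorphism of $\mathfrak A_6$ through $c$ (using $\mathrm{Im}(c)=M_{10}$), which swaps the $\chi_{5a}$- and $\chi_{5b}$-isotypic components, so these two signs coincide.

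The main obstacle is pinning down the remaining two signs $\eta_{5a}=\eta_{5b}$ and $\eta_9$. The choice $\eta_{5a}=-1$, $\eta_9=+1$ yields $\mathrm{rk}\,L^+=1+9=10$ and Lefschetz number $L(\varepsilon)=0$, consistent with a free involution; the other three assignments yield ranks in $\{1,11,20\}$. I would rule those out by noting that the $\widetilde{\mathfrak A_6}$-invariant fixed locus $\Fix(\varepsilon)$ would have to carry a faithful $\mathfrak A_6$-action, but $\mathfrak A_6$ neither embeds in $\mathrm{PGL}_2(\C)$ (ruling out rational fixed components) nor acts faithfully on an elliptic curve (whose automorphism group is solvable), and positive-genus possibilities are incompatible with Nikulin's list of invariant lattices of non-free anti-symplectic involutions at the corresponding ranks. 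Once $L^+$ has rank $10$ and contains $\Z H$ with $(H^2)=20$, its discriminant form is determined by the unimodular gluing $L^+\oplus L^-\hookrightarrow H^2(X,\Z)$ together with the explicit $T_X=\langle 6\rangle\oplus\langle 6\rangle\subset L^-$; this discriminant matches that of $U(2)\oplus E_8(2)$, and Nikulin's theorem then yields that $\varepsilon$ is fixed-point free.
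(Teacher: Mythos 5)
Your strategy (decompose $H^2(X,\Q)$ into $\mathfrak A_6$-isotypic pieces, pin down the sign of $\varepsilon^*$ on each, and read off freeness from Nikulin's classification) is genuinely different from the paper's, and the computation $H^2(X,\Q)\cong 3\chi_1\oplus\chi_{5a}\oplus\chi_{5b}\oplus\chi_9$ together with the Schur-lemma and $M_{10}$ outer-automorphism observations are correct. But the decisive step --- eliminating the wrong sign assignments and then identifying $L^+$ --- is not actually carried out, and that is where all the content lies. Two concrete problems. First, the clause ``positive-genus possibilities are incompatible with Nikulin's list of invariant lattices of non-free anti-symplectic involutions at the corresponding ranks'' is not an argument: Nikulin's list does not forbid a positive-genus fixed curve at rank $10$ or $11$ (generically one gets $C_g\sqcup E_1\sqcup\cdots\sqcup E_k$ with $g=(22-r-a)/2$, which can reach $g=6$, beyond your $\mathrm{PGL}_2$ and elliptic-curve exclusions and even beyond the Hurwitz bound $84(g-1)$). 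What actually kills these cases is that $\mathfrak A_6$ must preserve each of the $k\le 5$ rational components individually (it has no nontrivial action on $\le 5$ points, and a symplectic element fixing a curve pointwise is trivial), forcing $\mathfrak A_6\hookrightarrow\mathrm{PGL}_2(\C)$; you never say this. Second, and more seriously, in the rank-$10$ case freeness is \emph{not} determined by the rank: Nikulin's criterion requires the full invariants $(r,a,\delta)=(10,10,0)$, i.e.\ $L^+\cong U(2)\oplus E_8(2)$, whereas rank $10$ is equally compatible with $U\oplus E_8(2)$ (two elliptic curves), $\langle 2\rangle\oplus\langle -2\rangle^{9}$ (one elliptic curve), or $U\oplus E_8$ (a genus-$6$ curve plus five rational curves). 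The asserted ``discriminant matches'' is precisely what needs proof and is nowhere computed; to close it you would again have to fall back on the geometric exclusions above, applied to every rank-$10$ entry of Nikulin's table.

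For comparison, the paper's proof avoids all of this machinery. If $\Fix(\varepsilon)\neq\emptyset$, it is a disjoint union of smooth curves whose total class $D$ is $\mathfrak A_6$-invariant, hence lies in $NS(X)^{\mathfrak A_6}=\Z H$ with $H$ ample; by the Hodge index theorem two disjoint components cannot both have positive self-intersection, so $D$ is a single smooth curve with $(D^2)\ge (H^2)=20$, i.e.\ of genus at least $11$ --- impossible, since a fixed curve of a non-symplectic involution has genus at most $10$. Your $\mathrm{PGL}_2$/elliptic-curve exclusions, pushed systematically through every case of Nikulin's table, would essentially reconstruct this conclusion by a much longer route; as written, the proposal has a genuine gap at its central step.
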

\begin{proof}
By the construction, $\varepsilon$ is a non-symplectic involution on $X$, hence 
its fixed locus is a disjoint union of smooth curves. Assume it is not empty. We look at
the divisor $D$ given by the sum of fixed curves. Since $\varepsilon$ commutes with 
$\mathfrak{A}_6$, $D$ belongs to 
the sublattice $NS(X)^{\mathfrak{A}_6}=\mathbb{Z}H$ by Theorem \ref{KOZthm} (2). 
Since $H$ is ample, $D$ is connected.
Then $(H^2)=20$ shows that the genus of $D$ is (at least) $11$, but there are no 
such fixed curves for non-symplectic involutions.
Thus $\varepsilon$ is free.
\end{proof}
Therefore, the Enriques surface $S=X/\varepsilon$ has an action by $\mathfrak{A}_6$
(or by $M_{10}$, more precisely).


\end{document}